\documentclass[opre,nonblindrev,copyedit]{informs2_TR}

\usepackage{appendix}
\usepackage{endnotes}



\usepackage{natbib}
 \bibpunct[, ]{(}{)}{,}{a}{}{,}%
\usepackage[bookmarks=true,pageanchor,colorlinks,linkcolor=blue,anchorcolor=blue,citecolor=black,urlcolor=blue]{hyperref}


\newcommand{\bx}{\mathbf{x}}

\usepackage{bookmark}
\usepackage{multirow}
\usepackage{color}

\long\def\old#1{}
\long\def\com#1{{\color{green}[#1]}}
\def\ds#1{\displaystyle{#1}}
\def\mg#1{{\color{magenta}#1}}

\TheoremsNumberedThrough     
\ECRepeatTheorems

\EquationsNumberedThrough    


\begin{document}


 \RUNAUTHOR{Tsitsiklis and Xu}

\RUNTITLE{Efficiency Loss in a Cournot Oligopoly with Convex Market
Demand}

\TITLE{Efficiency Loss in a Cournot Oligopoly with Convex Market
Demand
}

\ARTICLEAUTHORS{%
\AUTHOR{John N. Tsitsiklis and Yunjian Xu}
\AFF{Laboratory or Information and Decision Systems, MIT, Cambridge, MA, 02139, \EMAIL\{jnt@mit.edu, yunjian@mit.edu\}} 
} 

\ABSTRACT{We consider a Cournot oligopoly {model} where multiple
suppliers (oligopolists) compete by choosing quantities. We compare
the social welfare achieved at  a Cournot equilibrium to the maximum
possible, for the case where the {inverse} market demand function is
convex. We establish a lower bound on the efficiency of Cournot
equilibria in terms of a scalar parameter {derived} from the inverse
demand function, namely, the ratio of the slope of the inverse
demand function at the Cournot equilibrium to the {average slope of
the inverse demand function between} the Cournot equilibrium and {a}
social optimum.  {Also,  for the} case of a single, monopolistic,
profit maximizing supplier, or of multiple suppliers who collude to
maximize their total profit, we establish a {similar but tighter}
lower bound on the efficiency of the resulting output.
 Our results provide
nontrivial quantitative bounds on the loss of social welfare for
several convex inverse demand functions that {appear} in the
economics literature.}
%


\KEYWORDS{Cournot oligopoly, price of anarchy} \HISTORY{This paper
was first submitted in October 2011 (revised March 2012).}

\maketitle

%

\section{Introduction}
We consider a Cournot oligopoly model where multiple suppliers
(oligopolists) compete by choosing quantities, with a focus on the
case where the inverse market demand function is convex. Our
objectives are {to compare the optimal social welfare to: (i) the
social welfare at a Cournot equilibrium and (ii) the} social welfare
achieved when the suppliers collude to maximize the total profit,
or, equivalently, when there is a single supplier.

\subsection{Background}

In {a book} 
on oligopoly theory (see Chapter 2.4 of
\cite{F83}), Friedman {raises} two questions on the relation between
Cournot equilibria and competitive equilibria. First, ``is the
Cournot equilibrium close, in some reasonable sense, to the
competitive equilibrium?'' {Furthermore,} ``will the two equilibria
coincide as the number of firms goes to infinity?'' The answer to
the second question seems to be positive, {in general. Indeed,} the
efficiency properties of Cournot equilibria in \old{[large]}
economies and markets {with a large number of suppliers and/or
consumers} have been much explored. For {the case of} a large number
of suppliers,
it is {known} that  every Cournot equilibrium is 
approximately a socially optimal competitive equilibrium
\citep{GV72, NS78, N80}. {Furthermore,} \cite{U85} derives necessary
and/or sufficient conditions on the {relative numbers} of consumers
and suppliers for the efficiency loss associated with every Cournot
equilibrium to approach zero, as the number of suppliers increases
to infinity.

In more recent work, attention has turned to the efficiency of
Cournot equilibria in {settings that involve} an arbitrary (possibly
small) {number} of
 suppliers or consumers.  \cite{AR03} quantify the efficiency loss
{in} Cournot oligopoly models with concave demand functions.
However, most of their results focus on the relation {between}
consumer surplus, producer surplus, and the aggregate social welfare
achieved at a Cournot equilibrium, rather than {on} the relation
between the  social welfare  achieved at a Cournot equilibrium and
the optimal social welfare.

{The concept of efficiency loss is intimately related to the concept
of
 ``price of anarchy,''} {advanced} by
Koutsoupias and Papadimitriou in a {seminal} paper \citep{KP99};
{it} provides a natural measure {of} the difference between a
Cournot equilibrium and a socially optimal competitive equilibrium.
{In the spirit of \cite{KP99}, {we define} the efficiency of a
Cournot equilibrium}  as the ratio of its aggregate social welfare
to the optimal social welfare.
Recent works have reported various efficiency bounds for Cournot
oligopoly with affine demand functions. \cite{KP08} \old{Kluberg and
Perakis} compare the social welfare and the aggregate profit earned
by {the} suppliers under Cournot competition to the corresponding
maximum possible, {for the case where} suppliers produce multiple
differentiated products and {demand is an affine function of the
price.} {Closer to the present paper,}  \cite{JT05} {establish a
$2/3$ lower bound on} the efficiency of a Cournot equilibrium, when
the inverse demand function is affine. They also show that the
$2/3$ lower bound applies to a monopoly model with general concave
demand.

 The efficiency loss in a Cournot oligopoly
 with some specific forms of convex inverse demand functions {has received some recent attention. \cite{C08} studies the special case
of convex inverse demand functions of the form}
$p(q)=\alpha-\beta q^\gamma$,  analyzes the efficiency
loss at a Cournot equilibrium and shows that when $\gamma>0$,
 the worst case efficiency loss occurs when an efficient
{supplier} has to share the market with infinitely many inefficient
suppliers. {\cite{GY05} consider a class of inverse demand functions
that solve a certain differential equation (for example, constant
elasticity inverse demand functions belong to this class), and
establish} efficiency lower bounds that depend on equilibrium market
shares, the market demand, and the number of suppliers.

In this paper, we 
study the {efficiency
loss} in a 
Cournot oligopoly model with
general convex demand functions\footnote{Since a demand
function is generally nonincreasing, the convexity of a demand
function implies that the corresponding inverse demand function is
also convex.}. Convex demand functions, such as the negative
exponential and the constant elasticity demand curves, have been
widely used in oligopoly analysis and marketing research
\citep{BP83,FW09,T99}. {In general,} a Cournot equilibrium {need}
not exist when the inverse demand function is convex. However, it is
well known that a Cournot equilibrium {will exist} if the inverse
demand function is ``not too convex'' {(e.g., if the inverse demand
function is concave),} {in which case the quantities supplied by
different suppliers are} strategic substitutes \citep{BGK85,BP03}.
Existence results for Cournot oligopolies {for the case of}
strategic substitutes can be found in \cite{N85,GS91,S91}, and
\cite{LS00}.
 Note {however, that the} strategic substitutes {condition} is
not necessary for the existence of Cournot equilibria. {For example,
using the theory of} ordinally supermodular games,  \cite{A96} shows
that the log-concavity of inverse demand functions guarantees the
existence of a Cournot equilibrium. {In this paper, we will not
address the case of concave inverse demand functions, which appears
to be qualitatively different, as will be illustrated by an example
in Section \ref{sec:concave}}.

\subsection{Our contribution}


For Cournot oligopolies with convex {and nonincreasing} demand
functions, we establish a lower bound {of the form} $f(c/d)$ on the
efficiency achieved at {a} Cournot equilibrium. {Here, $f$ is a
function given in closed form; $c$ is the absolute value of the
slope of the inverse demand function at the Cournot equilibrium; and
$d$  is the absolute value of the slope of the
 line that
agrees with the inverse demand function at the Cournot equilibrium
and at a socially optimal point.} {For convex and nonincreasing
inverse demand functions,  we} have $c \ge d$; {for affine inverse
demand functions, we have $c/d=1$. In {the latter} case, our
efficiency bound is $f(1)=2/3$, which is consistent with the bound
derived in \cite{JT05}. More generally, the ratio $c/d$ can be
viewed as a  measure of nonlinearity of the inverse demand function.
As the ratio $c/d$ goes to infinity, the lower bound converges to
zero and arbitrarily high efficiency losses are possible. The
usefulness of this result lies in that it allows us to lower bound
the efficiency of Cournot equilibria for a large class of Cournot
oligopoly models {in terms of qualitative properties of the inverse
demand function,} without having to restrict to the special case of
affine demand functions, {and without having to calculate the
equilibrium and the social optimum.}

An interesting special case of our model arises when $N=1$, in which
case we are dealing with a single, monopolistic, supplier. The
previous lower bounds for Cournot equilibria continue to hold.
However, by using the additional assumption that $N=1$, we can hope
to obtain sharper (i.e., larger) lower bounds in terms of the same
scalar parameter $c/d$. Let us also note that the case  $N=1$ also
covers a setting where there are multiple suppliers who choose to
collude and coordinate production so as to maximize their total
profit.

\old{Through the methodology used in our efficiency loss analysis,
we then construct a theoretical framework to compare the social
welfare, consumer surplus, and supplier profit achieved at a Cournot
Equilibrium, a Social Optimum (SO) where the social welfare is
maximized, and a Monopoly Output (MO) where the aggregate profit of
suppliers is maximized. The constructed framework is shown in Table
\ref{table:main}. We derive a lower bound on the ratio of the
aggregate profit earned by all suppliers at a CE to the maximum
possible aggregate profit, that is, the profit that would have been
achieved if the suppliers were to collude at a MO. The lower bound
depends on the number of suppliers, as well as on the same scalar
parameter $c/d$.}

\subsection{Outline of the paper}

The rest of the paper is organized as follows. In the next section,
we formulate the model and review {available} {results on} the
existence of Cournot {equilibria.} In Section \ref{sec:property}, we
{provide} some mathematical preliminaries on Cournot equilibria
{that} will be useful later, {including the fact that efficiency
lower bounds can be obtained by {restricting to} linear cost
functions. In Section \ref{sec:affine}, we {consider affine inverse
demand functions and derive a refined lower bound on the efficiency
of Cournot equilibria that depends on a small amount of ex post
information. We also show this bound to be tight.} In Section
\ref{sec:convex}, we consider a more general model, {involving}
convex inverse demand functions. {We show that for convex inverse
demand functions, and for the purpose of studying the worst case
efficiency loss, it suffices to restrict to a special class of
piecewise linear inverse demand functions. \old{[Based on this fact,
we can relate  the worst case efficiency loss to a
finite-dimensional optimization problem, which we solve in closed
form.]} This leads to} the main result of this paper, a lower bound
on the efficiency of Cournot equilibria (Theorem \ref{thm:convex}).
Based on this theorem, in Section \ref{sec:app} we derive a number
of corollaries that provide efficiency lower bounds that can be
calculated without {detailed} information on {these equilibria,}
{and apply these results to various commonly encountered convex
inverse demand functions.}
 In Section
\ref{sec:mono}, we establish a lower bound on the efficiency of
monopoly outputs (Theorem \ref{thm:mono}), and show by example
that the social welfare  at a monopoly output can be higher than
that achieved at a Cournot equilibrium (Example
\ref{example:mono}).
 Finally, in Section \ref{sec:con}, {we make some brief concluding remarks.}

\bookmarksetup{startatroot}

\section{Formulation and Background} \label{sec:model}

{In this section, we define the Cournot competition model that we
study in this paper. We also review {available} results on the
existence of Cournot equilibria.}

{We} consider a market for a single homogeneous good with inverse
demand function $p: [0, \infty) \to {[0,\infty)}$ and $N$ suppliers.
Supplier $n \in \{1,2,\ldots,N\}$ has a cost function $C_n: [0,
\infty) \to [0, \infty)$. Each supplier $n$ {chooses} a nonnegative
real number $x_n$, which is the amount of the good {to be supplied
{by her.} The \textbf{strategy profile} ${\bx}=(x_1,x_2,\ldots,
x_N)$ results in a total supply denoted by \(X = \sum\nolimits_{n =
1}^N {{x_n}} \), and a corresponding market price $p(X)$.} The
payoff {to} supplier $n$ is
$$
{\pi_n}({x_n},{{\bx}_{ - n}}) = {x_n}p({X})
 - {C_n}({x_n}),
$$
{where we have used the standard notation ${\bx}_{-n}$ to indicate
the vector $\bf x$ with the component $x_n$ omitted.} A strategy
profile $\bx=(x_1,x_2,\ldots, x_N)$ is a Cournot (or Nash)
equilibrium if
\[
{\pi_n}({x_n},{{\bx}_{ - n}}) \ge {\pi_n}(x,{{\bx}_{ -
n}}),\qquad\forall\ x \ge 0,\ \ \forall\ n \in \{ 1,2, \ldots ,N\}.
\]

{In the sequel, we denote by $f'$ and $f''$ the first and second,
respectively, derivatives of a scalar function $f$, if they exist.}
{For a function defined on a domain $[0,Q]$, the derivatives at the
endpoints $0$ and $Q$ are defined as left and right derivatives,
respectively. For points in the interior of the domain, and if the
derivative is not guaranteed to exist,} {we use the notation
$\partial_+f$ and $\partial_-f$ to denote the right and left,
respectively, derivatives of $f$; these are guaranteed to exist for
convex or concave functions~$f$.}

\subsection{Existence results}\label{se:exist}

Some results on the existence of Cournot equilibrium are provided by
\cite{SY77}, but require the concavity of the inverse demand
function. \old{ that a Cournot equilibrium exists when: (i)
$p(\cdot)$ is nonincreasing, concave, and twice continuously
differentiable, on the interval where its value is positive; and,
(ii) the cost functions $C_n(\cdot)$ are nondecreasing, twice
continuously differentiable, and convex. } \cite{M64} provides an
existence result under minimal assumptions on the inverse demand
function $p(\cdot)$, but only for the special case where all
suppliers have the same \old{(convex)} cost function $C(\cdot)$ ---
a rather restrictive assumption. {The most relevant result for our
purposes is provided by \cite{N85} who does not require the
suppliers to be identical or the inverse demand functions to be
concave.}

\old{{[Check that the assumptions of these papers are quoted
correctly.]}}

\begin{proposition} \label{Prop:Novshek85} \citep{N85}
Suppose that {the following conditions hold:}
\begin{enumerate}
\item[(a)] The inverse
demand function $p(\cdot)$ is continuous.

\item[(b)] There exists a real number $Q>0$ such that {$p(q) = 0$ for $q\geq Q$. Furthermore,} $p(\cdot)$ is twice continuously differentiable
and strictly decreasing on $[0, Q)$.

\item[(c)] For every $q \in {[}0,Q)$, {we have} $p'(q)+qp''(q) \le 0$.

\item[(d)] {The cost functions  $C_n(\cdot)$, $n=1,2,\ldots,N$, are}
nondecreasing and lower-semi-continuous.
\end{enumerate}
Then, there exists a Cournot equilibrium.
\end{proposition}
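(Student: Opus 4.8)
The plan is to follow Novshek's backward-reaction / aggregate fixed-point approach, whose engine is the observation that condition (c) makes this a game of \emph{strategic substitutes}. First I would reduce to a compact strategy space: since $p(q)=0$ for $q\ge Q$ and each $C_n$ is nondecreasing, no supplier ever benefits from an output that would push the aggregate beyond $Q$, so we may restrict each $x_n$ to $[0,Q]$. On this compact set, continuity of $p$ (condition (a)) together with lower-semicontinuity of $C_n$ (condition (d)) makes each payoff $\pi_n(\cdot,\bx_{-n})=x_n p(X)-C_n(x_n)$ upper-semicontinuous in $x_n$, so every best-response set is nonempty and compact. This settles the existence of individual best responses and reduces the problem to producing a mutually consistent profile.

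The core step is to parametrize best responses by the resulting \emph{aggregate} rather than by the residual supply of the rivals. For a target aggregate $X\in[0,Q]$, let $y_n(X)$ denote an optimal own output for supplier $n$ under the hypothesis that total output equals $X$ (so rivals supply $X-y_n(X)$); at a point where $C_n$ is differentiable and the optimum is interior, this is characterized by the stationarity condition $p(X)+y\,p'(X)=C_n'(y)$, i.e.\ marginal revenue equals marginal cost, with $y_n(X)=0$ when production is unprofitable. The key monotonicity claim is that $y_n(X)$ is nonincreasing in $X$, and this is exactly where condition (c) enters. Differentiating the marginal-revenue term $p(X)+y\,p'(X)$ with respect to $X$ at fixed $y$ gives $p'(X)+y\,p''(X)$; since this is affine in $y$, its maximum over $y\in[0,X]$ is attained at an endpoint, equal either to $p'(X)<0$ (at $y=0$, using strict monotonicity of $p$) or to $p'(X)+X\,p''(X)\le 0$ (at $y=X$, by condition (c)). Hence $p'(X)+y\,p''(X)\le 0$ throughout, so each supplier's marginal revenue is nonincreasing in the aggregate, forcing the optimal individual response to shrink as $X$ grows.

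Granting this monotonicity, I would finish with a one-dimensional fixed-point argument. Set $\Phi(X)=\sum_{n=1}^N y_n(X)$, a nonincreasing function of $X$. At $X=Q$ we have $p(Q)=0$, so marginal revenue is nonpositive while marginal cost is nonnegative, giving $y_n(Q)=0$ and $\Phi(Q)=0<Q$; at $X=0$ we trivially have $\Phi(0)\ge 0$. Thus $\Phi(X)-X$ is weakly decreasing, nonnegative at $X=0$, and negative at $X=Q$, so it vanishes at some $X^\ast\in[0,Q]$, i.e.\ $\sum_n y_n(X^\ast)=X^\ast$. Putting $x_n^\ast=y_n(X^\ast)$ yields a profile with total output $X^\ast$, so each supplier's residual is exactly $X^\ast-x_n^\ast$, and by construction $x_n^\ast$ is a best response to it; this is the desired Cournot equilibrium.

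The main obstacle is not the fixed-point step but the rigorous construction of the monotone backward-reaction map under the deliberately weak hypothesis (d): because the $C_n$ are merely nondecreasing and lower-semicontinuous, $C_n'$ need not exist, the stationarity condition may fail to pin down a unique $y$, and $y_n(\cdot)$ may be set-valued with jumps. Handling this requires replacing the differentiable first-order reasoning by a monotone-comparative-statics argument that compares optimal outputs at two aggregates directly from the definition of optimality (rather than from derivatives), together with enough semicontinuity of the $\arg\max$ correspondence to ensure $\Phi$ still crosses the diagonal. Verifying that the stationary profile is a \emph{global} best response, rather than merely a stationary point, likewise leans on condition (c), which supplies the single-crossing structure in the own-output variable needed to exclude more profitable deviations.
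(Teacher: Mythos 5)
The paper itself does not prove this proposition; it is quoted from Novshek (1985), so your reconstruction has to be measured against Novshek's argument. Your overall plan is indeed his: compactify, pass to backward reactions parametrized by the aggregate, and exploit condition (c). Your verification that (c) yields strategic substitutes is correct and nicely done (the term $p'(X)+y\,p''(X)$ is affine in $y$, so it suffices to check the endpoints $y=0$ and $y=X$), and it matches the remark the paper makes right after the proposition. But the execution has two genuine gaps, the second of which is fatal to the proof as written.

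First, the backward reaction map can be \emph{empty}: $y_n(X)$ requires a $y\in[0,X]$ that is an unconstrained best response to rivals' supply $X-y$, and no such $y$ exists whenever every best response of supplier $n$ to outputs in $[0,X]$ exceeds $X$ (e.g., for any $X$ below the monopoly output). So $\Phi$ is not defined on all of $[0,Q]$, and the endpoint claim ``$\Phi(0)\ge 0$'' has no content; Novshek must (and does) work only on the subinterval of aggregates where the aggregate backward reaction is nonempty. Second, and more seriously, the crossing step fails under hypothesis (d): since the $C_n$ are only nondecreasing and lower-semicontinuous --- in particular, possibly nonconvex --- best responses are correspondences whose selections have downward jumps, $\Phi$ inherits those jumps, and a nonincreasing function with downward jumps can jump \emph{over} the diagonal, so ``$\Phi(X)-X$ is decreasing, positive at one end, negative at the other'' does not imply it vanishes. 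There is also a quantitative issue hidden in your monotonicity claim: strategic substitutes alone does not make the backward reaction nonincreasing (a drop in rivals' output accompanied by a larger rise in own output is perfectly consistent with substitutes); in the smooth case one needs the slope bound $\pi_{yy}=\pi_{yz}+p'\le\pi_{yz}$, i.e.\ reaction slopes in $[-1,0]$, and under (d) this requires a nonsmooth comparative-statics argument. Your closing paragraph concedes these difficulties but only gestures at resolving them (``enough semicontinuity of the $\arg\max$'' is itself unavailable, since Berge's theorem needs a continuous, not merely upper-semicontinuous, objective). Filling exactly that hole --- showing that at the critical aggregate where the jumping, set-valued backward reaction sum falls below the diagonal one can still assemble an exact equilibrium profile --- is the entire technical content of Novshek's paper. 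Note, finally, that under the paper's standing Assumption 1 (convex costs) the proposition is easy: condition (c) gives $2p'+y\,p''\le p'\le 0$, so each supplier's revenue, hence payoff, is concave in own output and a routine Kakutani argument applies; the generality of hypothesis (d) is precisely the nonconvex-cost case that your sketch does not reach.
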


{If the inverse demand function $p(\cdot)$ is convex}, the condition
(c) in the preceding proposition {implies that}
\[\frac{{{\partial ^2}{\pi _n}}}{{\partial {x_n}\partial {x_m}}}(X)
\le 0,\qquad\forall\ m \ne n, \ \  \forall\ X \in (0,Q),\] i.e.,
{that} the quantities supplied by different suppliers are} strategic
substitutes. {We finally note that \citet{A96} proves existence of a
Cournot equilibrium in a setting where the strategic substitutes
condition does not hold. Instead, this reference assumes that the
 inverse
demand function $p(\cdot)$  is strictly decreasing and log-concave.}

\
\old{\begin{proposition} \rm{\textbf{(Amir (1996))}}
\label{Prop:Amir96} Suppose that
\begin{enumerate}
\item The inverse
demand function $p(\cdot)$  is strictly decreasing and
log-concave\footnote{Since $p(\cdot)$ is strictly decreasing, we let
$\mathcal {P}$ be the convex subset of $[0,\infty)$ such that
$p(q)>0$ for every $q$ in $\mathcal {P}$. We say $p(\cdot)$ is
log-concave if the function, $\log (p): \mathcal {P} \to
[0,\infty)$, is concave. }.

\item For every $n$, $C_n(\cdot)$ is strictly increasing and
left-continuous.

\item There exists some real number $Q>0$ such that $p(Q) = 0$.
\end{enumerate}
Then there exists a Cournot equilibrium.
\end{proposition}

\begin{example}
The following two convex inverse demand functions do not satisfy the
conditions required in Proposition \ref{Prop:Novshek85}, but the
conditions in the previous proposition:
\[
p(q) = \left\{ \begin{array}{l}
 {(Q - q)^2},\;\;\;\;\;{\rm{if}}\;\;\;0 \le q \le Q, \\
 0,\;\;\;\;\;\;\;\;\;\;\;\;\;\;\;\;{\rm{if}}\;\;\;\; q> Q,\; \\
 \end{array} \right.
 \]
and
\[p(q) = {e^{ - aq}},\;\;\;0 \le q,\]
where $a$ is some positive constant.
\end{example}
}

\section{Preliminaries on Cournot Equilibria}\label{sec:property}
In this section, we introduce several {main} assumptions {that we
will be working with, {and some definitions.}
 In Section \ref{sec:property1},  we present
conditions for a nonnegative  vector to be a social optimum or a
Cournot equilibrium. Then, in Section
\ref{sec:property2}, we 
define the efficiency of a Cournot equilibrium. In Sections
\ref{sec:linear-worst} and  \ref{sec:property3}, we derive some
properties {of} Cournot equilibria {that will be useful later, but
which may also be of some independent interest.} For example, we
show that the worst case efficiency occurs when the cost functions
are linear. \old{[{These} results will be {used} in Sections
\ref{sec:affine} and \ref{sec:convex}.]} The proofs of all
propositions in this section (except for Proposition
\ref{Prop:trivial}) are given in Appendix \hyperlink{page.29}{A}.

\begin{assumption}\label{A:cost}
 For any $n$, 
the cost function $C_n:[0,\infty) \to [0,\infty)$ is  convex,
continuous, and nondecreasing on $[0,\infty)$, and continuously
differentiable on $(0,\infty)$. Furthermore, $C_n(0)=0$.
\end{assumption}

\begin{assumption}\label{A:demand}
The inverse demand function $p: [0,\infty) \to [0,\infty)$ is
continuous, nonnegative, and nonincreasing, with $p(0)>0$. Its right
derivative at $0$ exists and {at} every $q>0$, its left and right
derivatives {also} exist.
\end{assumption}
Note that we do not {yet} assume that the inverse demand function is
convex. {The reason is that}  {some of} the results to be derived in
this section  are valid even in the absence of such a convexity
assumption. Note also that some parts of our assumptions are
redundant, but are included for easy reference. For example, if
$C_n(\cdot)$ is convex and nonnegative, with $C_n(0)=0$, then it is
automatically continuous and nondecreasing.

\begin{definition}\label{Def:optimal}
 The \textbf{optimal social welfare} 
is the optimal {objective} value  in the following optimization
problem,
\begin{equation}\label{equa:optimal}
\displaystyle
\begin{array}{l}
 {\rm{maximize}}\quad\ds{\int_0^{{X} } {p(q)\,dq}  - \sum_{n = 1}^N {C_n({x_n})}}  \\
 {\rm{subject}}\;{\rm{to}}\quad{x_n} \ge 0,\qquad n = 1,2, \ldots ,N,
 \end{array}
 \end{equation}
{where $X=\sum_{n=1}^N x_n$.}
\end{definition}

In the above definition, $\int_0^{{X} }
{p(q)\,dq}$ is the aggregate consumer surplus 
and $\sum_{n = 1}^N {C_n({x_n})}$ is the total cost {of} {the}
suppliers. The objective function in (\ref{equa:optimal}) is a
measure {of} the social welfare across the entire economy of
consumers and suppliers, the same measure as {the one} used in
\cite{U85} and \cite{AR03}.

{For a model with a nonincreasing {continuous} inverse demand
function and {continuous} convex cost functions, the following
assumption guarantees the existence of an optimal solution to
\eqref{equa:optimal},} {because it essentially restricts the
optimization to the compact set of vectors $\bx$ for which $x_n\leq
R$, for all $n$.}

\begin{assumption}\label{A:optimal}
{There exists some $R>0$ such that $p(R) \le \min_n \{ C'_n(0) \}$.}
\end{assumption}

\subsection{Optimality and equilibrium conditions}\label{sec:property1}

We observe that under Assumption \ref{A:cost} and \ref{A:demand},
the objective function in (\ref{equa:optimal}) is concave. Hence, we
have the following \emph{necessary and sufficient} conditions for a
{vector} $\bx^S$ to achieve the optimal social welfare:
\begin{equation}\label{equa:optimality}
\left\{ \begin{array}{l}
\displaystyle C'_n(x_n^S) = p\left({X^S} \right),\;\;\;\;{\rm{if}}\;x_n^S > 0, \\[5pt]
\displaystyle C'_n(0) \ge p\left({X^S} \right),\;\;\;\;\;\;\; {\rm{if}}\;x_n^S = 0, \\
 \end{array} \right.
 \end{equation}
{where $X^S=\sum_{n=1}^N x_n^S$.}


The social optimization problem \eqref{equa:optimal} may admit
multiple optimal solutions. However, as we now show, they must all
result in the same price. We note that the differentiability of the
cost functions is crucial for this result to hold.

{\begin{proposition} \label{Prop:sameprice} Suppose that Assumptions
\ref{A:cost} and \ref{A:demand} hold. All optimal solutions to
(\ref{equa:optimal}) result in the same price.
\end{proposition}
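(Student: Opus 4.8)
The plan is to argue by contradiction using the necessary and sufficient optimality conditions \eqref{equa:optimality}, together with the fact that each marginal cost $C_n'$ is nondecreasing (which follows from the convexity of $C_n$ in Assumption \ref{A:cost}). Let $\bx^1$ and $\bx^2$ be two optimal solutions of \eqref{equa:optimal}, with aggregate supplies $X^1=\sum_n x_n^1$ and $X^2=\sum_n x_n^2$, and suppose for contradiction that $p(X^1)\neq p(X^2)$. Since $p$ is nonincreasing (Assumption \ref{A:demand}), I may relabel the two solutions so that $X^1\le X^2$; then $p(X^1)\ge p(X^2)$, and the assumed inequality forces $p(X^1)>p(X^2)$.

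The core step is to show that this price gap forces $x_n^1\ge x_n^2$ for \emph{every} supplier $n$, which is impossible: summing over $n$ would give $X^1\ge X^2$, hence $X^1=X^2$ and therefore $p(X^1)=p(X^2)$, contradicting $p(X^1)>p(X^2)$. I would establish the componentwise inequality by a short case analysis on whether each coordinate is zero or positive, applying \eqref{equa:optimality} at each of the two optima. When $x_n^1,x_n^2>0$, the conditions give $C_n'(x_n^1)=p(X^1)>p(X^2)=C_n'(x_n^2)$, so that $x_n^1>x_n^2$ because $C_n'$ is nondecreasing. The cases $x_n^1>0=x_n^2$ and $x_n^1=x_n^2=0$ give $x_n^1\ge x_n^2$ trivially.

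The main obstacle is the remaining configuration, $x_n^1=0$ with $x_n^2>0$, which must be ruled out: here \eqref{equa:optimality} yields $C_n'(0)\ge p(X^1)>p(X^2)=C_n'(x_n^2)$, i.e.\ $C_n'(0)>C_n'(x_n^2)$, which contradicts $C_n'(0)\le C_n'(x_n^2)$ (monotonicity of $C_n'$, since $0<x_n^2$); hence this configuration cannot occur. This is precisely the step where the differentiability of the cost functions is used, since the comparison rests on the single-valued marginal costs $C_n'(x_n^i)$ matching the common price through the \emph{equalities} in \eqref{equa:optimality}. Once the componentwise inequality $x_n^1\ge x_n^2$ is in hand for all $n$, summation gives $X^1\ge X^2$, which together with $X^1\le X^2$ forces $X^1=X^2$ and completes the contradiction. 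As an independent cross-check I would note a second route that bypasses the optimality conditions: concavity of the objective makes the midpoint $(\bx^1+\bx^2)/2$ optimal, and splitting the resulting equality into its consumer-surplus part $\int_0^X p(q)\,dq$ and its cost part shows that $\int_0^X p(q)\,dq$ must be affine between $X^1$ and $X^2$, so that $p$ is constant there and $p(X^1)=p(X^2)$.
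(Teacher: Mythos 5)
Your proposal is correct and follows essentially the same route as the paper's proof: assume two optima with different prices, use the optimality conditions \eqref{equa:optimality} together with the monotonicity of the marginal costs $C_n'$ (from convexity) to derive a componentwise comparison of the two solutions, and obtain a contradiction with the ordering of the aggregate supplies. Your four-case analysis is just a more explicit version of the paper's single inequality chain $C_n'(\overline{x}_n^S)=p(\overline{X}^S)<p(X^S)\le C_n'(x_n^S)$ (which handles the zero and positive cases of the other optimum at once), and the concavity/midpoint cross-check you sketch at the end is a valid independent argument but is not needed.
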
}


\old{
 \begin{definition}\label{Def:com}
A nonnegative vector $(x_1,\ldots,x_N)$ is a \textbf{competitive
equilibrium}, if
\[{x_n} \in \arg {\max _{x \ge 0}}\left\{ {xp\left( X \right) - {C_n}(x)} \right\},\;\;\;\;n = 1, \ldots ,N,\]
where \(X = \sum\nolimits_{n = 1}^N {{x_n}} \).
 \end{definition}

\begin{proposition} \label{Prop:competitive}
Suppose that Assumptions \ref{A:cost} and \ref{A:demand} hold. Then,
 a
nonnegative vector $\textbf{x}$ is a competitive equilibrium if and
only if it is a solution to the optimization problem in
(\ref{equa:optimal}).

\end{proposition}

}

{There are similar {equilibrium} conditions for a strategy profile
$\bx$. In particular, {under Assumptions \ref{A:cost} and
\ref{A:demand},} if $\textbf{x}$ is a Cournot equilibrium, then}
\begin{align}\label{equa:nece1}
& C'_n(x_n^{}) \le p\left( X \right) + {x_n} \cdot \partial_- p \left( X \right),\;\;\;\;{\rm{if}}\;x_n > 0, \\
& C'_n(x_n^{}) \ge p\left( X \right) + {x_n} \cdot \partial_+
p\left( X \right), \label{eq:necsec}
 \end{align}
where {again} $X=\sum\nolimits_{n = 1}^N {{x_n}} $. {Note, however,
that in the absence of further assumptions, the payoff of supplier
$n$ need not be a {concave} function of $x_n$ and these conditions
are, in general, not sufficient.}

{We will say that a nonnegative vector $\bx$ is a \textbf{Cournot
candidate} if it satisfies the necessary conditions
(\ref{equa:nece1})-\eqref{eq:necsec}. Note that for a given model,
the set of Cournot equilibria is a subset of the set of Cournot
candidates. Most of the results obtained in this section, including
the efficiency lower bound in Proposition \ref{Prop:linear}, apply
to all Cournot candidates.}

For convex inverse demand functions, the necessary conditions
 (\ref{equa:nece1})-\eqref{eq:necsec} can be further
refined.  

\begin{proposition}\label{Prop:derivative}
 Suppose that Assumptions \ref{A:cost}
 and \ref{A:demand} hold, {and} {that} the inverse demand function {$p(\cdot)$} is convex. 
 {If $\bx$ is} a Cournot  {candidate} with $X=\sum_{n
= 1}^N {{x_n}}>0$,  {then   $p(\cdot)$ must be differentiable at
$X$,} i.e.,
\[\partial_- p \left( X \right)=
\partial_+ p\left( X \right).
\]
\end{proposition}

Because of the above proposition, {when {Assumptions \ref{A:cost}
and \ref{A:demand} hold and the} inverse demand function is convex,}
we have the following necessary (and, by definition, sufficient)
conditions for a nonzero vector $\mathbf{x}$ to be a Cournot
{candidate:}

\begin{equation}\label{equa:nece}
\left\{ \begin{array}{l}
 \displaystyle C'_n(x_n^{}) = p\left( X \right) + {x_n}p'(X),\;\;\;\;{\rm{if}}\;x_n^{} > 0,
 \\[5pt]
\displaystyle  C'_n(0) \ge p\left( X \right) + {x_n}p'(X),\;\;\;\;\; \; \;{\rm{if}}\;x_n^{} = 0. \\
 \end{array} \right.
 \end{equation}


\subsection{Efficiency of Cournot equilibria}\label{sec:property2}

 As shown in \cite{F77}, {if
$p(0) > \min_n\{C'_n(0)\}$, then the aggregate supply at a Cournot
equilibrium is positive}; see Proposition \ref{Prop:trivial} below
for a slight generalization. {If on the other hand $p(0)\leq
\min_n\{C'_n(0)\}$, then the model is uninteresting, because no
supplier has an incentive to produce and the optimal social welfare
is zero. This motivates the assumption that follows.}

\begin{assumption}\label{A:p0}
The price at zero supply is larger than the minimum marginal cost of
the suppliers, i.e.,
$$
p(0) > \min_n\{C'_n(0)\}.
$$
\end{assumption}

\begin{proposition}\label{Prop:trivial}
Suppose that Assumptions \ref{A:cost},   \ref{A:demand}, and
\ref{A:p0}   hold. If $\bx$ is a Cournot {candidate}, then $X>0$.
\end{proposition}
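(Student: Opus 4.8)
The plan is to argue by contradiction. Suppose that $\bx$ is a Cournot candidate with $X=\sum_{n=1}^N x_n = 0$. Since each $x_n \ge 0$ and their sum vanishes, this forces $x_n = 0$ for every $n \in \{1,\ldots,N\}$. The goal is then to show that such a configuration is incompatible with Assumption \ref{A:p0}, which is the only place where the strict inequality $p(0) > \min_n\{C'_n(0)\}$ enters.

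The key step is to invoke the necessary condition \eqref{eq:necsec}, which every Cournot candidate must satisfy (and which, unlike \eqref{equa:nece1}, carries no ``if $x_n>0$'' qualifier and therefore applies to all suppliers). Evaluated at supplier $n$ with $x_n=0$ and $X=0$, it reads $C'_n(0) \ge p(0) + 0\cdot \partial_+ p(0)$. The right derivative $\partial_+ p(0)$ exists by Assumption \ref{A:demand}, but it is multiplied by $x_n=0$, so the correction term vanishes and the inequality collapses to $C'_n(0) \ge p(0)$. Since this holds for each $n$, taking the minimum over $n$ yields $\min_n\{C'_n(0)\} \ge p(0)$, which directly contradicts Assumption \ref{A:p0}. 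Hence the supposition $X=0$ is untenable, and we conclude $X>0$.

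I do not expect a genuine obstacle here: once one observes that at a candidate with zero aggregate supply the marginal-revenue correction $x_n\,\partial_+ p(X)$ drops out, the equilibrium condition reduces to the elementary comparison between marginal cost at zero and the price $p(0)$, and the conclusion is immediate. The only points requiring (minor) care are confirming that \eqref{eq:necsec} is indeed asserted for all suppliers, including those producing nothing, and noting that $\partial_+ p(0)$ is well defined so that the expression $p(0)+x_n\,\partial_+ p(0)$ makes sense even before the product is seen to vanish. This is precisely why the statement only needs Assumptions \ref{A:cost}, \ref{A:demand}, and \ref{A:p0}, and does not require convexity of $p(\cdot)$.
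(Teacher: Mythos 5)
Your proposal is correct and is essentially the paper's own proof, spelled out in more detail: the paper likewise observes that the zero vector violates condition \eqref{eq:necsec} under Assumption \ref{A:p0}, since at $\bx=(0,\ldots,0)$ that condition collapses to $C'_n(0)\ge p(0)$ for all $n$, contradicting $p(0)>\min_n\{C'_n(0)\}$. Your elaborations (the vanishing of the term $x_n\,\partial_+p(0)$, and that \eqref{eq:necsec} applies to all suppliers without a positivity qualifier) are exactly the details the paper's one-line proof leaves implicit.
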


\proof{Proof} Suppose that $p(0) > \min_n\{C'_n(0)\}$. {Then, the
vector $\mathbf{x}=(0,\ldots,0)$ violates condition
(\ref{eq:necsec}), and} cannot be a Cournot candidate.\Halmos
\endproof}

{Under Assumption \ref{A:p0}, at least one supplier has an incentive
to choose a positive quantity, which leads us to the  next result.}

\begin{proposition}\label{Prop:positive}
Suppose that Assumptions \ref{A:cost}-\ref{A:p0} hold. Then, the
social welfare achieved at a Cournot {candidate}, {as well as} the
optimal
 {social welfare [cf.\ (\ref{equa:optimal})]}, are positive.
\end{proposition}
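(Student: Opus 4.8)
The plan is to handle the two assertions by different means: the optimal social welfare admits a direct construction, whereas the value at a Cournot candidate requires the equilibrium conditions together with a strictness argument.

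For the optimal social welfare, I would simply exhibit a feasible vector with positive objective. Let $m$ be a supplier attaining $\min_n C'_n(0)$, and consider producing a small quantity $\epsilon>0$ by supplier $m$ alone. Along this ray the objective in \eqref{equa:optimal} equals $g(\epsilon)=\int_0^\epsilon p(q)\,dq-C_m(\epsilon)$, with $g(0)=0$ and right derivative $g'(0^+)=p(0)-C'_m(0)>0$ by Assumption~\ref{A:p0}. Hence $g(\epsilon)>0$ for small $\epsilon>0$, and since the optimal social welfare is the maximum over all feasible vectors, it is positive.

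Now let $\bx$ be a Cournot candidate; by Proposition~\ref{Prop:trivial} we have $X>0$. Since $p$ is nonincreasing, $\int_0^X p(q)\,dq-Xp(X)=\int_0^X\bigl(p(q)-p(X)\bigr)\,dq\ge 0$. For any supplier with $x_n>0$, the necessary condition \eqref{equa:nece1} together with $\partial_-p(X)\le 0$ gives $C'_n(x_n)\le p(X)$, and convexity with $C_n(0)=0$ yields $C_n(x_n)\le x_nC'_n(x_n)\le x_np(X)$. Writing the social welfare as
\[
W(\bx)=\int_0^X\bigl(p(q)-p(X)\bigr)\,dq+\sum_{n=1}^N\bigl(x_np(X)-C_n(x_n)\bigr),
\]
both grouped terms are nonnegative, so $W(\bx)\ge 0$.

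The main obstacle is upgrading this to a strict inequality, which I would do by contradiction. If $W(\bx)=0$, then both nonnegative terms above vanish: by continuity $p(q)=p(X)$ for all $q\in[0,X]$, so in particular $p(X)=p(0)>C'_m(0)$ by Assumption~\ref{A:p0}; and $x_np(X)=C_n(x_n)$ for every $n$. If $x_m=0$, then \eqref{eq:necsec} forces $C'_m(0)\ge p(X)$, contradicting $p(X)>C'_m(0)$. If $x_m>0$, then $C'_m$ is nondecreasing with $C'_m(0)<p(X)$ and $C'_m(x_m)\le p(X)$, so $C_m(x_m)=\int_0^{x_m}C'_m(t)\,dt<x_mp(X)$, contradicting $x_mp(X)=C_m(x_m)$. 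Either way we reach a contradiction, giving $W(\bx)>0$. The delicate point is thus the case analysis on the lowest-marginal-cost supplier $m$, which is exactly where Assumption~\ref{A:p0} and the equilibrium conditions must be combined.
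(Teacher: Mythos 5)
Your proof is correct, and while the first half (positivity of the optimal social welfare) is essentially identical to the paper's --- both exhibit a positive right derivative of the objective at $\mathbf{0}$ in the direction of the best supplier $m$ --- your argument for positivity at a Cournot candidate uses a genuinely different mechanism for the crucial strictness step. The paper splits into two cases according to whether $C'_n(x_n)=p(X)$ for \emph{every} active supplier: if so, the candidate satisfies the sufficient optimality conditions \eqref{equa:optimality}, hence is socially optimal, and its welfare equals the optimal welfare, which is positive by the first part; if not, the strict inequality $C'_n(x_n)<p(X)$ for some active $n$ propagates through the chain $\sum_n C_n(x_n)\le \sum_n C'_n(x_n)x_n < Xp(X)\le \int_0^X p(q)\,dq$. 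You instead prove $W(\bx)\ge 0$ via the two-term decomposition and then rule out $W(\bx)=0$ by contradiction, localizing the argument at the lowest-marginal-cost supplier $m$. What your route buys: it is self-contained for the candidate part (no appeal to the concavity/sufficiency of the first-order conditions for the social problem, and no reduction to the first part of the proposition). What it costs: in the case $x_m>0$, the strict inequality $\int_0^{x_m}C'_m(t)\,dt < x_m p(X)$ does not follow from monotonicity of $C'_m$ and $C'_m(0)<p(X)$ alone --- one also needs $\lim_{t\downarrow 0}C'_m(t)=C'_m(0)$ (right-continuity of the derivative at $0$, which holds here because $C_m$ is convex and continuously differentiable on $(0,\infty)$ with $C'_m(0)$ defined as the right derivative); you should make this clause explicit, though it is a standard fact and not a gap in the logic.
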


\old{
\begin{proof}
Let $\textbf{x}=(x_1,\ldots,x_N)$ be a Cournot equilibrium. {Because
of Assumption \ref{A:p0}, Proposition \ref{Prop:trivial} applies,
and} we have $X>0$. For every supplier $n$ such that $x_n>0$, {the
necessary conditions \eqref{equa:nece1} \com{Previous version had
\eqref{equa:nece}} and the fact that $p(\cdot)$ is nonincreasing
imply that}
${C'_n}({x_n}) \le p{(X)}$.
Hence,
\begin{equation}
\sum\nolimits_{n = 1}^N {C'_n({x_n})\,x_n}  \le p {(X) \cdot X} \le
\int_0^{X}{p(q)\,dq},
 \end{equation}
where the last inequality holds because the function $p(\cdot)$ is
nonincreasing. Since for each $n$, $C_n(\cdot)$ is convex and
nondecreasing, we obtain
\begin{equation}\label{equa:positive}
\sum\nolimits_{n = 1}^N {C_n({x_n})}  \le \sum\nolimits_{n = 1}^N
{C'_n({x_n})\,x_n}
  \le
\int_0^{{X}} {p(q)\, dq}.
 \end{equation}
Hence, the social welfare achieved at the Cournot equilibrium,
\(\int_0^{{X}} {p(q)\,dq}  - \sum\nolimits_{n = 1}^N {C_n({x_n})}
\), is nonnegative.

We {now} let
$$
k \in {\rm{arg}}\min_n\{C'_n(0)\}.
$$
{Because of} Assumption \ref{A:p0} and the continuity of {the}
inverse demand and cost functions, it is not hard to see that there
exists some \( \varepsilon  > 0 \) such that
$$
\int_0^\varepsilon  {p(q)\,dq} - C_k(\varepsilon) >0,
$$
which implies that the optimal objective value in the optimization
problem (\ref{equa:optimal}) is positive.

\end{proof}
}

{We now define the efficiency of a Cournot equilibrium as the ratio
of the social welfare that it achieves to the optimal social
welfare. It is actually convenient to define the efficiency of a
general vector $\bx$, not necessarily a Cournot equilibrium. }

\begin{definition}\label{Def:efficiency}
Suppose that Assumptions \ref{A:cost}-\ref{A:p0} hold. The
\textbf{efficiency} of a nonnegative vector
$\textbf{x}=(x_1,\ldots,x_N)$ is defined as
\begin{equation} \label{eq:gamma}
\gamma (\textbf{x}) = \dfrac{\ds{{\int_0^{{X} } {p(q)\,dq}  -
\sum_{n = 1}^N {C_n(x_n)} }}}{\ds{{\int_0^{{X^S} } {p(q)\,dq}  -
\sum_{n = 1}^N {C_n({x^S_n})}} }},
 \end{equation}
where ${\bx^S=}(x_1^S,\ldots, x_N^S)$ is an optimal solution of the
optimization problem in (\ref{equa:optimal}) {and
$X^S=\sum\nolimits_{n = 1}^N {{x^S_n}}$.}
\end{definition}

{We note that $\gamma(\bx)$ is well defined:
 because of Assumption \ref{A:p0} {and Proposition \ref{Prop:positive},}
  the denominator {on the right-hand side of \eqref{eq:gamma}} is guaranteed to be positive. Furthermore, {even}
if there are multiple socially optimal solutions $\bx^S$, the value
of the denominator is the same for all such $\bx^S$.} {Note that
$\gamma(\bx)\leq 1$ for every nonnegative vector $\bx$.}
 {Furthermore, if $\bx$ is a Cournot {candidate}, then
$\gamma(\bx)>0$, by Proposition \ref{Prop:positive}.} 

\subsection{{Restricting to linear cost functions}}\label{sec:linear-worst} {In this section, we show that in order to study the worst-case efficiency of Cournot equilibria, it suffices to consider linear cost functions.
We first provide a lower bound on $\gamma(\bx)$ and then proceed to
interpret  it. }

\begin{proposition}\label{Prop:linear}
 Suppose that Assumptions \ref{A:cost}-\ref{A:p0} hold {and that $p(\cdot)$ is convex.}
Let $\mathbf{x}$ be a {Cournot candidate which is not socially
optimal,} {and let $\alpha_n=C'_n(x_n)$.} {Consider a modified model
in which we replace the cost function of each supplier $n$ by a new
function $\overline C_n(\cdot)$, defined by}
$$
\overline C_n(x)=\alpha_n x,\qquad \forall\ x \ge 0.
$$
Then, {for the modified model,  Assumptions \ref{A:cost}-\ref{A:p0}
still hold, the vector $\mathbf{x}$ is a Cournot candidate, and its
efficiency, denoted by $\overline \gamma(\mathbf{x})$, satisfies}
{$0<  \overline \gamma(\mathbf{x}) \leq \gamma(\mathbf{x})$.}
\end{proposition}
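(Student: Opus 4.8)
The plan is to proceed in three stages: first confirm that the modified (linear-cost) model still satisfies Assumptions~\ref{A:cost}--\ref{A:p0} and that $\mathbf x$ remains a Cournot candidate for it; then compare the four relevant welfare quantities across the two models; and finally combine these comparisons into the ratio inequality $\overline\gamma(\mathbf x)\le\gamma(\mathbf x)$.

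For the first stage, Assumptions~\ref{A:cost} and~\ref{A:demand} are immediate: each $\overline C_n(x)=\alpha_n x$ is linear with $\alpha_n=C_n'(x_n)\ge C_n'(0)\ge 0$ and $\overline C_n(0)=0$, while $p(\cdot)$ is unchanged. Since $\alpha_n\ge\min_m C_m'(0)$, the radius $R$ furnished by Assumption~\ref{A:optimal} for the original model still satisfies $p(R)\le\min_n\alpha_n$, so Assumption~\ref{A:optimal} holds. Assumption~\ref{A:p0} requires $p(0)>\min_n\alpha_n$, and this is where I would use the hypothesis that $\mathbf x$ is \emph{not} socially optimal: by Proposition~\ref{Prop:trivial} we have $X>0$, hence by Proposition~\ref{Prop:derivative} $p(\cdot)$ is differentiable at $X$; if $p'(X)=0$, then the Cournot conditions~\eqref{equa:nece} collapse to the social-optimality conditions~\eqref{equa:optimality} with $X^S=X$, forcing $\mathbf x$ to be socially optimal, a contradiction. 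Thus $p'(X)<0$, and for any $n$ with $x_n>0$ we get $\alpha_n=p(X)+x_n p'(X)<p(X)\le p(0)$, giving Assumption~\ref{A:p0}. Finally, $\mathbf x$ is a Cournot candidate for the modified model because $\overline C_n'(x_n)=\alpha_n=C_n'(x_n)$ turns~\eqref{equa:nece} written for $\overline C_n$ into the identical (and already satisfied) conditions.

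The heart of the argument is one convexity observation. Set $h_n(y)=\alpha_n y-C_n(y)$; since $C_n'$ is nondecreasing with $C_n'(x_n)=\alpha_n$, the derivative $h_n'=\alpha_n-C_n'$ is nonnegative on $[0,x_n]$ and nonpositive beyond, so $h_n$ attains its maximum over $[0,\infty)$ at $y=x_n$, while $h_n(0)=0$. Let $g(\mathbf y)=\int_0^{Y}p(q)\,dq-\sum_n C_n(y_n)$ and $\overline g(\mathbf y)=\int_0^{Y}p(q)\,dq-\sum_n\alpha_n y_n$ denote the social-welfare objectives [cf.\ \eqref{equa:optimal}] of the two models, with $W(\mathbf x)=g(\mathbf x)$, $W^*=\max_{\mathbf y}g(\mathbf y)$ and $\overline W(\mathbf x),\overline W^*$ their modified counterparts; note $g-\overline g=\sum_n h_n$. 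From $h_n(x_n)\ge h_n(0)=0$ I obtain $W(\mathbf x)\ge\overline W(\mathbf x)$ (the numerator comparison). From $h_n(y_n)\le h_n(x_n)$ for every $\mathbf y$, I get $g(\mathbf y)\le\overline g(\mathbf y)+\sum_n h_n(x_n)$ pointwise; maximizing over $\mathbf y$ yields $W^*\le\overline W^*+\big(W(\mathbf x)-\overline W(\mathbf x)\big)$, i.e.\ $W^*-W(\mathbf x)\le\overline W^*-\overline W(\mathbf x)$.

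It remains to convert these into the claimed ratio inequality. All four welfare values are positive (Proposition~\ref{Prop:positive}, applied to each model), so $\overline\gamma(\mathbf x)>0$. Writing $a=W(\mathbf x)$, $A=W^*$, $b=\overline W(\mathbf x)$, $B=\overline W^*$, I have $0<b\le a\le A$, $0<b\le B$, and $A-a\le B-b$; a short computation using $B\ge A-a+b$ gives $bA-aB\le(b-a)(A-a)\le 0$, hence $b/B\le a/A$, which is $\overline\gamma(\mathbf x)\le\gamma(\mathbf x)$. I expect the two welfare comparisons—especially the denominator relation, which hinges on recognizing that linearizing at $\alpha_n=C_n'(x_n)$ makes $h_n$ peak exactly at the Cournot quantity—to be the crux; verifying Assumption~\ref{A:p0} through the non-optimality hypothesis is the other point needing care, while the closing algebra is routine.
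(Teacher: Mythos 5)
Your proof is correct, and although it rests on exactly the same two convexity facts as the paper's proof --- in your notation, $h_n(x_n)\ge h_n(0)=0$ and $h_n(y)\le h_n(x_n)$ --- it assembles them along a genuinely different route. The paper manipulates the ratio directly: it adds the nonnegative quantity $\sum_n \bigl(C_n(x^S_n)-C_n(x_n)-\alpha_n(x^S_n-x_n)\bigr)$ to the denominator of $\gamma(\bx)$, then adds the nonpositive quantity $\sum_n \bigl(C_n(x_n)-\alpha_n x_n\bigr)$ to both numerator and denominator, arriving at $\overline W(\bx)/\overline g(\bx^S)$, and finally identifies $\overline g(\bx^S)$ with the modified model's optimal welfare by asserting that $\bx^S$ remains socially optimal in the modified model. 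You instead apply $h_n(y_n)\le h_n(x_n)$ at \emph{every} feasible $\by$ and take suprema, obtaining the gap inequality $W^*-W(\bx)\le \overline W^*-\overline W(\bx)$ against the true modified optimum, wherever it may lie, after which the algebra $bA-aB\le (b-a)(A-a)\le 0$ closes the argument. What your assembly buys is robustness: the paper's identification step is in fact false in general (take $N=1$, $C_1(x)=x^2$, $p(q)=\max\{2-q,0\}$; the Cournot candidate is $x_1=1/2$ with $\alpha_1=1$ and $x^S=2/3$, yet the modified model's optimum is at total supply $1$, not $2/3$), so the paper's displayed formula for $\overline\gamma(\bx)$ is really only an upper bound on it --- which is all its chain of inequalities needs, but the paper also uses the identification to conclude that the denominator $\overline g(\bx^S)$ is positive. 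Your sup-based comparison sidesteps both issues at once: it compares against the genuine modified optimum automatically, and it yields $\overline g(\bx^S)\ge W^*-W(\bx)+\overline W(\bx)>0$ as a byproduct. Your first stage (verification of Assumptions \ref{A:cost}--\ref{A:p0} and of the Cournot-candidate property in the modified model, with the non-optimality hypothesis invoked precisely to establish Assumption \ref{A:p0}) coincides with the paper's, up to the cosmetic difference that you argue via $p'(X)<0$ while the paper argues via $\alpha_n<p(X)$ for some $n$ with $x_n>0$.
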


If $\textbf{x}$ is a Cournot equilibrium, then it satisfies Eqs.\
(\ref{equa:nece1})-(\ref{eq:necsec}), and therefore is a Cournot
candidate. Hence, Proposition \ref{Prop:linear} applies to all
Cournot equilibria {{that} are not socially optimal. {We} note that
if {a} Cournot candidate $\mathbf{x}$ is socially optimal {for} the
original {model, then the optimal social welfare in the modified
model could be zero, in which case $\gamma(\bx)=1$, but $\overline
\gamma(\mathbf{x})$ is undefined; see the example that follows.}

\begin{example}\label{example:linear}
{\rm Consider a  model involving two suppliers ($N=2$). The cost
function of supplier $n$ is $C_n(x)=x^2$, for $n=1,2$.
 The inverse demand function is constant, {with}  $p(q)=1$ for
 any $q \ge 0$. It is not hard to see that the vector $(1/2,1/2)$
 is a Cournot candidate, which is also socially optimal.
In the modified {model}, we have $\overline C_n(x)=x$, for $n=1,2$.
The optimal social welfare achieved in the modified model is zero.}
\Halmos
\end{example}

{Note that even if $\bx$ is a Cournot equilibrium in the original
{model}, it {need} not be a Cournot equilibrium in the modified
{model}  with linear cost functions, as {illustrated by our next}
example. On the other hand, Proposition \ref{Prop:linear} asserts
that a Cournot candidate {in} the original {model} remains a Cournot
candidate in the modified {model}. Hence, to {lower} bound the
efficiency of a Cournot equilibrium in the original {model, it
suffices to lower bound the efficiency} achieved at {a} worst
Cournot candidate for a modified model.} {Accordingly, and for the
purpose of deriving lower bounds, we can (and will)  restrict to the
case of linear cost functions, and study the worst case efficiency
over all Cournot candidates.}

\begin{example}\label{example:candidate}
{\rm{Consider a  model involving only one supplier ($N=1$). The cost
function of the supplier is $C_{{1}}(x)=x^2$.
 The inverse demand function is given by}}
$$
p(q) = \left\{ \begin{array}{ll}
  -q+4,& {\rm if}\ \ 0 \le q \le 4/3, \\
 \max\{0,- {\dfrac{1}{5}}(q-4/3)+8/3\}, \ \ \ \ \ \ & {\rm if}\ \ 4/3 < q,
 \end{array} \right.
$$
\rm{{which is convex and satisfies Assumption \ref{A:demand}. It can
be verified that $x_1=1$ maximizes the supplier's profit and thus is
a Cournot equilibrium in the original {model}. In the modified
{model}, $\overline C_1(\cdot)$ is linear with a slope of $2$; the
supplier can maximize its profit at $x_1=7/3$. Therefore, in the
modified {model}, $x_1=1$ remains a Cournot candidate, but not a
Cournot equilibrium.  }} \Halmos
\end{example}


\old{ {{Note that} a vector $\bx$ is a {\it Cournot candidate} (for
a given model) if it satisfies {the necessary conditions
(\ref{equa:nece1})-\eqref{eq:necsec}.} Then, Proposition
\ref{Prop:linear} can be interpreted as follows. For a given inverse
demand function $p(\cdot)$, the worst case efficiency over all
Cournot {candidates which are  not socially optimal} (where the
worst case is taken over all possible cost functions $C_n(\cdot)$
and all corresponding equilibria) can be lower bounded by the worst
case efficiency over all linear cost functions and associated
Cournot candidates. Accordingly, and for the purpose of deriving
lower bounds, we can (and will)  restrict to the case of linear cost
functions, and study the worst case efficiency over all Cournot
candidates.}}

\subsection{Other properties of Cournot {candidates}}\label{sec:property3}

{In this subsection, we collect}\old{We finally note} a few {useful
and} intuitive  properties of Cournot {candidates}.  We \old{also}
show that at a Cournot candidate there are two possibilities: either
{$p(X)>p(X^S)$ and $X<X^S$,  or $p(X)=p(X^S)$ (Proposition
\ref{Prop:less}); {in the latter} case, under the additional
assumption that $p(\cdot)$ is convex, {a} Cournot {candidate} is
socially optimal (Proposition \ref{Prop:equal}). In {either case,}
imperfect competition can never result in a price that is less than
the socially optimal price.}

\begin{proposition}\label{Prop:less}
Suppose that Assumptions \ref{A:cost}-\ref{A:p0} hold. Let ${\bx}$
and ${\bx}^S$ be a Cournot {candidate} and {an optimal} solution to
(\ref{equa:optimal}), respectively. If $p(X)\ne p(X^S)$, then
{$p(X)>p(X^S)$ and} \(
 X <  X^S
\).
\end{proposition}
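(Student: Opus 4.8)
The plan is to reduce everything to the single price inequality $p(X)>p(X^S)$. Since $p(\cdot)$ is nonincreasing (Assumption \ref{A:demand}), once $p(X)>p(X^S)$ is established the conclusion $X<X^S$ follows immediately: if we had $X\ge X^S$, monotonicity would give $p(X)\le p(X^S)$, a contradiction. So, given the hypothesis $p(X)\neq p(X^S)$, it suffices to rule out the remaining possibility $p(X)<p(X^S)$, which I would do by contradiction.

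The heart of the argument is a componentwise comparison of the Cournot candidate $\bx$ with the social optimum $\bx^S$. First I would extract from the Cournot candidate condition \eqref{equa:nece1} the simpler inequality $C'_n(x_n)\le p(X)$ for every active supplier (every $n$ with $x_n>0$); this holds because $\partial_- p(X)\le 0$ (as $p$ is nonincreasing) and $x_n>0$, so the extra term $x_n\,\partial_- p(X)$ is nonpositive. On the social side I would use the exact optimality conditions \eqref{equa:optimality}, namely $C'_n(x_n^S)=p(X^S)$ when $x_n^S>0$ and $C'_n(0)\ge p(X^S)$ when $x_n^S=0$. Now assume, for contradiction, that $p(X)<p(X^S)$, and fix any supplier $n$. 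If $x_n=0$ then trivially $x_n\le x_n^S$. If $x_n>0$, then $C'_n(x_n)\le p(X)<p(X^S)$; comparing with the optimality condition for the same supplier and using the fact that $C'_n$ is nondecreasing (by convexity of $C_n$, Assumption \ref{A:cost}), I would conclude $x_n\le x_n^S$ in the subcase $x_n^S>0$, since $C'_n(x_n^S)=p(X^S)>C'_n(x_n)$ would otherwise be violated. The subcase $x_n>0,\ x_n^S=0$ is impossible, because it forces the contradictory chain $C'_n(0)\ge p(X^S)>p(X)\ge C'_n(x_n)\ge C'_n(0)$.

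Summing the inequalities $x_n\le x_n^S$ over all $n$ yields $X\le X^S$. But $p(X)<p(X^S)$ together with monotonicity of $p$ forces $X>X^S$, a contradiction. This rules out $p(X)<p(X^S)$, and combined with the hypothesis $p(X)\neq p(X^S)$ we obtain $p(X)>p(X^S)$, whence $X<X^S$ as noted above. I expect the only delicate part to be the boundary bookkeeping in the componentwise comparison---specifically, correctly ruling out a supplier that is active at the Cournot candidate but idle at the social optimum, and taking care to invoke only the (weak) monotonicity of $p$ rather than strict monotonicity when passing between price and quantity inequalities. Note that convexity of $p(\cdot)$ is nowhere required, consistent with the fact that the proposition is stated under Assumptions \ref{A:cost}--\ref{A:p0} alone.
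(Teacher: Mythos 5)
Your proof is correct and takes essentially the same approach as the paper's: both rest on the componentwise comparison $C'_n(x_n)\le p(X)<p(X^S)\le C'_n(x_n^S)$, obtained from the Cournot conditions \eqref{equa:nece1}, the optimality conditions \eqref{equa:optimality}, and the monotonicity of $p(\cdot)$ and of $C'_n(\cdot)$, arranged as a proof by contradiction. The only cosmetic difference is which statement is negated: you assume $p(X)<p(X^S)$ while the paper assumes $X\ge X^S$; under the hypothesis $p(X)\ne p(X^S)$ and the monotonicity of $p(\cdot)$ these two contradiction hypotheses are interchangeable, so the arguments coincide.
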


{For the case where $p(X)=p(X^S)$, Proposition \ref{Prop:less} does
not provide any comparison between $X$ and $X^S$. While one usually
has $X<X^S$ (imperfect competition results in lower quantities), it
is also possible that $X > X^S$, as in the following example.}

\begin{example}\label{example:less}
{\rm Consider a {model involving} two suppliers {($N=2$).} The cost
function of each supplier is linear, with slope {equal to} $1$. The
inverse demand function is {convex,} of the form
$$
p(q) = \left\{ \begin{array}{l}
  2-q,\;\;\;\; \;\;\;\; {\rm if}\ \ 0 \le q \le 1, \\
 1, \;\;\;\;\;\;\;\;\;\;\;\;\;\;\;\; {\rm if}\ \ 1 < q .
 \end{array} \right.
$$
It is not hard to see that any nonnegative vector $\textbf{x}^S$
 that satisfies $x^S_1+x^S_2 \ge 1$ is socially optimal;
$x^S_1=x^S_2=1/2$ is one such vector. On the other hand, it can be
verified that $x_1=x_2=1$ is a Cournot equilibrium. Hence, in this
example, $2=X > X^S =1$. }  \Halmos
\end{example}

\old{
\begin{remark}\label{re:less}
{\rm Since the inverse demand function is nonincreasing, Proposition
\ref{Prop:less} shows that imperfect competition  raises the
equilibrium price above marginal cost, i.e., that the price at a
Cournot equilibrium cannot be less than the price at a socially
optimal point. }
\end{remark}
}

\begin{proposition}\label{Prop:equal}
 Suppose that Assumptions \ref{A:cost}-\ref{A:p0} hold and {that} the inverse demand function is convex.
 Let ${\bx}$ and ${\bx}^S$ be a Cournot
{candidate} and {an optimal} solution to (\ref{equa:optimal}),
respectively. If $p(X)=p(X^S)$, then $p'(X)=0$ and $\gamma(\bx)=1$.
\end{proposition}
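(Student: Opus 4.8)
The plan is to show first that $p'(X)=0$; the equality $\gamma(\bx)=1$ will then follow almost immediately. Since $\bx$ is a Cournot candidate, Proposition \ref{Prop:trivial} gives $X>0$, and since $p(\cdot)$ is convex, Proposition \ref{Prop:derivative} guarantees that $p(\cdot)$ is differentiable at $X$; because $p(\cdot)$ is nonincreasing we have $p'(X)\le 0$. The useful observation is that once we know $p'(X)=0$, the refined Cournot candidate conditions \eqref{equa:nece} reduce to $C'_n(x_n)=p(X)$ whenever $x_n>0$ and $C'_n(0)\ge p(X)$ whenever $x_n=0$, which are exactly the necessary and sufficient optimality conditions \eqref{equa:optimality} for $\bx$, with $X$ playing the role of $X^S$. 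Hence $\bx$ is itself socially optimal, its social welfare equals the optimal social welfare, and $\gamma(\bx)=1$.

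It remains to rule out $p'(X)<0$, which I would do by contradiction, arguing that every active supplier produces strictly less at $\bx$ than at the social optimum $\bx^S$. Assume $p'(X)<0$ and fix $n$ with $x_n>0$. The Cournot candidate condition gives $C'_n(x_n)=p(X)+x_n p'(X)<p(X)$, the inequality being strict because $x_n>0$. This first excludes $x^S_n=0$: if $x^S_n=0$, then \eqref{equa:optimality} gives $C'_n(0)\ge p(X^S)=p(X)$, while convexity of $C_n$ gives $C'_n(x_n)\ge C'_n(0)\ge p(X)$, contradicting $C'_n(x_n)<p(X)$. Hence $x^S_n>0$, so \eqref{equa:optimality} yields $C'_n(x^S_n)=p(X^S)=p(X)$, where I use the hypothesis $p(X)=p(X^S)$. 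Thus $C'_n(x^S_n)=p(X)>C'_n(x_n)$, and since $C'_n(\cdot)$ is nondecreasing (as $C_n$ is convex), this forces $x^S_n>x_n$. For the inactive suppliers ($x_n=0$) we trivially have $x^S_n\ge x_n$, and since $X>0$ at least one supplier is active, so summing over $n$ gives $X^S>X$.

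Finally I would close the loop using the shape of $p(\cdot)$: we now have $X^S>X$ together with $p(X^S)=p(X)$, and since $p(\cdot)$ is nonincreasing it must be constant on $[X,X^S]$. Then the right derivative of $p$ at $X$ is zero, and by the differentiability established above $p'(X)=\partial_+p(X)=0$, contradicting $p'(X)<0$. This contradiction shows $p'(X)=0$, completing the argument. I expect the middle step to be the main obstacle: since Proposition \ref{Prop:less} says nothing in the regime $p(X)=p(X^S)$, and in particular does not exclude $X^S=X$ or even $X>X^S$ (cf.\ Example \ref{example:less}), one cannot compare aggregate quantities at the outset. The supplier-by-supplier comparison driven by the assumption $p'(X)<0$ is precisely what produces the strict inequality $X^S>X$ needed to contradict the convexity and monotonicity of $p(\cdot)$.
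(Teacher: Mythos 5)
Your proof is correct and follows essentially the same route as the paper: assume $p'(X)<0$, use the Cournot candidate conditions \eqref{equa:nece} together with the hypothesis $p(X)=p(X^S)$ to show every active supplier satisfies $x_n<x^S_n$, conclude $X<X^S$, derive a contradiction with $p(X)=p(X^S)$ via the monotonicity of $p(\cdot)$, and then observe that $p'(X)=0$ turns \eqref{equa:nece} into the optimality conditions \eqref{equa:optimality}. The only cosmetic differences are the order of the two halves and the final contradiction (you deduce $p$ is constant on $[X,X^S]$, hence $\partial_+p(X)=0$, whereas the paper deduces $p(X)>p(X^S)$ directly); both are equally valid.
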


{Proposition \ref{Prop:sameprice} {shows} that all social optima
lead to a unique ``socially optimal'' price. {Combining with}
Proposition \ref{Prop:equal}, we conclude that if $p(\cdot)$ is
convex, a Cournot candidate is socially optimal if and only if it
{results in} the socially optimal price.}

\subsection{Concave inverse demand functions}\label{sec:concave}
{In this {section, we argue} that the case of concave inverse demand
functions is fundamentally different. For this reason, the study of
the concave case would require a very different line of analysis,
and is not considered further in this paper.}

{According to} Proposition \ref{Prop:equal}, {if the inverse demand
function is convex and} if the price at a Cournot equilibrium equals
{the price} at a socially optimal point, then the Cournot
equilibrium is socially optimal. {For nonconvex inverse demand
functions, this is not necessarily true: a socially optimal
 price can be associated with a socially suboptimal
Cournot equilibrium,} as demonstrated by the following example.

\begin{example}\label{example:concave1}   {\rm
Consider {a model involving two suppliers ($N = 2$), with}
$C_1(x)=x$  and $C_2(x)=x^2$.
 The inverse demand function is  {concave}  {on the interval where it is
 positive},  {of the form}
$$
p(q) = \left\{ \begin{array}{ll}
  1,& {\rm if}\ \ 0 \le q \le 1, \\
 \max\{0, -M(q-1)+1 \}, \ \ \ \ \ & {\rm if}\ \ 1 < q,
 \end{array} \right.
$$
where $M>2$. It is not hard to see that the vector $(0.5,0.5)$
satisfies the optimality conditions in (\ref{equa:optimality}), and
is therefore socially optimal. We now argue that $(1/M,1-1/M)$ is a
Cournot equilibrium. Given the action {$x_2=1/M$} of supplier $2$,
any action on the interval $[0,1-1/M]$ is a best response for
supplier $1$. Given the action {$x_1=1-(1/M)$} of supplier $1$, a
simple calculation shows that
$$
\argmax_{x \in [0, \infty)} \left\{ x \cdot p(x+1-1/M) -x^2 \right\}
=1/M.
$$
Hence, $(1/M,1-1/M)$ is a Cournot equilibrium. Note that $X=X^S=1$.
However, the optimal social welfare is $0.25$, while the social
welfare achieved at the Cournot equilibrium is $1/M-1/M^2$. By
considering arbitrarily large $M$, the corresponding efficiency can
be made arbitrarily small.} \Halmos
\end{example}

{The preceding example shows that arbitrarily high efficiency losses
are possible, even if $X=X^S$.} {The possibility of inefficient
allocations even when the price is the correct one opens up the
possibility of substantial inefficiencies that are hard to bound. }

\section{Affine Inverse Demand Functions}\label{sec:affine}

{We now turn our attention to the special case of}  affine inverse
demand functions. {It is already known from \cite{JT05} that 2/3 is
a tight lower bound on the efficiency of Cournot equilibria. In this
section, we refine this result by providing a tighter lower bound,
based on a small amount of ex post information about a Cournot
equilibrium.}

{Throughout this section, we assume an inverse demand function of
the form}
\begin{equation}\label{equa:affine}
p(q) = \left\{ \begin{array}{lll}
  b- aq,&& {\rm if}\ \ 0 \le q \le {b}/{a}, \\
 0, &&{\rm if}\ \ {b}/{a} <q , \\
 \end{array} \right.
 \end{equation}
where $a$ and $b$ are positive constants.\footnote{Note that the
model considered here is slightly different from that in
\cite{JT05}. In {that} work, the inverse demand function {is
literally affine and} approaches minus infinity as the total supply
increases to infinity. However, as remarked in that paper (p.\ 20),
this difference does not affect the results.} Under the assumption
of convex costs (Assumption \ref{A:cost}), a Cournot equilibrium is
guaranteed to exist, by Proposition \ref{Prop:Novshek85}.
\old{Furthermore, by Proposition \ref{Prop:condition}, a nonnegative
vector $\bx$ {such that $X \le b/a$} is a Cournot equilibrium if and
only if it is a Cournot candidate, i.e., if for all $n$,
\begin{equation}\label{eq:aff-opt}
\begin{array}{lcl}
 {C'_n}({x_n}) = p\left( {{X} } \right) - a{x_n},&\qquad&{\rm{if}}\;{x_n} > 0,\\
 {C'_n}(0) \ge p\left( {{X} } \right),&& {\rm if\; x_n=0.}
 \end{array}
 \end{equation}}

{The main result of this section follows.}

\begin{theorem}\label{thm:linear}
 Suppose that Assumption \ref{A:cost} holds {(convex cost functions),}
 and that the
inverse demand function is {affine, of the form
\eqref{equa:affine}.\footnote{{Note that Assumptions \ref{A:demand}
and \ref{A:optimal} hold automatically.}} Suppose also that} $b >
\min_n\{C'_n(0)\}$ (Assumption \ref{A:p0}). {Let $\bx$ be a Cournot
equilibrium, and} let $\alpha_n=C'_n(x_n)$. {Let also}
$$
\beta = \dfrac{a {X}} {b-\min_n\{\alpha_n\}},
$$
{If $X>b/a$, then $\mathbf{x}$ is socially optimal. Otherwise:}
\begin{itemize}
\item[(a)]
{We have}
 $ 1/2 \le \beta < 1$.
\item[(b)] The efficiency of $\bx$
 satisfies,
$$
\gamma(\bx) \ge g(\beta) = 3 \beta^2 - 4 \beta +2.
$$
\item[(c)]
{The bound in part (b)} is tight. That is, for every
$\beta\in[{1}/{2},1)$ {and every $\epsilon>0$,} there exists a
{model} {with a} Cournot equilibrium {whose efficiency} is {no more
than} $g(\beta){+\epsilon}$.
\item[(d)] The function  $g(\beta)$ is minimized at $ \beta =2/3$ {and} the worst case efficiency is $2/3$.
\end{itemize}

\end{theorem}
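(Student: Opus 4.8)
The plan is to collapse the whole statement onto a single explicit calculation by first reducing to linear costs and then identifying $\beta$ with a quantity ratio. By Proposition \ref{Prop:linear} we may replace each $C_n$ by the linear cost $x\mapsto\alpha_n x$, where $\alpha_n=C'_n(x_n)$: this keeps $\bx$ a Cournot candidate and can only lower its efficiency, so it suffices to prove (b) in the linear-cost model, where everything becomes computable. (If $\bx$ happens to be socially optimal then $\gamma(\bx)=1\ge g(\beta)$ trivially, so we may assume it is not, which is exactly the interior case $X<b/a$.) For affine demand the conditions \eqref{equa:nece} read $\alpha_n=(b-aX)-a x_n$ when $x_n>0$ and $\alpha_n\ge b-aX$ when $x_n=0$. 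Writing $S=\{n:x_n>0\}$ and $c=\min_n\alpha_n$, I would first note that $X>0$ (Proposition \ref{Prop:trivial}), that every inactive supplier has $\alpha_n\ge p(X)$ while every active one has $\alpha_n<p(X)$, and hence the cheapest supplier $k$ is active, with $a x_k=p(X)-c$. Since the linear-cost social optimum concentrates production on the cheapest supplier at price $c$, we get $X^S=(b-c)/a$, so that $\beta=X/X^S$.

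For part (a), the inequality $\beta<1$ is immediate: if $\beta\ge 1$ then $p(X)=b-aX\le c=\alpha_k$, yet $\alpha_k=p(X)-a x_k<p(X)$ because $x_k>0$, a contradiction. The inequality $\beta\ge 1/2$ is the one genuinely new fact: from $a x_k=p(X)-c=(b-c)-aX$ together with the trivial bound $x_k\le X$ one obtains $b-c\le 2aX$, i.e. $\beta\ge 1/2$, with equality precisely when the cheapest supplier produces the entire output.

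For part (b) I would compute the two welfares in closed form. Substituting $\alpha_n=p(X)-a x_n$ into the objective gives the equilibrium welfare $W(\bx)=\frac{a}{2}X^2+a\sum_{n\in S}x_n^2$, while the optimal welfare is $W^S=\frac{a}{2}(X^S)^2$; hence $\gamma(\bx)=\beta^2+2\sum_{n\in S}x_n^2/(X^S)^2$. Because $x_k=(1-\beta)X^S$ is pinned down by $\beta$ and $\sum_{n\in S}x_n^2\ge x_k^2$, this yields $\gamma(\bx)\ge \beta^2+2(1-\beta)^2=3\beta^2-4\beta+2=g(\beta)$. Part (d) is then just calculus: $g$ is a convex parabola with $g'(\beta)=6\beta-4$, minimized at $\beta=2/3\in[1/2,1)$ with $g(2/3)=2/3$.

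The main obstacle is part (c), tightness, which requires reverse-engineering the inequality $\sum_{n\in S}x_n^2\ge x_k^2$. Fixing $\beta$, I would take one cheap supplier producing $x_k=(1-\beta)X^S$ and $J$ identical marginal suppliers sharing the remaining mass $(2\beta-1)X^S$ (this is exactly where $\beta\ge 1/2$ is needed for feasibility), choosing their marginal costs so that \eqref{equa:nece} holds with equality. Since affine demand with linear costs makes each $\pi_n$ strictly concave in $x_n$, the resulting Cournot candidate is a genuine Cournot equilibrium; and as $J\to\infty$ the term $\sum_{n\ne k}x_n^2\to 0$, so $\gamma\to g(\beta)$, giving efficiency at most $g(\beta)+\epsilon$ for $J$ large. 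The points that need care are checking that the constructed marginal costs are nonnegative and consistent with $c=\min_n\alpha_n$ (which reduces to $x_n\le x_k$, automatic for large $J$) and that the equilibrium remains in the interior $X<b/a$.
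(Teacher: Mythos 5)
Your proposal is, in substance, the paper's own proof: the same reduction to linear costs via Proposition \ref{Prop:linear}, the same identification $X^S=(b-c)/a$ and $x_k=X^S-X$ for the cheapest (necessarily active) supplier, the same inequality (dropping $\sum_{n\neq k}x_n^2\ge 0$) to get $g(\beta)$, and the identical tightness construction (one cheap supplier plus $J$ identical marginal suppliers with $J\to\infty$). The one genuine stylistic improvement is that you treat $c=0$ and $c>0$ uniformly through the exact formula $\gamma(\bx)=\beta^2+2\sum_{n\in S}x_n^2/(X^S)^2$, whereas the paper runs the computation twice (cases $\alpha_1=0$ and $\alpha_1>0$); your formula also makes the tightness argument transparent, since it exhibits $\sum_{n\neq k}x_n^2$ as the exact efficiency surplus over $g(\beta)$.

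There is, however, one step you assert rather than prove, and it is part of the theorem's statement: the dichotomy in your parenthetical remark, namely that ``$\bx$ socially optimal'' is ``exactly the interior case $X<b/a$.'' This hides three claims: (i) if $X>b/a$ then $\bx$ is socially optimal (the theorem's first assertion); (ii) no Cournot equilibrium has $X=b/a$; and (iii) if $X<b/a$ then $\bx$ is \emph{not} socially optimal, which is what licenses the application of Proposition \ref{Prop:linear} (that proposition explicitly requires a Cournot candidate that is not socially optimal, since otherwise the modified model's efficiency can be undefined, cf.\ Example \ref{example:linear}). The paper fills these in as follows: for (i), when $X>b/a$ one has $p(X)=p'(X)=0$, so the equilibrium conditions \eqref{equa:nece} collapse into the optimality conditions \eqref{equa:optimality}; for (ii), $p(\cdot)$ is not differentiable at $b/a$, so Proposition \ref{Prop:derivative} rules out $X=b/a$; for (iii), when $X<b/a$ one has $p'(X)=-a\neq 0$, so Proposition \ref{Prop:equal} (contrapositive) gives $p(X)\neq p(X^S)$, and then Proposition \ref{Prop:sameprice} implies $\bx$ is not socially optimal. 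All three are short, but without (iii) your invocation of Proposition \ref{Prop:linear} is not justified, and without (i)--(ii) the theorem's opening claim and the boundary case are simply missing. A second, more cosmetic, imprecision: in the tightness argument, the payoff $\pi_n$ is not globally concave in $x_n$ because of the kink in $p(q)=\max\{b-aq,0\}$; it is concave only on $[0,(b-aX_{-n})/a]$ and nonincreasing beyond, which still makes the first-order conditions sufficient, but the one-line appeal to strict concavity should be stated that way.
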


Theorem \ref{thm:linear} is proved in Appendix \hyperlink{B1}{B.1}.
\old{because it involves the same line of argument as the proof of
Theorem \ref{thm:convex} in the next section.} {The  lower bound
$g(\beta)$ is illustrated in Fig.\ 1.} {Consider a Cournot
equilibrium such that $X \le b/a$.} {For the special case where all
the cost functions are linear, of the form $C_n(x_n)=\alpha_n$,
Theorem \ref{thm:linear} has an interesting interpretation. {We
first note} that a {socially} optimal solution is obtained when the
price $b-aq$ equals the marginal cost of a ``best'' supplier, namely
$\min_n \alpha _n$. In particular, $X^S=(b-\min_n \{\alpha_n\})/a$,
and $\beta=X/X^S$. Since $p'(X) =-a <0$, Proposition
\ref{Prop:equal} {implies} that $p(X) \ne p(X^S)$, and Proposition
\ref{Prop:less} implies that $\beta<1$. Theorem \ref{thm:linear}
{further} states that $\beta\geq 1/2$. i.e., that the total supply
at a Cournot equilibrium is at least half of the socially optimal
supply. Clearly, if $\beta$ is close to $1$ we expect \old{that} the
efficiency loss due to {the difference $X^S-X$} {to} be small.
However, efficiency losses may also arise if the total supply at a
Cournot equilibrium is not provided by the most efficient suppliers.
(As shown in Example \ref{example:concave1}, in the nonconvex case
this effect can be substantial.) Our result shows that, for the
convex case, $\beta$ can be used to lower bound the total efficiency
loss due to this second factor as well; when $\beta$ is close to
$1$, the efficiency indeed remains close to $1$. {(This is in sharp
contrast to the nonconvex case where we can have $X=X^S$ but large
efficiency losses.)} Somewhat surprisingly, the worst case
efficiency also tends to be somewhat better for low $\beta$, that
is, when $\beta$ approaches $1/2$, as compared to intermediate
values ($\beta\approx 2/3$).}

 \begin{figure}\label{Fig:linear}
    \includegraphics[width=10.5cm]{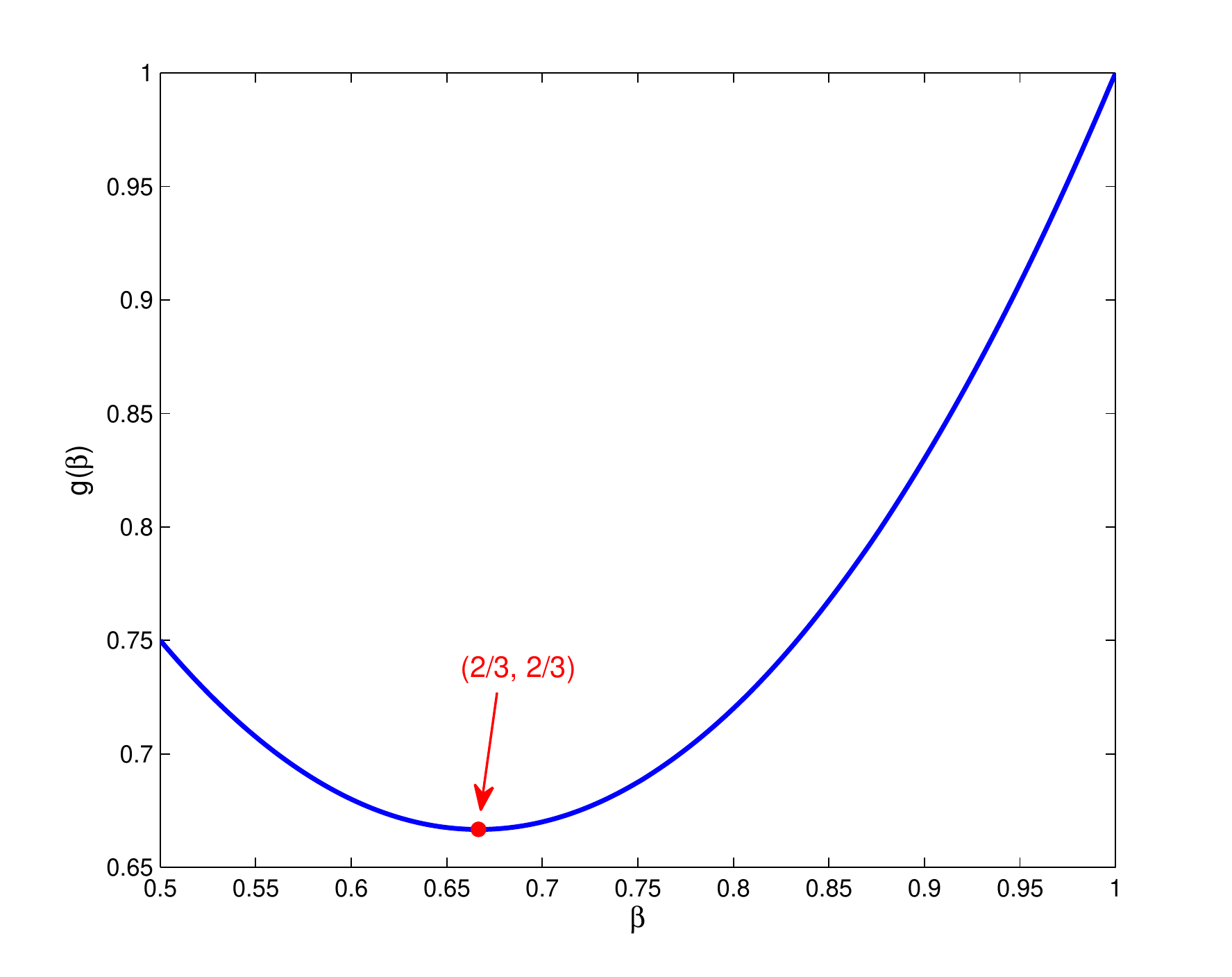}
    \centering
    \caption{A tight lower bound {on} the efficiency of  Cournot equilibria {for the case of} affine inverse demand functions.}
  \end{figure}

\section{Convex Inverse Demand
Functions}\label{sec:convex}

In this section we study the efficiency of Cournot equilibria {under
more general assumptions.} Instead of restricting the inverse demand
function to be affine,  we will {only} assume that it is convex. A
Cournot equilibrium {need} not exist in general, but it does exist
under some conditions (cf.\ {Section \ref{se:exist}}). Our results
apply whenever a Cournot equilibrium happens to exist.

We first show that a lower bound on the efficiency of a Cournot
equilibrium can be established by calculating its efficiency in
another {model} with a piecewise linear
 inverse demand function. Then, in Theorem \ref{thm:convex}, we establish a lower bound
on the efficiency of Cournot equilibria, as a function of the ratio
of the slope of the inverse demand function at the Cournot
equilibrium to the {average slope of the inverse demand function
between} the Cournot equilibrium and a socially optimal point. Then,
in Section \ref{sec:app}, we will apply Theorem \ref{thm:convex} to
{specific} convex inverse demand functions. \old{ and establish
nontrivial lower bounds on the efficiency of Cournot equilibria.}
{Recall our definition of a Cournot candidate as a vector $\bx$ that
satisfies the necessary conditions
(\ref{equa:nece1})-\eqref{eq:necsec}.}

\begin{proposition}\label{Prop:piecewise_linear}
Suppose that Assumptions \ref{A:cost}-\ref{A:p0} hold, and {that}
the inverse demand function is convex. Let ${\bx}$ and ${\bx}^S$ be
a Cournot
 {candidate} and an  {optimal} solution to (\ref{equa:optimal}), respectively.
Assume that $p(X) \ne p(X^S)$ and let\footnote{According to
Proposition \ref{Prop:derivative}, $p'(X)$ must exist.} $c =|p' (X)
|$. Consider  {a modified model} in which we  {replace} the inverse
demand function by a new function $p^0(\cdot)$, defined by

\begin{equation}\label{equa:piecewise_linear}
p^0(q) = \left\{ \begin{array}{lcl}
  - c(q - X) + p(X), && {\rm if}\ \ \  0 \le q \le X, \\[5pt]
 \max \left\{ {0, \dfrac{{p({X^S})   - p(X )}}{{{X^S} - X }}(q - X) + p(X)\;} \right\}, &&  {\rm if}\ \ \ X<q.
 \end{array} \right.
\end{equation}
Then, for the modified model, with inverse demand function
$p^0(\cdot)$, the vector $\bx^S$ remains socially optimal, and the
efficiency of $\bx$, denoted by $\gamma^0(\bx)$, satisfies
\[
\gamma^0 ({\bx}) \le \gamma ({\bx}).
\]
\end{proposition}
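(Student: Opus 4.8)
The plan is to compare the modified inverse demand function $p^0(\cdot)$ with the original $p(\cdot)$ using convexity, verify that the modified model still satisfies all of our standing assumptions, and then reduce the efficiency inequality to an elementary algebraic fact. First I would record the geometry of $p^0$. By Proposition~\ref{Prop:less}, the hypothesis $p(X)\neq p(X^S)$ forces $X<X^S$ and $p(X)>p(X^S)$, so the chord joining $(X,p(X))$ and $(X^S,p(X^S))$ has negative slope; by convexity $p'(X)$ is no larger than this slope, hence $c=|p'(X)|>0$ (recall $p'(X)$ exists by Proposition~\ref{Prop:derivative}). The upper branch of $p^0$ is the tangent line to $p$ at $X$, which by convexity lies below $p$, so $p^0(q)\le p(q)$ on $[0,X]$; the lower branch is the chord, which stays nonnegative on $[X,X^S]$ and so is unaffected by the truncation there, and which by convexity lies above $p$, so $p^0(q)\ge p(q)$ on $[X,X^S]$. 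A direct computation gives $p^0(X)=p(X)$ and $p^0(X^S)=p(X^S)$. Together with its piecewise-linear form these facts show that $p^0$ is continuous, nonincreasing, convex, and eventually zero, so it satisfies Assumptions~\ref{A:demand} and \ref{A:optimal}.

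Next I would verify Assumption~\ref{A:p0} for the modified model, which is what makes $\gamma^0(\bx)$ well defined and lets Proposition~\ref{Prop:positive} apply. Since $\bx$ is a Cournot candidate with $X>0$, some $x_n>0$, and the refined condition \eqref{equa:nece} gives $C'_n(x_n)=p(X)-c\,x_n$; using $C'_n(x_n)\ge C'_n(0)\ge\min_m C'_m(0)$ I would conclude $p(X)\ge\min_m C'_m(0)$, and therefore $p^0(0)=p(X)+cX>\min_m C'_m(0)$, because $c>0$ and $X>0$. With Assumptions~\ref{A:cost}--\ref{A:p0} in force for the modified model, the objective in the modified social problem is concave, so conditions \eqref{equa:optimality} (written with $p^0$) are necessary and sufficient there. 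Because $p^0(X^S)=p(X^S)$ and the cost functions are unchanged, the conditions satisfied by $\bx^S$ in the original model are exactly the conditions for $\bx^S$ in the modified model, proving that $\bx^S$ remains socially optimal.

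Finally I would reduce the efficiency comparison to algebra. Writing $\Delta_1=\int_0^X\bigl(p(q)-p^0(q)\bigr)\,dq\ge0$ and $\Delta_2=\int_X^{X^S}\bigl(p^0(q)-p(q)\bigr)\,dq\ge0$, and using that the cost terms are identical, I get $W^0(\bx)=W(\bx)-\Delta_1$ and $W^0(\bx^S)=W(\bx^S)-\Delta_1+\Delta_2$, where $W$ and $W^0$ denote social welfare under $p$ and $p^0$. The denominators $W(\bx^S)$ and $W^0(\bx^S)$ are positive by Proposition~\ref{Prop:positive} applied to the two models, $W(\bx)>0$ by the same proposition, and $W(\bx)\le W(\bx^S)$ since $\bx^S$ is socially optimal. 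Cross-multiplying the target inequality $W^0(\bx)/W^0(\bx^S)\le W(\bx)/W(\bx^S)$ (legitimate since both denominators are positive) and simplifying, it becomes $\Delta_1\bigl(W(\bx)-W(\bx^S)\bigr)\le W(\bx)\,\Delta_2$, whose left side is nonpositive and right side nonnegative; this gives the claim.

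I expect the only genuinely delicate point to be the verification of Assumption~\ref{A:p0} for the modified model, i.e., establishing both $c>0$ and $p(X)\ge\min_m C'_m(0)$, since this is precisely what guarantees positivity of the denominators and legitimizes the cross-multiplication; the remaining inequalities follow routinely from convexity (tangent below, chord above).
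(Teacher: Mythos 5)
Your proof is correct and follows essentially the same route as the paper: you replace $p(\cdot)$ by the tangent line on $[0,X]$ and the (truncated) chord on $[X,\infty)$, observe that the social-optimality conditions \eqref{equa:optimality} involve only the value $p(X^S)$, which is unchanged, and then compare the two welfare ratios through the two nonnegative areas $\Delta_1=\int_0^X (p(q)-p^0(q))\,dq$ and $\Delta_2=\int_X^{X^S}(p^0(q)-p(q))\,dq$, which are exactly the quantities $D$ and $C$ in the paper's proof. Two execution details differ, both in your favor. First, the paper finishes with a two-step fraction manipulation (add $D$ to numerator and denominator, then drop $C$ from the denominator), which requires a separate case split when the numerator of $\gamma^0(\bx)$ is nonpositive; your single cross-multiplication, which reduces the claim to $\Delta_1\bigl(W(\bx)-W(\bx^S)\bigr)\le \Delta_2\,W(\bx)$, needs only the positivity of the two denominators and handles both cases at once. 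Second, the paper asserts that the denominator of $\gamma^0(\bx)$ is positive by citing Proposition \ref{Prop:positive}, but that proposition applies to the modified model only if Assumptions \ref{A:cost}--\ref{A:p0} hold there; your explicit verification --- in particular that $c>0$ (the tangent slope is at least as steep as the chord's, which is strictly negative by Proposition \ref{Prop:less}) and that $p^0(0)=p(X)+cX>\min_m C'_m(0)$ via \eqref{equa:nece} and Proposition \ref{Prop:trivial} --- supplies precisely the step the paper leaves implicit, and is the one genuinely delicate point of the argument, as you correctly anticipated.
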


 \begin{figure}\label{Fig:piecewise_linear}
    \includegraphics[width=10.5cm]{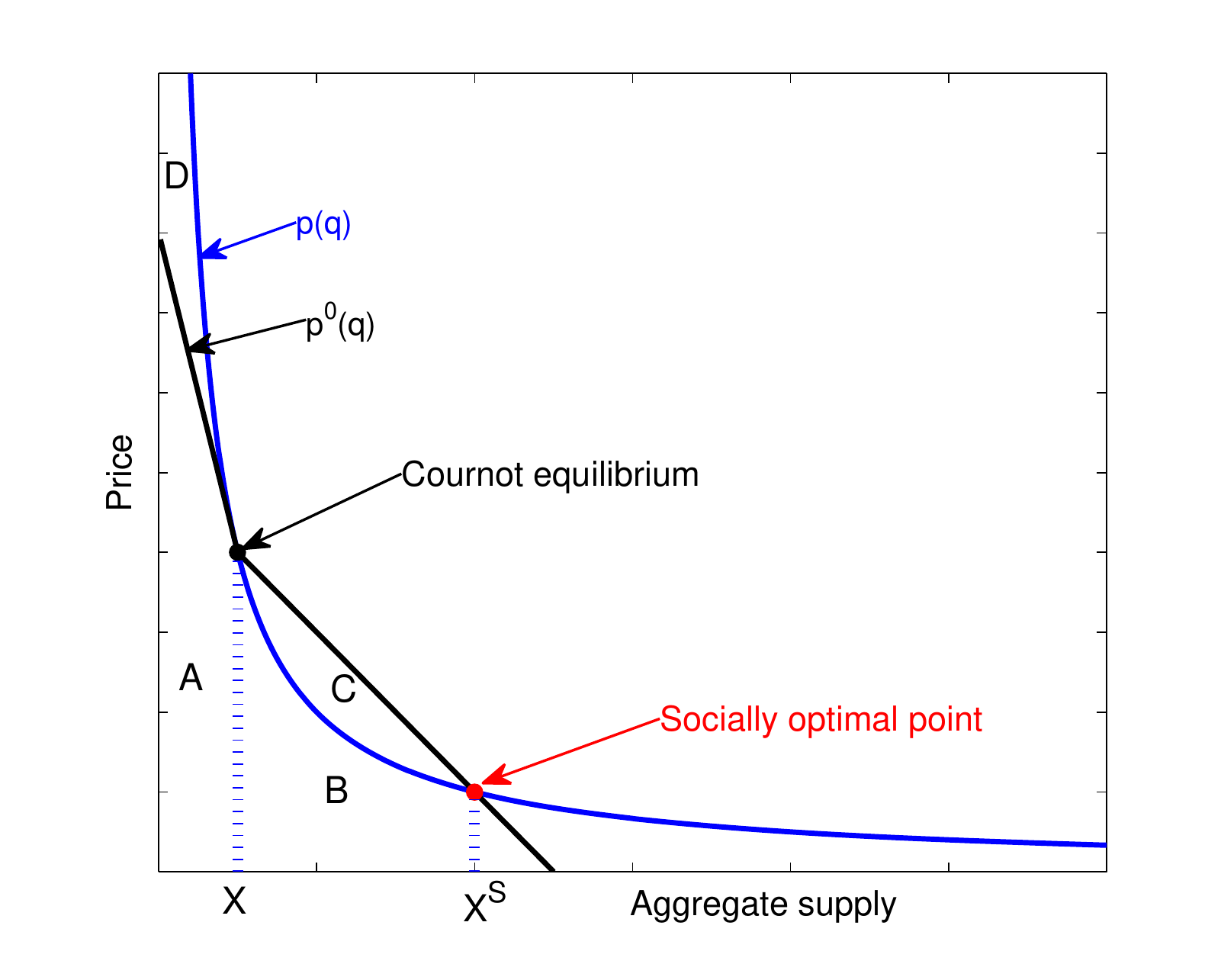}
    \centering
    \caption{The efficiency of a Cournot equilibrium cannot increase if we replace the inverse demand {function} by the piecewise linear function $p^0(\cdot)$.
    The function $p^0(\cdot)$ is tangent to the inverse demand {function}  $p(\cdot)$ at the equilibrium point, and connects the Cournot equilibrium point with the socially optimal
    point.}
  \end{figure}

\proof{Proof} Since $p(X) \ne p(X^S)$, Proposition \ref{Prop:less}
{implies} that $X<X^S$, so that $p^0(\cdot)$ is well defined.
\old{Furthermore, since $\bx$ is a Cournot candidate for the
original model, we have
$$
C'_n(x_n){\leq p\left( X \right)+x_n p'(X)} =p(X) - {x_n}c =
p^0\left( X \right) + {x_n} {\cdot \partial_- p^0}\left( X \right),
\;\;\;\;{\rm{if}}\;x_n
> 0.
$$
This proves that $\bx$ {satisfies the necessary conditions
\eqref{equa:nece1} } for the modified model.} \old{Furthermore,}
{Since}  the {necessary and sufficient} optimality conditions in
(\ref{equa:optimality}) only involve the value of the inverse demand
function at $X^S$, {which has been unchanged,} the
 vector $\textbf{x}^S$ remains socially optimal {for the modified model.}

{Let}
\[
{{{A}}} = \int_0^X {{p^0}(q)\, dq}, \; {{{B}}} = \int_X^{X^S}
{{p}(q)\, dq},\; {{{C}}} = \int_X^{X^S} { (p^0(q) - p (q))\, dq},\;
{{{D}}} = \int_0^{X} {  (p(q) - p^0 (q))\, dq}.
\]
{See Fig.\ 2 for an illustration of $p(\cdot)$ and a graphical
interpretation of $A$, $B$, $C$, $D$.} Note that since $p(\cdot)$ is
convex, we have $\emph{C} \ge 0$ and $D\geq 0$. The efficiency of
${\bx}$ in the original
 {model} with inverse demand function $p(\cdot)$, is
$$
0 <\gamma({\bx})=\dfrac{{{{A}}}+{{{D}}}-\sum\nolimits_{n = 1}^N
{{C_n}({x_n})} }{{{{A}}}+{{{B}}}+{{{D}}}-\sum\nolimits_{n = 1}^N
{{C_n}({x^S_n})}} \le 1,
$$
{where the first inequality is true because the social welfare
achieved at any Cournot candidate is positive  (Proposition
\ref{Prop:positive}).}
 The efficiency of $\textbf{x}$ in the {modified model} is
$$
\gamma^0({\bx})=\dfrac{{{{A}}}-\sum\nolimits_{n = 1}^N
{{C_n}({x_n})} }{{{{A}}}+{{{B}}}+{{{C}}}-\sum\nolimits_{n = 1}^N
{{C_n}({x^S_n})}}.
$$
{Note that the denominators in the above formulas for $\gamma(\bx)$
and $\gamma^0(\bx)$ are {all} positive, by Proposition~\ref{Prop:positive}.}

{If $ {{A}}-\sum\nolimits_{n = 1}^N {{C_n}({x_n})} \le 0$, then
$\gamma^0(\bx)\le0$ and the result is clearly true. We can therefore
assume that} $ {{A}}-\sum\nolimits_{n = 1}^N {{C_n}({x_n})} > 0. $
We then have
\begin{eqnarray*}
 0 &<& \gamma^0(\bx) = \dfrac{ A-\sum\limits_{n = 1}^N {{C_n}({x_n})}
}{A+B+C-\sum\limits_{n = 1}^N {{C_n}({x^S_n})}} \le
\dfrac{A+D-\sum\limits_{n = 1}^N {{C_n}({x_n})}
}{A+B+C+D-\sum\limits_{n = 1}^N {{C_n}({x^S_n})}}\\  &\le&
\dfrac{A+D-\sum\limits_{n = 1}^N {{C_n}({x_n})}
}{A+B+D-\sum\limits_{n = 1}^N {{C_n}({x^S_n})}} =\gamma(\bx)\le 1,
\end{eqnarray*}
which proves the desired result. \Halmos
\endproof}

 {Note that unless $p(\cdot)$ happens to be linear on
the interval $[X,X^S]$,  the function $p^0(\cdot)$ is not
differentiable at $X$ and, according to} Proposition
\ref{Prop:derivative}, $\textbf{x}$ cannot be a Cournot {candidate}
\old{for the Cournot competition} {for the modified model}\old{with
inverse demand function $p^0(\cdot)$}. {Nevertheless,} $p^0(\cdot)$
can {still} be used to derive a lower bound on the efficiency of
Cournot {candidates} {{in} the original model,} {as will be seen in
the proof of Theorem \ref{thm:convex}, which is our main result.}

\old{We are now ready for the main result of this section.}

\begin{figure}\label{Fig:convex}
    \includegraphics[width=10cm]{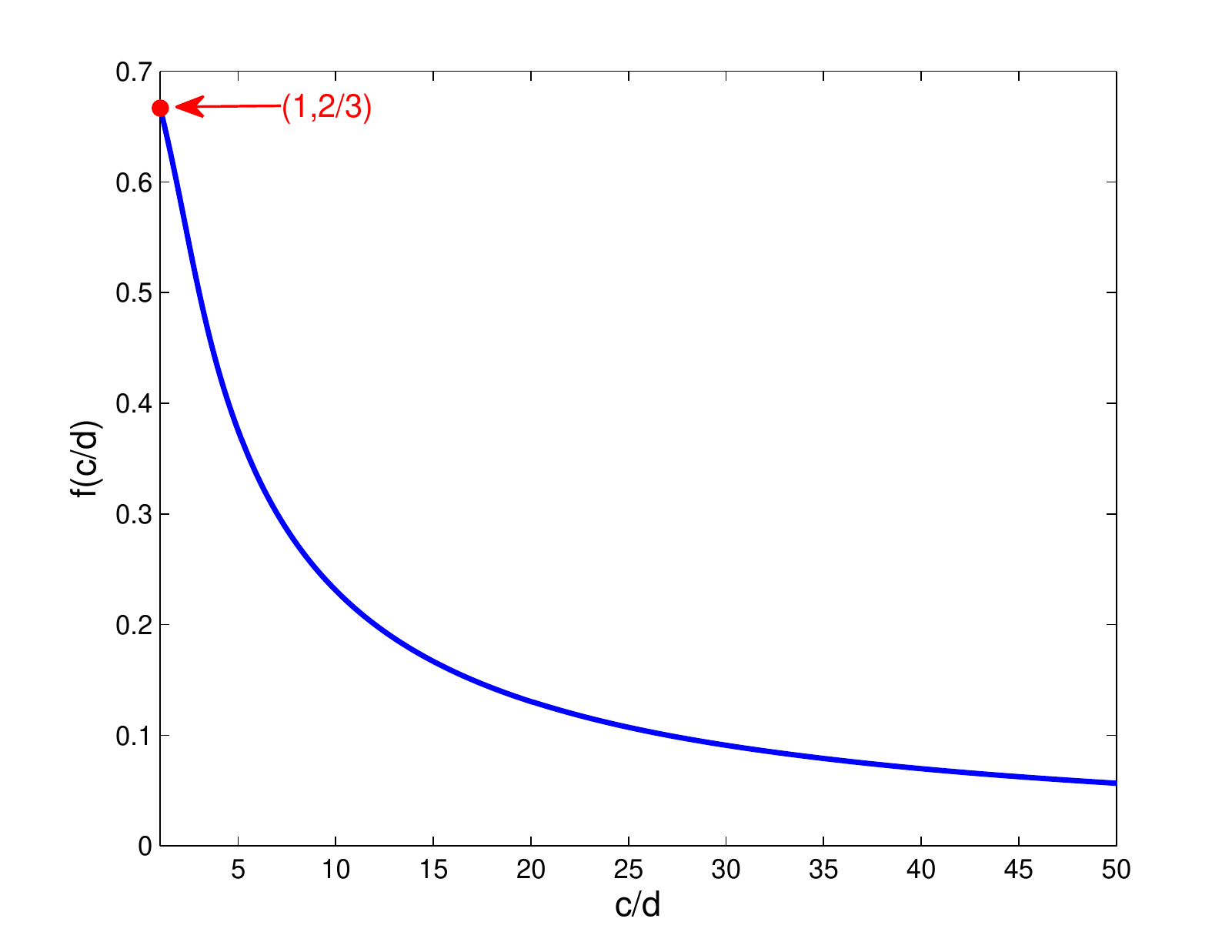}
    \centering
    \caption{Plot of the lower bound {on} the efficiency of a Cournot equilibrium in a Cournot oligopoly with convex inverse demand functions,
    {as a function of the ratio $c/d$ (cf. Theorem \ref{thm:convex}).}}
  \end{figure}

\begin{theorem}\label{thm:convex}
 Suppose that Assumptions \ref{A:cost}-\ref{A:p0} hold, and {that} the inverse demand function is convex. Let ${\bx}$ and ${\bx}^S$ be a
Cournot {candidate} and a solution to (\ref{equa:optimal}),
respectively. {Then, the following hold.}
\begin{enumerate}

\item[(a)] If $p(X)=p(X^S)$, then $\gamma({\bx})=1 $.

\item[(b)] {If} $p(X)\ne p(X^S)$, let
$c=|p'(X)|$, $d=|(p(X^S)-p(X))/(X^S-X)|$, and $\overline c= c/d$. We
have {$\overline c \ge 1$ and }
\begin{equation}\label{equa:thm_convex1}
1> \gamma({\bx}) \ge f(\overline c)=\dfrac{\phi^2 +2}{\phi^2+ 2
\phi+ \overline c} ,
\end{equation}
where
$$
{\phi = \max \left\{ {\frac{{2-\overline c   + \sqrt {{{\overline c
}^2} - 4\overline c  + 12} }}{2},1} \right\}.}
$$
\end{enumerate}
\end{theorem}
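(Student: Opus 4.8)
The plan is to dispose of part (a) immediately and reduce part (b) to a one‑parameter optimization solvable in closed form. Part (a) is exactly the conclusion of Proposition \ref{Prop:equal}. For part (b) I assume $p(X)\neq p(X^S)$; Proposition \ref{Prop:less} then gives $X<X^S$ and $p(X)>p(X^S)$, so $d$ is well defined. The inequality $\overline c\ge 1$ is pure convexity: since $p$ is convex and nonincreasing, $|p'|=-p'$ is nonincreasing, so the left‑endpoint slope magnitude $c=|p'(X)|$ dominates the secant‑slope magnitude $d=\frac{1}{X^S-X}\int_X^{X^S}(-p'(\xi))\,d\xi$ over $[X,X^S]$. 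The strict bound $\gamma(\bx)<1$ follows by contradiction: if $\gamma(\bx)=1$ then $\bx$ is socially optimal, whence Proposition \ref{Prop:sameprice} forces $p(X)=p(X^S)$, contrary to the case assumption.

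Next I would set up the two reductions. First, invoke Proposition \ref{Prop:linear} to replace each cost function by the linear function $\alpha_n x$ with $\alpha_n=C'_n(x_n)$; this keeps $\bx$ a Cournot candidate and can only lower the efficiency, so it suffices to prove the bound for linear costs. Then invoke Proposition \ref{Prop:piecewise_linear} to replace $p$ by the piecewise‑linear function $p^0$ of \eqref{equa:piecewise_linear}, whose pieces have slopes $-c$ on $[0,X]$ and $-d$ on $[X,X^S]$; this again only lowers the efficiency. It therefore suffices to lower bound the efficiency $\gamma^0(\bx)$ in the model with linear costs and inverse demand $p^0$.

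Now the computation. Write $P=p(X)$ and $P^S=p(X^S)$. For linear costs the social optimum concentrates all output on a cheapest supplier, so the optimal price equals $\min_n\alpha_n$; combined with the Cournot‑candidate relation $\alpha_n=P-cx_n$ (valid for active $n$ by \eqref{equa:nece}) this yields the key identity $\max_n x_n=d(X^S-X)/c$. Evaluating the trapezoidal areas under $p^0$ gives the numerator of $\gamma^0(\bx)$ as $\frac c2 X^2+c\sum_n x_n^2$ and the optimal social welfare as $\frac{c-d}{2}X^2+\frac d2 (X^S)^2$. Using $\sum_n x_n^2\ge(\max_n x_n)^2=d^2(X^S-X)^2/c^2$ and writing $r=(X^S-X)/X>0$, the whole estimate collapses to the scale‑invariant one‑variable bound $\gamma^0(\bx)\ge \frac{\overline c^{\,2}+2r^{2}}{\overline c^{\,2}+2\overline c\,r+\overline c\,r^{2}}$, in which the feasibility constraint $\max_n x_n\le X$ reads $0<r\le\overline c$.

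Finally, substituting $\phi=\overline c/r$ turns the right‑hand side into $\frac{\phi^2+2}{\phi^2+2\phi+\overline c}=f(\overline c)$, with the constraint now $\phi\ge 1$. Minimizing over $\phi\ge 1$ is routine: the first‑order condition is the quadratic $\phi^2+(\overline c-2)\phi-2=0$, whose positive root is precisely the $\phi$ in the statement, and when that root falls below $1$ (large $\overline c$) the minimum is attained at the boundary $\phi=1$, which is exactly the $\max\{\cdot,1\}$ appearing in $f$; as a check, $f(1)=2/3$, recovering the affine case of Theorem \ref{thm:linear}. I expect the main obstacle to be the clean composition of the two reductions rather than the optimization: linearizing the costs changes the social optimum and could alter the secant slope $d$, so one must verify that the ratio $\overline c$ governing the final bound is controlled by the original ratio (equivalently, that passing to linear costs does not increase $\overline c$), invoking the monotonicity of $f$. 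Pinning down the identity for $\max_n x_n$ and confirming that the social optimum lands on the slope‑$(-d)$ segment of $p^0$ are the precise points where this care is required; once the reduction is secured, the remaining minimization is elementary.
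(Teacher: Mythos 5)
Your skeleton is the paper's: part (a) from Proposition \ref{Prop:equal}, the claims $\overline c\ge 1$ and $\gamma(\bx)<1$ exactly as you argue, then the reduction to linear costs (Proposition \ref{Prop:linear}) followed by the piecewise-linear demand $p^0$ (Proposition \ref{Prop:piecewise_linear}), and a one-variable minimization whose interior critical point solves $\phi^2+(\overline c-2)\phi-2=0$. Your unified trapezoid computation (your $\phi=\overline c/r$ is the paper's $\overline y$) is correct \emph{within a model whose costs are linear to begin with}, and is in fact tidier than the paper's two-case split on $\min_n\alpha_n=0$ versus $\min_n\alpha_n>0$.

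The genuine gap is the composition of the two reductions, which you flag but then propose to close with a claim that is false. Your key identity $\max_n x_n=d(X^S-X)/c$ amounts to $\min_n\alpha_n=p(X^S)$, i.e., to the linearized model having the same socially optimal price as the original one; and your fallback claim --- that passing to linear costs cannot increase $\overline c$ --- fails as well. The paper's own Example \ref{example:candidate} is a counterexample: $N=1$, $C_1(x)=x^2$, Cournot point $x_1=1$, $\alpha_1=2$; originally $X^S=4/3$, $p(X^S)=8/3$, so $c=d=1$ and $\overline c=1$, but after linearization the optimum moves out to $14/3$ (where $p=2$), the secant slope falls to $3/11$, and the ratio becomes $11/3$. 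Your chain then delivers only $\gamma(\bx)\ge f(11/3)=9/20$, strictly weaker than the claimed $f(1)=2/3$. The correct repair --- which is what the paper's computation implicitly does when it writes $p^0(X^S)=1$ and $X^S=X+(y-1)/d$, thereby silently redefining $X^S$ --- is to keep $p^0$ built with the \emph{original} slopes $c$ and $d$, set $\alpha^*=\min_n\alpha_n$ and $z=p(X)-\alpha^*$, and use as denominator the quantity $\max_{Y\ge 0}\bigl\{\int_0^Y p^0(q)\,dq-\alpha^* Y\bigr\}$, attained at $Y^*=X+z/d$ on the slope-$(-d)$ segment. Because some supplier active at $\bx^S$ has $x_n\le x_n^S$, one has $\alpha^*\le p(X^S)$, hence this denominator dominates $\int_0^{X^S}p^0(q)\,dq-\sum_n\alpha_n x_n^S$, which is the quantity that the two reduction arguments (run explicitly on this fixed $p^0$, rather than quoted as black boxes) actually compare $\gamma(\bx)$ against. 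The Cournot conditions give $\max_n x_n=z/c\le X$, and your minimization over $\phi=cX/z\ge 1$ then goes through verbatim, now producing $f(\overline c)$ with the original $\overline c=c/d$.
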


\begin{remark}{\rm
We do not know whether the lower bound in Theorem \ref{thm:convex}
is tight. The difficulty in proving tightness is due to the fact
that the vector $\bx$ need not be a Cournot equilibrium in the
modified model.}
\end{remark}

We provide the proof of Theorem \ref{thm:convex} in Appendix
\hyperlink{B2}{B.2}. {Fig.\ 3 shows a plot of} the lower bound
{$f(\overline c)$} {on} the efficiency of Cournot equilibria, as a
function of $\overline c=c/d$. If $p(\cdot)$ is affine, then
$\overline c=c/d=1$. From (\ref{equa:thm_convex1}), {it can be
verified that} $f({1})=2/3$, which agrees with the lower bound {in
\cite{JT05} for the} affine case. {We note that} the lower bound
$f(\overline c)$ is monotonically decreasing in $\overline c$, over
the domain $[1,\infty)$. When $\overline c \in [1,3)$, ${\phi}$ is
{at least} $1$, and monotonically decreasing in $\overline c$. When
$\overline c \ge 3$, ${\phi}=1$.

\section{Applications}\label{sec:app}

\noindent {For a given} inverse demand function $p(\cdot)$, the
lower bound derived in Theorem \ref{thm:convex} requires  {some}
knowledge on the Cournot candidate and the social optimum, {namely,}
the aggregate supplies $X$ and $X^S$. {Even so, for a large class}
of inverse demand functions, we {can} apply \old{the results in}
Theorem \ref{thm:convex} to establish lower bound{s} on the
efficiency of Cournot equilibria {that do not require knowledge of
$X$ and $X^S$.} With additional information on {the} suppliers' cost
functions, the \old{derived} lower bounds can be further refined. At
the end of this section, we apply {our} results to calculate
nontrivial {quantitative} efficiency bounds for {various} convex
inverse demand functions {that have been considered in the}
economics literature.

\begin{corollary}\label{Coro:mu}
Suppose that Assumptions \ref{A:cost}-\ref{A:p0} hold and that the
inverse demand function is convex. Suppose also that $p(Q)=0$ for
some $Q>0$, and that the  ratio, $\mu  =
\partial_+p(0)/
\partial_-p(Q) $, is finite. Then, the efficiency of a Cournot candidate
is at least $f(\mu)$.
\end{corollary}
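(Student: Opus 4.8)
The plan is to reduce the corollary to Theorem~\ref{thm:convex} by showing that the model-dependent ratio $\overline c=c/d$ is always dominated by the global ratio $\mu$, and then to invoke the stated monotonicity of $f$. First I would record the basic structural fact that convexity and monotonicity of $p$ make $|\partial_+p|$ (equivalently $|\partial_-p|$) a nonincreasing function of $q$ on $[0,Q)$: since $p$ is convex its one-sided derivatives are nondecreasing, and since $p$ is nonincreasing they are nonpositive, so their absolute values are nonincreasing. The finiteness of $\mu$ forces $\partial_-p(Q)\neq 0$, whence $|\partial_+p(0)|\ge|\partial_-p(Q)|>0$ and $\mu\ge 1$; this also places $\mu$ in the domain $[1,\infty)$ on which $f$ is defined and monotonically decreasing.

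Next I would dispose of the two cases of Theorem~\ref{thm:convex}. In case (a), $p(X)=p(X^S)$, the efficiency equals $1$, and since $\mu\ge 1$ gives $f(\mu)\le f(1)=2/3\le 1$, the bound holds trivially. The substance is case (b), $p(X)\neq p(X^S)$, where Theorem~\ref{thm:convex} gives $\gamma(\bx)\ge f(\overline c)$ with $\overline c=c/d$, and Proposition~\ref{Prop:less} gives $X<X^S$. I would bound numerator and denominator separately. For the numerator, Proposition~\ref{Prop:derivative} guarantees differentiability of $p$ at $X$ and Proposition~\ref{Prop:trivial} gives $X>0$; monotonicity of $|\partial_+p|$ then yields $c=|p'(X)|\le|\partial_+p(0)|$. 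For the denominator, $d$ is the magnitude of the chord slope of $p$ across $[X,X^S]$; the convexity sandwich $\partial_+p(X)\le (p(X^S)-p(X))/(X^S-X)\le \partial_-p(X^S)$, read in absolute values, gives $d\ge|\partial_-p(X^S)|$, and since $X^S\le Q$ and $|\partial_-p|$ is nonincreasing, $d\ge|\partial_-p(Q)|$.

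The one point requiring care, and the main obstacle, is justifying that the social optimum $\bx^S$ entering $d$ may be taken with $X^S\le Q$. I would argue this by noting that $p$ vanishes on $[Q,\infty)$, so pushing total output above $Q$ leaves $\int_0^X p(q)\,dq$ unchanged while weakly increasing total cost; hence any optimal solution can be replaced by one with $X^S\le Q$ without lowering welfare, and by Proposition~\ref{Prop:sameprice} this replacement preserves the optimal price, so that we remain in case (b) with the same $p(X^S)$. Combining the two bounds gives $\overline c=c/d\le|\partial_+p(0)|/|\partial_-p(Q)|=\mu$; since $1\le\overline c\le\mu$ and $f$ is monotonically decreasing on $[1,\infty)$, we conclude $\gamma(\bx)\ge f(\overline c)\ge f(\mu)$, as claimed.
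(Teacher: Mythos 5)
Your proof is correct and follows the same basic route as the paper's: reduce to Theorem \ref{thm:convex} by showing $\overline c = c/d \le \mu$, then use the monotonicity of $f$. Two differences are worth noting. First, the paper treats the case $X > Q$ separately (there $p(X)=p'(X)=0$, so the candidate satisfies the optimality conditions \eqref{equa:optimality} and $\gamma(\bx)=1$), whereas your split into the two cases $p(X)=p(X^S)$ and $p(X)\neq p(X^S)$ absorbs that case automatically: if $X>Q$ then $p(X)=0$, and Proposition \ref{Prop:less} then rules out $p(X)\neq p(X^S)$. Second, and more substantively, the paper asserts that $\overline c\le\mu$ holds ``for any social optimum $\bx^S$,'' which is not literally true: when a social optimum with $X^S>Q$ exists (e.g., $p(q)=\max\{0,1-q\}$ with identically zero costs, where every $X^S\ge 1$ is optimal), the chord slope $d$ computed from such an $\bx^S$ can be arbitrarily small and $\overline c$ arbitrarily large, exceeding $\mu$. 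Your proof identifies exactly this issue and repairs it: you select a social optimum with $X^S\le Q$, justify its existence by the truncation argument (consumer surplus is unchanged beyond $Q$ while costs weakly decrease, and Proposition \ref{Prop:sameprice} keeps the price, hence the case, unchanged), and note that this choice is legitimate because Theorem \ref{thm:convex} holds for any choice of social optimum while $\gamma(\bx)$ does not depend on that choice. So your write-up is not only correct but more careful than the paper's own proof on this point; the remaining steps (monotonicity of the one-sided derivatives, the convexity sandwich for the chord, and $c\le|\partial_+p(0)|$, $d\ge|\partial_-p(Q)|$) match the intended argument.
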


The proof of Corollary \ref{Coro:mu} can be found in Appendix
\hyperlink{B3}{B.3}. For convex inverse demand functions, e.g.,
{for} negative exponential demand, {with}
$$
p(q)= \max \{0, \alpha-\beta \log q\}, \qquad  0< \alpha, \qquad 0<
\beta, \qquad 0 \le q ,
$$
 Corollary \ref{Coro:mu} {does not} apply,
because the left derivative of $p(\cdot)$ at $0$ is infinite. This
motivates us to refine the lower bound  in Corollary \ref{Coro:mu}.
By using a small amount of {additional} information on {the} cost
functions, we {can} derive an upper bound on the total supply at a
social optimum, as well as a lower bound on the {total} supply at a
Cournot equilibrium, to strengthen Corollary \ref{Coro:mu}.

\begin{corollary}\label{Coro:st_lower_bound}

Suppose that Assumptions \ref{A:cost}-\ref{A:p0} hold and {that}
$p(\cdot)$ is convex. Let\footnote{Under Assumption \ref{A:optimal},
the existence of the real numbers defined in \eqref{equa:t} is
guaranteed.} \old{ $t$ be a real number  such that}
\begin{equation}\label{equa:t}
s=\inf \{q \mid p(q) = \min_n C'_n(0) \},\;\;\;t = \inf \left\{ {q \
\big| \ \min_{n} C'_n(q) \ge p(q) + q
\partial_+p(q)} \right\}.
\end{equation}
If $\partial_-p(s)<0$, then the efficiency of a Cournot candidate is
{at least} $f\left(\partial_+p(t)/
\partial_-p(s)\right)$.

\end{corollary}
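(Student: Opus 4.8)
The plan is to reduce everything to Theorem \ref{thm:convex} by producing an upper bound on the scalar $\overline c = c/d$ appearing there. Since (as noted after Theorem \ref{thm:convex}) the function $f$ is monotonically decreasing on $[1,\infty)$, any upper bound $\overline c \le \partial_+p(t)/\partial_-p(s)$ together with $\overline c \ge 1$ will immediately give $\gamma(\bx) \ge f(\overline c) \ge f\big(\partial_+p(t)/\partial_-p(s)\big)$. To get this, I will bound $c=|p'(X)|$ from above and $d=|(p(X^S)-p(X))/(X^S-X)|$ from below, in terms of $t$ and $s$ respectively. The hypothesis $\partial_-p(s)<0$ is exactly what makes the lower bound on $d$ strictly positive, so that the ratio is well defined. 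I first dispose of the case $p(X)=p(X^S)$: here Theorem \ref{thm:convex}(a) gives $\gamma(\bx)=1$, which trivially exceeds $f(\cdot)\le 1$. So I assume $p(X)\ne p(X^S)$; then Proposition \ref{Prop:less} yields $X<X^S$, and we are in the setting of Theorem \ref{thm:convex}(b).

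The two geometric facts I would establish are $X^S\le s$ and $X\ge t$. For $X^S\le s$: by Proposition \ref{Prop:positive} the optimal welfare is positive, so some $x_n^S>0$, and the optimality condition \eqref{equa:optimality} together with convexity of $C_n$ gives $p(X^S)=C'_n(x_n^S)\ge C'_n(0)\ge \min_m C'_m(0)=p(s)$; since $p$ is nonincreasing this forces $X^S\le s$. For $X\ge t$: Assumption \ref{A:p0} and Proposition \ref{Prop:trivial} give $X>0$, and Proposition \ref{Prop:derivative} makes $p$ differentiable at $X$, so the refined conditions \eqref{equa:nece} apply. For each active supplier, using $C'_n$ nondecreasing, $x_n\le X$, and $p'(X)\le 0$, I get $C'_n(X)\ge C'_n(x_n)=p(X)+x_np'(X)\ge p(X)+Xp'(X)$; for each inactive supplier, $C'_n(X)\ge C'_n(0)\ge p(X)\ge p(X)+Xp'(X)$. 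Taking the minimum over $n$ shows $\min_n C'_n(X)\ge p(X)+X\partial_+p(X)$, i.e.\ $X$ belongs to the set defining $t$, hence $X\ge t$.

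With these in hand, I convert to slope bounds using convexity of $p$, which makes $\partial_+p$ and $\partial_-p$ nondecreasing. From $X\ge t$, $\partial_+p(X)\ge\partial_+p(t)$, so $c=-p'(X)\le-\partial_+p(t)$. For $d$, the standard convexity inequality $(p(X^S)-p(X))/(X^S-X)\le\partial_-p(X^S)$ gives $d\ge-\partial_-p(X^S)\ge-\partial_-p(s)>0$, where the second step uses $X^S\le s$ and monotonicity of $\partial_-p$, and positivity uses $\partial_-p(s)<0$. Dividing (a ratio of two nonpositive quantities) yields $\overline c=c/d\le\partial_+p(t)/\partial_-p(s)$; since Theorem \ref{thm:convex}(b) guarantees $\overline c\ge1$, the ratio is itself $\ge1$, and the monotonicity of $f$ completes the argument.

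The main obstacle is the step $X^S\le s$ when $p$ is flat at the value $\min_m C'_m(0)$ beyond $s$: then there may be optimal solutions with $X^S>s$, and the lower bound on $d$ could fail for such a choice. The fix is that Theorem \ref{thm:convex} holds for \emph{any} optimal $\bx^S$, so I would choose one with minimal total supply; comparing welfare and using that the minimal-cost production function $\Gamma$ satisfies $\Gamma'(\cdot)\ge\min_m C'_m(0)$ shows that supply level $s$ is itself optimal, so an optimal solution with $X^S\le s$ always exists. Aside from this wrinkle, the only care needed is bookkeeping with one-sided derivatives and sign conventions (all relevant slopes being nonpositive), and a brief remark that $\partial_+p(t)$ is well defined because the defining set of $t$ is closed from the right.
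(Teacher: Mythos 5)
Your proposal is correct and follows essentially the same route as the paper's proof: dispose of the case $p(X)=p(X^S)$, show that some social optimum satisfies $X^S\le s$ and that any Cournot candidate satisfies $X\ge t$, convert these to the slope bounds $d\ge|\partial_-p(s)|$ and $c\le|\partial_+p(t)|$ via convexity, and invoke Theorem \ref{thm:convex} with the monotonicity of $f$. The only difference is cosmetic: where you resolve the flat-price wrinkle by a welfare comparison showing supply level $s$ is itself optimal, the paper constructs the required optimum explicitly as the scaled vector $(s/X^S)\cdot\bx^S$, using the linearity of the active suppliers' costs on $[0,x_n^S]$.
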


\begin{remark} {\rm
If there exists a ``best'' supplier $n$ such that $ C'_n(x) \le
C'_m(x)$, for any other supplier $m$ and any $x >0$,  then the
parameters $s$ and $t$ depend only on $p(\cdot)$ and $C'_n(\cdot)$.}
\end{remark}

Corollary \ref{Coro:st_lower_bound} is proved in Appendix
\hyperlink{B4}{B.4}. We now apply Corollary
\ref{Coro:st_lower_bound} to three examples.

\begin{example}\label{example:eq6_BP83}
\rm{Suppose that Assumptions \ref{A:cost}, \ref{A:optimal}, and
\ref{A:p0} hold, and {that} there is a best supplier, whose cost
function is linear with a slope $c \ge 0$. Consider inverse demand
functions { of the form ({cf.\ Eq. (6) in} \cite{BP83})}
\begin{equation}\label{equa:eq6_BP83}
p(q)= \max \{0, \alpha-\beta \log q\},\qquad 0<q,
\end{equation}
{where $\alpha$ and $\beta$ are positive constants.} Note that
Corollary \ref{Coro:mu} does not apply, because the left derivative
of $p(\cdot)$ at $0$ is infinite.\footnote{{In fact, $p(0)$ is
undefined. This turns out to not be an issue: for a small enough
$\epsilon>0$, we can guarantee that no supplier chooses a quantity
below $\epsilon$. Furthermore, $\lim_{\epsilon\downarrow
0}\int_0^\epsilon p(q)\,dq =0$. For this reason, the details of the
inverse demand function in the vicinity of zero are immaterial as
far as the chosen quantities or the resulting social welfare are
concerned. }}
 Since
$$p'(q)+qp''(q) = \dfrac{-\beta}{q} + \dfrac{q\beta}{q^2} =0, \qquad \forall q \in (0,\exp(\alpha/\beta)), $$ Proposition
\ref{Prop:Novshek85} {implies} that there exists a Cournot
equilibrium. Through a simple calculation we obtain
$$
s=\exp\left(\dfrac{\alpha-c}{\beta}\right),\qquad t=\exp\left(
{\dfrac {\alpha-\beta-c}{\beta}}   \right).
$$
From Corollary \ref{Coro:st_lower_bound} we obtain that for every
Cournot equilibrium $\textbf{x}$,
\begin{equation}\label{equa:example61}
\gamma (\textbf{x}) \ge f\left(
\dfrac{\exp\left((\alpha-c)/\beta\right)}{\exp\left(
(\alpha-\beta-c)/\beta \right)} \right) =f\left( \exp\left( 1
\right) \right )\ge 0.5237.
\end{equation}

Now we argue that the efficiency lower bound \eqref{equa:example61}
holds even without the assumption that there is a best supplier
associated with a linear cost function.
 From
Proposition \ref{Prop:linear}, the efficiency of any Cournot
equilibrium $\mathbf{x}$ will not increase if the cost function of
each supplier $n$ is replace{d} by
$$
\overline C_n(x) = C'_n(x_n) x,\qquad\forall x \ge 0.
$$
Let $c=\min_n \{C'_n(x_n) \}$. Since the efficiency lower bound in
(\ref{equa:example61}) holds for the modified {model} with linear
cost functions, it applies {whenever the} inverse demand function
 {is of} the form
(\ref{equa:eq6_BP83}). } \Halmos
\end{example}

\begin{example}\label{example:eq5_BP83}
\rm{Suppose that Assumption{s}  \ref{A:cost}, \ref{A:optimal}, and
\ref{A:p0} hold, and {that} there is a best supplier, {whose} cost
function is linear with a slope $c \ge 0$. Consider inverse demand
functions {of the form ({cf.\ Eq.\ (5) in} \cite{BP83})}
\begin{equation}\label{equa:eq5_BP83}
 p(q)=\max \{ \alpha -
\beta q^{\delta}, {0}  \},\qquad0 < \delta \le 1,
\end{equation}
{where $\alpha$ and $\beta$ are positive constants.} Note that if
$\delta =1$, {then} $p(\cdot)$ is affine; {if} $0 < \delta \le 1$,
{then} $p(\cdot)$ is convex. Assumption \ref{A:p0} implies that
$\alpha
> \chi$. Since
$$p'(q)+qp''(q) =   - \beta \delta {q^{\delta  - 1}} - \beta \delta (\delta  - 1){q^{\delta  - 1}} = -\beta \delta^2 {q^{\delta  - 1}}
\le 0 , \;\;\; 0 \le q \le \left( \frac{\alpha}{\beta}
\right)^{1/\delta}, $$ Proposition \ref{Prop:Novshek85} {implies}
that there exists a Cournot equilibrium. Through a simple
calculation we have
$$
s=\left(  \dfrac{\alpha-c}{\beta} \right)^{1/\delta},\;\;\; t=\left(
\dfrac{\alpha-c}{\beta(\delta+1)} \right)^{1/\delta}.
$$
From Corollary \ref{Coro:st_lower_bound} we know that for every
Cournot equilibrium $\mathbf{x}$,
$$
\gamma (\mathbf{x}) \ge f\left( \dfrac{-\beta\delta
t^{\delta-1}}{-\beta\delta s^{\delta-1}} \right) = f\left(
(\delta+1)^{\frac{1-\delta}{\delta}} \right ).
$$
Using the argument in Example \ref{example:eq6_BP83},  we
conclude that this lower bound also applies to the case of general
convex cost functions.} \Halmos
\end{example}

As we will see in the following example, {it is sometimes} hard to
find a closed form expression for the real number $t$. In {such
cases,} since $s$ is an upper bound for the aggregate supply at a
social optimum (cf.\ the proof of Corollary
\ref{Coro:st_lower_bound}
 in Appendix \hyperlink{B4}{B.4}), Corollary \ref{Coro:st_lower_bound}
implies that the efficiency of a Cournot candidate is at least
$f\left(\partial_+p(0)/
\partial_-p(s)\right)$. {Furthermore, in terms of} the aggregate supply at a Cournot
equilibrium $X$,
 we know that $\gamma(\bx) \ge f\left(p'(X)/
\partial_-p(s)\right)$.
}

\begin{example}\label{example:FW09}
\rm{Suppose that Assumptions \ref{A:cost}, \ref{A:optimal}, and
\ref{A:p0} hold, and {that} there is a best supplier, whose cost
function is linear with a slope $c \ge 0$. Consider inverse demand
functions  of the form ({cf.\ {p.}~8 in} \cite{FW09})
\begin{equation}\label{equa:FW09}
 p(q) = \left\{ \begin{array}{l} \displaystyle { \alpha  \left(
Q-q \right)^\beta },\qquad \qquad 0 < q \le
Q,\\[5pt]
\displaystyle 0,\qquad \qquad \qquad \qquad Q < q,
\end{array} \right.
\end{equation}
{where $Q>0$, $\alpha>0$ and $\beta \ge 1$.} Assumption \ref{A:p0}
implies that $c < \alpha Q^\beta $. Note that Corollary
\ref{Coro:mu} does not apply, because the right derivative of
$p(\cdot)$ at $Q$ is zero. Through a simple calculation we obtain
$$
s=Q-\left( \frac{c}{\alpha} \right)^{1/\beta},
$$
and
$$
p'(s)= \alpha \beta \left( \frac{c}{\alpha}
\right)^{(\beta-1)/\beta}, \qquad \partial_+p(0)= \alpha \beta
Q^{\beta-1}.
$$
  Corollary \ref{Coro:st_lower_bound} implies that for
every Cournot equilibrium $\textbf{x}$,
$$
\gamma (\textbf{x}) \ge f\left( \dfrac{\partial_+p(0)}{p'(s)}
\right) =f\left( \left(\dfrac{\alpha Q^\beta}{c}
\right)^{(\beta-1)/\beta} \right ) =f\left( \left(\dfrac{p(0)}{c}
\right)^{(\beta-1)/\beta} \right ).
$$
{Using} information {on} the aggregate demand at the equilibrium,
the efficiency bound can be refined. Since
$$
p'(X)=  \alpha \beta (Q-X)^{\beta-1},
$$
we have
\begin{equation}\label{equa:example09}
\gamma (\textbf{x}) \ge f\left( \dfrac{p'(X)}{p'(s)} \right)
=f\left( \left(\dfrac{\alpha (Q-X)^\beta}{c}
\right)^{(\beta-1)/\beta} \right ) =f\left( \left(\dfrac{p(X)}{c}
\right)^{(\beta-1)/\beta} \right ),
\end{equation}
{so that} the efficiency bound depends only on the ratio of the
equilibrium price to the marginal cost of the best supplier, and the
parameter $\beta$. For affine inverse demand functions, we have
$\beta=1$ and the bound in \eqref{equa:example09} equals $f(1)=2/3$,
which agrees {with Theorem \ref{thm:linear}.}} \Halmos
\end{example}


\old{
\begin{example} \label{ex:last}{\rm
\mg{[To Be Deleted, a tighter lower bound is given in \cite{GY05}]}
 Suppose that Assumption{s} \ref{A:cost},
\ref{A:optimal}, and \ref{A:p0} hold, and {that} there is a best
supplier, {whose} cost function is linear, with  slope $\chi > 0$.
Consider {constant elasticity inverse demand functions, of the form
(cf.\ Eq.\ (4) in} \cite{BP83})
\begin{equation}\label{equa:eq4_BP83}
 p(q)=\alpha q^{-\beta},\qquad
0 \le q,
\end{equation}
where $\alpha$ and $\beta$ are positive constants. We assume that
$\beta<1$ so that $\int_0^\epsilon p(q)\,dq < \infty$, for any
$\epsilon>0$. Through a simple calculation we have,
$$
s=\left(  \dfrac{\chi}{\alpha} \right)^{-1/\beta},\qquad  t= \left(
\dfrac{\chi}{\alpha(1-\beta)} \right)^{-1/\beta}.
$$

From Corollary \ref{Coro:st_lower_bound} we conclude that, a Cournot
equilibrium $\mathbf{x}$, if {one} exists, must satisfy
$$
\gamma (\textbf{x}) \ge f\left( \dfrac{-\alpha \beta
t^{-\beta-1}}{-\alpha \beta s^{-\beta-1}} \right) = f\left(
(1-\beta)^{\frac{-\beta-1}{\beta}} \right ).
$$
By the same argument in Example \ref{example:eq6_BP83},  this lower
bound also applies to the case of general convex cost functions.}
\Halmos
\end{example}
}

\section{Monopoly and Social Welfare}\label{sec:mono}

In this section we study the special case where $N=1$, so that we
are dealing with a single, monopolistic, supplier. As we explain,
this case also covers a setting where multiple suppliers collude to
maximize their total profit. By using the additional assumption that
$N=1$, we obtain a sharper (i.e., larger) lower bound, in Theorem
\ref{thm:mono}. We then establish lower bounds on the efficiency of
monopoly outputs that do not require knowledge of $X$ and $X^S$.

In a Cournot oligopoly, the maximum possible profit earned by all
suppliers (if they collude) is an optimal solution to the following
optimization problem,
\begin{equation}\label{equa:max_profit}
\begin{array}{l}
 \displaystyle{\rm{maximize}}\;\;p\left(\sum\limits_{n = 1}^N x_n\right)  \cdot \sum\limits_{n = 1}^N x_n    - \sum\limits_{n = 1}^N {C_n({x_n})}  \\
 \displaystyle{\rm{subject}}\;{\rm{to}}\;\;{x_n} \ge 0,\;\;\;n = 1, \ldots ,N. \\
 \end{array}
 \end{equation}
We use $\bx^P=(x^P_1,\ldots,x^P_N)$ to denote an optimal solution to
\eqref{equa:max_profit} ({a \it monopoly output}), and let $X^P=
\sum\limits_{n = 1}^N {x_n^P } $.

It is not hard to see that the aggregate supply at a monopoly
output, $X^P$, is also a Cournot equilibrium in a modified
model with a single supplier ($N$=1) and a cost function 
given by
\begin{equation}\label{equa:mono2}
\begin{array}{l}
\displaystyle \;\;\;\;\;\;\;\;\;\;\;\;\;\;\;\;\;\;\;\;\;\;\; \overline C(X)=\inf \,\sum\limits_{n = 1}^N {C_n({x_n})},
\\[5pt]
\displaystyle {\rm{subject}}\;{\rm{to}}\;\;{x_n} \ge 0,\;\;\;n = 1, \ldots ,N;\;\;\; \sum\limits_{n = 1}^N x_n =X. \\
 \end{array}
 \end{equation}
Note that $\overline C(\cdot)$ is convex (linear) when the
$C_n(\cdot)$ are convex (respectively, linear). Furthermore, the
social welfare
 at the monopoly output $\bx^P$, is the same as that achieved at the
 Cournot equilibrium, $x_1=X^P$, in the modified model.
Also, the socially optimal value of $X$, as well as the resulting
social welfare is the same for the $N$-supplier model and the above
defined modified model with $N=1$. Therefore, the efficiency of the
monopoly output equals the efficiency of the Cournot equilibrium of
the modified model. To lower bound the efficiency of monopoly
outputs resulting from multiple colluding suppliers, we can (and
will) restrict to the case with $N=1$.

\begin{theorem}\label{thm:mono}
Suppose that Assumptions \ref{A:cost}-\ref{A:p0} hold, and the
inverse demand function is convex. Let $\bx^S$ and $\bx^P$  be a
social optimum and a monopoly output, respectively. Then, the
following hold.
\begin{enumerate}

\item[(a)] If $p(X^P)=p(X^S)$, then $\gamma(\bx^P)=1 $.

\item[(b)] If $p(X^P)\ne p(X^S)$, let
$c=|p'(X^P)|$, $d=|(p(X^S)-p(X^P))/(X^S-X^P)|$, and $\overline c=
c/d$. We have $\overline c \ge 1$ and
\begin{equation}\label{equa:thm_mono}
\gamma(\bx^P) \ge \dfrac{3}{3+ \overline c} .
\end{equation}

\item[(c)] The bound is tight at $\overline c=1$, i.e., there exists a model
with $\overline c=1$ and a monopoly output whose efficiency is
$3/4$.
\end{enumerate}
\end{theorem}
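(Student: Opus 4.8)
The plan is to handle the three parts separately, with part (b) carrying all the content. Part (a) is immediate: since $N=1$, the monopoly output $\bx^P$ maximizes the single supplier's profit $xp(x)-C(x)$, which is exactly its payoff, so $\bx^P$ is a Cournot equilibrium and hence a Cournot candidate satisfying \eqref{equa:nece}. Part (a) then follows directly from Proposition \ref{Prop:equal}, and the inequality $\overline c\ge 1$ in part (b) follows from convexity of $p$, which forces $c=|p'(X^P)|$ to be at least the chord slope magnitude $d$.

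For part (b), I would first invoke Proposition \ref{Prop:piecewise_linear}. Since $\bx^P$ is a Cournot candidate with $p(X^P)\ne p(X^S)$, replacing $p$ by the piecewise linear $p^0$ (slope $-c$ on $[0,X^P]$, then the chord to $(X^S,p(X^S))$) cannot increase efficiency and keeps $\bx^S$ socially optimal. The key point is that this reduction preserves $X^S$ and hence both $c$ and $d$, so it suffices to lower bound $\gamma^0(\bx^P)=W^P/W^S$, where $W^P=\int_0^{X^P}p^0-C(X^P)$ and $W^S=\int_0^{X^S}p^0-C(X^S)$. I would deliberately \emph{not} linearize the cost (via Proposition \ref{Prop:linear}): doing so moves the social optimum outward and shrinks the effective $d$, yielding only the weaker bound obtained from a larger $\overline c$. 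Instead I keep $C$ convex and exploit it directly.

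The heart of the argument is to bound $W^P$ and $W^S$ using only convexity of $C$ and the monopoly first-order condition. Write $h=p(X^P)$ and $L=X^S-X^P>0$. For $N=1$ the refined condition \eqref{equa:nece} gives $C'(X^P)=h-cX^P$ \emph{exactly}; this exactness is where the single-supplier assumption buys the tighter bound, in place of the free parameter $\phi$ appearing in Theorem \ref{thm:convex}. Since $C$ is convex with $C(0)=0$, I get $C(X^P)\le C'(X^P)X^P$, hence $W^P\ge \tfrac32 c(X^P)^2$; and since $C'(s)\ge C'(X^P)$ for $s\ge X^P$, I get $W^S-W^P\le cX^PL-\tfrac12 dL^2=:K$, which is positive because $cX^P\ge dL$ (a consequence of $C'(X^S)=p(X^S)\ge C'(X^P)$). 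Combining these, $\gamma^0(\bx^P)\ge \tfrac32 c(X^P)^2\big/(\tfrac32 c(X^P)^2+K)$, and the final step reduces to showing this is at least $\frac{3}{3+\overline c}=\frac{3d}{3d+c}$. Cross-multiplying and collecting terms, everything should collapse to $(cX^P-dL)^2\ge0$, with equality exactly when $L=\overline c\,X^P$; this identity is the clean endpoint I expect and it simultaneously pins down the worst case.

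For part (c) I would exhibit the affine example $p(q)=\max\{0,2-q\}$ with a single supplier of zero cost: then $X^P=1$, $X^S=2$, $\overline c=1$, and direct integration gives $\gamma(\bx^P)=(3/2)/2=3/4=\frac{3}{3+1}$, matching the bound. The main obstacle is resisting the temptation to linearize the cost and instead treating the convex cost head-on: the bounds on $C(X^P)$ and $C(X^S)$ must be taken in opposite directions and tied together through the shared value $C'(X^P)=h-cX^P$, so that the coupling between numerator and denominator collapses to the perfect square $(cX^P-dL)^2$.
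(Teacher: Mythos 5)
Your proof is correct, and every step checks out: the exact first-order condition $C'(X^P)=p(X^P)-cX^P$ (valid by Proposition \ref{Prop:derivative} and \eqref{equa:nece}, since with $N=1$ the monopoly output is itself a Cournot equilibrium) together with $C(X^P)\le C'(X^P)X^P$ gives $W^P\ge \frac{3}{2}c(X^P)^2$; convexity anchored at $X^P$ gives $W^S-W^P\le cX^PL-\frac{1}{2}dL^2=K$; the optimality condition $C'(X^S)=p(X^S)$ gives $cX^P\ge dL$, so $K>0$ and your monotonicity manipulations of the ratio are legitimate; and cross-multiplying the final inequality indeed collapses to $\frac{3}{2}\left(cX^P-dL\right)^2\ge 0$. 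But your route is genuinely different from the paper's. The paper reuses the template of Theorem \ref{thm:convex}: it (implicitly) linearizes the cost via Proposition \ref{Prop:linear}, splits into the cases $\alpha_1=0$ and $\alpha_1>0$, then applies Proposition \ref{Prop:piecewise_linear}, and in the doubly linearized model computes the efficiency \emph{exactly}, obtaining $\gamma^0(\bx)=3/(3+\overline c)$ as an identity rather than an inequality. You keep the convex cost and use only the demand replacement $p\mapsto p^0$, absorbing the cost through two opposite-direction gradient inequalities tied together by the shared value $C'(X^P)$. What your approach buys is exactly what you claim: $X^S$, $c$, and $d$ never move, so the bound is unambiguously in terms of the theorem's own $\overline c$; moreover you identify the equality case $L=\overline c\,X^P$. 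Your reason for avoiding Proposition \ref{Prop:linear} is well founded for the paper's order of operations: replacing $C$ by $C'(X^P)x$ pushes the social optimum outward and, by convexity of $p$, can only decrease the chord slope, so an exact computation performed after that reduction is really carried out with a ratio at least as large as $\overline c$ --- a point the paper's proof passes over silently. (The paper's order can be repaired: if the demand is replaced by $p^0$ first, a subsequent cost linearization moves the social optimum along the linear piece of slope $-d$, leaving $d$ intact; but then Proposition \ref{Prop:linear} cannot be cited verbatim, since $\bx$ is not a Cournot candidate for the $p^0$-model.) Your part (c) example is the paper's own (affine demand, zero cost, a single supplier), up to rescaling.

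One omission to fix: the theorem concerns a monopoly output of an $N$-supplier model with colluding suppliers, so you must first invoke the reduction of Section \ref{sec:mono}, replacing the $N$ cost functions by the infimal-convolution cost $\overline C$ of \eqref{equa:mono2}. Since $\overline C$ is convex and your argument is written for an arbitrary convex cost, it then applies verbatim; but opening with ``since $N=1$'' skips this step, which the paper performs explicitly before splitting into cases.
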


 \begin{figure}\label{Fig:mono}
    \includegraphics[width=10.5cm]{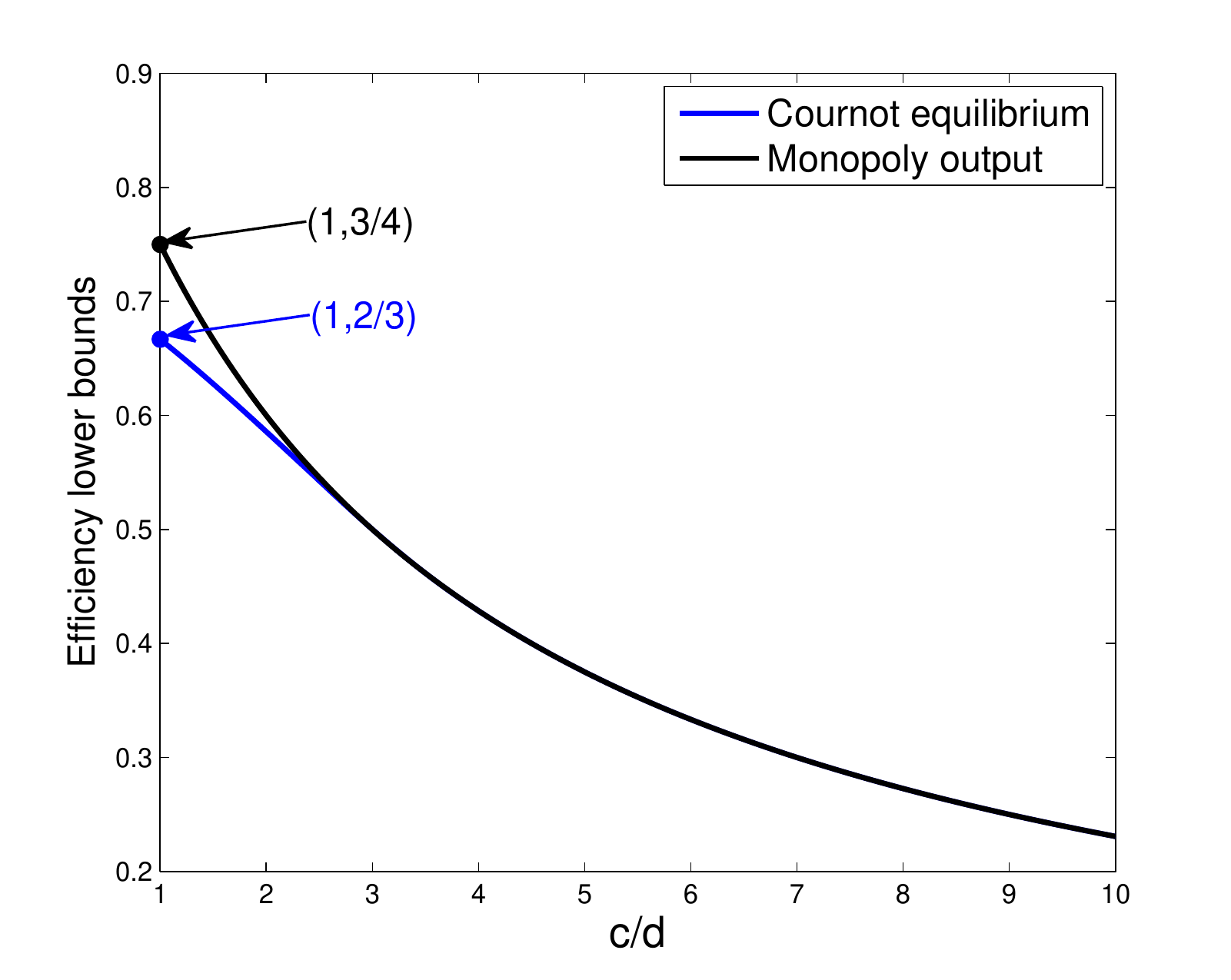}
    \centering
    \caption{Comparison of the lower bounds on the efficiency of Cournot equilibria and monopoly outputs for the case of convex inverse demand functions.}
  \end{figure}

The proof for Theorem \ref{thm:mono} can be found in Appendix
\hyperlink{C}{C.1}. Fig.\ 4 compares the efficiency lower bounds
established for Cournot equilibria with that for monopoly outputs.
For $\overline c=1$, both efficiency bounds are tight and it is
possible for a monopoly output to achieve a higher efficiency than
that of a Cournot equilibrium, as shown in the following example.

\begin{example}\label{example:mono}  \rm{
Consider the model introduced in the proof {of} part (c) of Theorem
\ref{thm:linear}. The inverse demand function is
$p(q)=\max\{1-q,0\}$. The cost functions are linear:
$$
C^N_1(x_1)=0, \qquad C^N_n(x_n)= \left( 1/3- \dfrac{1/3}{N-1}
\right) x_n, \qquad n=2,\ldots,N.
$$
If all suppliers collude to maximize the total profit, the output
will be
$$
x^P_1= 1/2, \qquad x^P_n= 0, \quad n=2,\ldots,N,
$$
and $\gamma(\bx^P) = 3/4$. On the other hand, it can be verified
that the vector
$$
x_1= 1/3, \qquad x_n= \dfrac{1/3}{N-1}, \quad n=2,\ldots,N,
$$
 is a
Cournot equilibrium. For any $N \ge 2$, a simple calculation shows
that the associated efficiency is $(6N-4)/(9N-9)$. For example, in a
model with $N=10$, the efficiency of the Cournot equilibrium is less
than that of the monopoly output, i.e.,
 $\gamma(\bx^P) = 3/4 > 56/81=\gamma(\bx)$.   \Halmos
}
\end{example}

 The above example agrees with the observation
in earlier works that a monopoly output is not necessarily less
efficient than an equilibrium resulting from imperfect competition
\citep{CL69,CR71}.

\old{\subsection{Corollaries and Applications}\label{sec:mono3}

The efficiency bound derived in Theorem \ref{thm:mono} requires
information on the monopoly output and the optimum point which
achieves the highest aggregate profit, i.e., $X^P$ and $X^P$. We
first apply Theorem \ref{thm:mono} to establish a lower bound on the
efficiency of monopoly outputs, which depends only on the inverse
demand function. With additional information on suppliers' cost
functions, the derived lower bound can be further refined. By the
end of this subsection, we apply the derived results to several
inverse demand functions in economics literature.

\begin{corollary}\label{Coro:mu_profit_mono}
Suppose that Assumptions \ref{A:cost}-\ref{A:p0} hold and the
inverse demand function is convex. If $p(Q)=0$ for some $Q>0$, and
that the ratio, $\mu =
\partial_+p(0)/
\partial_-p(Q) $, is finite. Then, the profit ratio of a monopoly
output candidate
 cannot be less than $3/(3+\mu)$.

\end{corollary}

\begin{proof}
Let $\bx^P$ be a monopoly output candidate. If $X^P>Q$, then we have
$p(X^P)=p'(X^P)=0$. The conditions \eqref{equa:optimality_profit}
imply the conditions in (\ref{equa:optimality}), and thus
$\gamma(\bx^P)=1 \ge 3/(3+\mu)$.

Now consider the case $X^P \le Q$. If $p(X^P) =p(X^S)$ for some
social optimum $\bx^S$, then Proposition \ref{Prop:equal_mono}
implies that $\gamma(\bx^P)=1 \ge 3/(3+\mu)$. Otherwise, since
$p(\cdot)$ is convex, for any social optimum $\bx^S$, we have that
$\overline c \le \mu$. The desired result then follows from Theorem
\ref{thm:mono}, and the fact that $3/(3+\overline c)$ is decreasing
in $\overline c$.
\end{proof}}

 We now derive a result similar to Corollary
\ref{Coro:st_lower_bound}, and then apply it to a numerical example
with the same inverse demand function as in Example
\ref{example:eq6_BP83}.

\begin{corollary}\label{Coro:st_lower_bound_mono}
Suppose that Assumptions \ref{A:cost}-\ref{A:p0} hold, and that
$p(\cdot)$ is convex. Let
 $s$ and $t$ be the real numbers defined in
 \eqref{equa:t}. If
$\partial_-p(s)<0$, then for any monopoly output $\bx^P$, we have
$$
\gamma(\bx^P) \ge
\dfrac{\partial_-p(s)}{3\partial_-p(s)+\partial_+p(t)} .
$$
\end{corollary}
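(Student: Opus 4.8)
The plan is to reduce everything to the single scalar bound of Theorem~\ref{thm:mono} and then control the ratio $\overline c=c/d$ using only the quantities $s$ and $t$. First I would dispose of the degenerate case: if $p(X^P)=p(X^S)$, then Theorem~\ref{thm:mono}(a) gives $\gamma(\bx^P)=1$, which dominates any bound that is at most $1$, so the claim holds trivially. For the remaining case $p(X^P)\ne p(X^S)$, Theorem~\ref{thm:mono}(b) already supplies $\gamma(\bx^P)\ge 3/(3+\overline c)$ with $\overline c\ge 1$, where $c=|p'(X^P)|$ and $d$ is the absolute slope of the secant of $p(\cdot)$ between $X^P$ and $X^S$. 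Since $3/(3+\cdot)$ is decreasing, it suffices to produce an upper bound on $\overline c$ expressed through $s$ and $t$.

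The core of the argument is two \emph{localization} estimates, $X^S\le s$ and $X^P\ge t$. For $X^S\le s$, I would use the social optimality conditions~\eqref{equa:optimality}: for the most efficient supplier these force $p(X^S)\ge \min_n C'_n(0)=p(s)$, and since $p(\cdot)$ is nonincreasing this yields $X^S\le s$. For $X^P\ge t$, I would invoke the reduction preceding Theorem~\ref{thm:mono}, under which $X^P$ is a Cournot candidate of the one-supplier model with cost $\overline C(\cdot)$, and then reproduce the reasoning behind Corollary~\ref{Coro:st_lower_bound}: for $q<t$ the marginal revenue $p(q)+q\,\partial_+p(q)$ strictly exceeds the marginal cost $\min_n C'_n(q)$ of the best supplier, so the colluding suppliers can strictly raise their total profit by expanding output; hence the profit-maximizing aggregate satisfies $X^P\ge t$. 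These are precisely the monopoly analogues of the bounds established in the proof of Corollary~\ref{Coro:st_lower_bound} in Appendix~\hyperlink{B4}{B.4}.

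Next I would convert the localization into a bound on $\overline c$ using convexity of $p(\cdot)$. Because $X^P\ge t$ and $p'$ is nondecreasing (with $p'(X^P)$ guaranteed to exist by Proposition~\ref{Prop:derivative}), we get $p'(X^P)\ge \partial_+p(t)$, both nonpositive, hence $c=|p'(X^P)|\le|\partial_+p(t)|$. Because Proposition~\ref{Prop:less} gives $X^P< X^S\le s$, the standard convexity inequality for secants shows the signed secant slope is at most $\partial_-p(X^S)\le\partial_-p(s)<0$, whence $d\ge|\partial_-p(X^S)|\ge|\partial_-p(s)|$. Combining, $\overline c=c/d\le |\partial_+p(t)|/|\partial_-p(s)|=\partial_+p(t)/\partial_-p(s)$ (the ratio of two negative numbers). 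Plugging this into the decreasing function $3/(3+\overline c)$ gives $\gamma(\bx^P)\ge 3/\bigl(3+\partial_+p(t)/\partial_-p(s)\bigr)$, which, written as a single fraction after clearing the (negative) denominators, is the bound claimed in the corollary.

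I expect the main obstacle to be the rigorous justification of the two localization estimates, in particular $X^P\ge t$: it requires translating the joint profit-maximization first-order conditions into a statement about the single scalar $\min_n C'_n(q)$ and the marginal-revenue function $p(q)+q\,\partial_+p(q)$, and handling the infimal-convolution cost $\overline C(\cdot)$ of the reduced model together with boundary and non-differentiability issues. The convexity manipulations and the final monotonicity step are routine by comparison, provided one is careful with the signs of the (negative) derivatives throughout.
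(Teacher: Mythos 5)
Your route is the paper's own: reduce the colluding-suppliers problem to the $N=1$ model, invoke Theorem~\ref{thm:mono}(b), bound $\overline c=c/d$ through the localization $X^S\le s$ and $X^P\ge t$ exactly as in the proof of Corollary~\ref{Coro:st_lower_bound} (Appendix B.4), and finish by monotonicity of $3/(3+\overline c)$. Two remarks on the endgame: what this argument actually delivers is $\gamma(\bx^P)\ge 3\partial_-p(s)/\bigl(3\partial_-p(s)+\partial_+p(t)\bigr)$, which is the form the paper itself uses in Example~\ref{example:eq6_BP83_mono} and which implies the printed bound (both derivatives being negative), so no harm there; and your sketch of $X^P\ge t$ is sound --- it is the paper's Step 2 transplanted to the joint-profit first-order conditions, with only the routine one-sided-derivative care you already flag.

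The genuine gap is your justification of $X^S\le s$. From \eqref{equa:optimality} you get $p(X^S)\ge \min_n C'_n(0)=p(s)$, but a \emph{weak} inequality between values of a nonincreasing function cannot be inverted: when $p(X^S)=\min_n C'_n(0)$ and $p(\cdot)$ is flat at that level, a social optimum may have $X^S>s$, and then your chain $d\ge|\partial_-p(X^S)|\ge|\partial_-p(s)|$ breaks (the second inequality needs $X^S\le s$; by convexity $X^S>s$ pushes it the wrong way). Concretely: take $N=1$, $C_1(x)=x$, and $p(q)=2-q$ on $[0,1]$, $p(q)=1$ for $q>1$. Then $s=1$, $\partial_-p(s)=-1<0$, $t=1/2$, $\partial_+p(t)=-1$, $X^P=1/2$, and every $X^S\ge 1$ is socially optimal; choosing $X^S=2$ gives $d=1/3$ and $\overline c=3>1=\partial_+p(t)/\partial_-p(s)$, so the inequality you need fails for this perfectly legitimate social optimum (here $\partial_-p(X^S)=0$). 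The repair is precisely Step 1 of the paper's Appendix B.4: when $p(X^S)=\min_n C'_n(0)$, each active supplier's cost is linear on $[0,x_n^S]$, so the rescaled vector $(s/X^S)\,\bx^S$ is again socially optimal with aggregate supply exactly $s$; one then applies Theorem~\ref{thm:mono} with \emph{that} optimum, which is permissible because the theorem holds for any choice of social optimum and $\gamma(\bx^P)$ does not depend on that choice. With this substitution your proof goes through and coincides with the paper's.
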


Corollary \ref{Coro:st_lower_bound_mono} is proved in Appendix
\hyperlink{C2}{C.2}.

\begin{example}\label{example:eq6_BP83_mono}
\rm{Suppose that Assumptions \ref{A:cost}, \ref{A:optimal}, and
\ref{A:p0} hold, and that there is a best supplier, whose cost
function  is linear with a slope $\chi \ge 0$. Consider inverse
demand functions of the form in \eqref{equa:eq6_BP83}. \old{
   (eq.6,
\cite{BP83}),
\begin{equation}\label{equa:eq6_BP832}
p(q)=\alpha-\beta \log q,\;\;\;\alpha, \; \beta >0, \;\;\; 0<q
<\exp(\alpha/\beta).
\end{equation}}
 Through a simple calculation we have
$$
s=\exp\left(\dfrac{\alpha-\chi}{\beta}\right),\qquad t=\exp\left(
{\dfrac{\alpha-\beta-\chi}{\beta} }  \right),
$$
and
$$
 \dfrac{p'(t)}{
p'(s)} = \dfrac{\exp\left((\alpha-\chi)/\beta\right)}{\exp\left(
(\alpha-\beta-\chi)/\beta \right)}=\exp( 1 ).
$$
According to Corollary \ref{Coro:st_lower_bound_mono}, for every
monopoly output $\bx^P$ we have,
\begin{equation}\label{equa:example81}
\gamma (\bx^P) \ge  3/(3+\exp( 1 )) {=0.525}.
\end{equation}

Using the argument in Example \ref{example:eq6_BP83}, we conclude
that this efficiency bound also
  applies to the case of nonlinear (convex) cost functions.} \Halmos
\end{example}

\section{Conclusion}\label{sec:con}
It is well known that Cournot oligopoly can yield arbitrarily high
efficiency loss in general; for details, see \cite{J04}. For Cournot
oligopoly with convex market demand and cost functions, results such
as those provided in Theorem \ref{thm:convex} show that the
efficiency loss of a Cournot equilibrium can be bounded away from
zero by a function of a scalar parameter that captures quantitative
properties of the inverse demand function. With additional
information on the cost functions, the efficiency lower bounds can
be further refined. Our results apply to various convex inverse
demand functions that have been considered in the economics
 literature.


\bibliographystyle{nonumber}

\ACKNOWLEDGMENT{This research was supported in part by the National
Science Foundation under grant CMMI-0856063 and by a Graduate
Fellowship from Shell.}

\newpage

\begin{APPENDICES}

\pdfbookmark[0]{Proofs of the results in Section
\ref{sec:property}}{A}

\section*{Appendix A: Proofs of the results in Section
\ref{sec:property}}

\subsection*{A.1. Proof of Proposition \ref{Prop:sameprice}}
\pdfbookmark[1]{Proof of Proposition \ref{Prop:sameprice}}{A1}

Suppose not, in which case there exist two optimal solutions,
$\bx^S$ and $\overline \bx^S$, such that $p(X^S) \ne p(\overline
X^S)$. Without loss of generality, we assume that $p(X^S) >
p(\overline X^S)$. Since $p(\cdot)$ is nonincreasing, we {must} have
\old{[that]} $X^S<\overline X^S$. For all $n$ such that $\overline
x_n^S>0$, the optimality conditions (\ref{equa:optimality}) {yield}
$$
C'_n(\overline x_n^S)=p(\overline X^S)<p(X^S) \le C'_n(x_n^S).
$$
{Using the convexity of the cost functions, we obtain}
$$
\overline x^S_n < x_n^S,\qquad {\rm{if}}\; \overline x^S_n>0,
$$
This contradicts the assumption that $X^S<\overline X^S$, {and} the
desired result follows.

\pdfbookmark[1]{Proof of Proposition \ref{Prop:derivative}}{A2}

\subsection*{A.2. Proof of Proposition
\ref{Prop:derivative}}\label{sec:A2}

Let $\textbf{x}$ be a Cournot candidate with $X>0$. The conditions
(\ref{equa:nece1})-\eqref{eq:necsec} applied to some $n$ with
$x_n>0$, imply that
\[
p\left( X \right) + {x_n} \cdot \partial_- p \left( X \right) \ge
p\left( X \right) + {x_n} \cdot \partial_+ p\left( X \right).
\]
On the other hand, since $p(\cdot)$ is  convex, we  have $
\partial_- p\left( X \right) \le \partial_+ p\left( X \right)$.
Hence, $\partial_- p\left( X \right)  = \partial_+ p\left( X
\right)$, as claimed.

\pdfbookmark[1]{Proof of Proposition \ref{Prop:positive}}{A3}

\subsection*{A.3. Proof of Proposition \ref{Prop:positive}}\label{sec:A3}

{Using Assumption \ref{A:p0}, we can choose some $k$ such that
$p(0)>C'_k(0)$. The right derivative with respect to $x_k$ of the
obejctive function in (\ref{equa:optimal}), evaluated at
$\bx=(0,0,\ldots,0)$, is $p(0)-C_k(0)>0$. Hence the optimal value of
the objective is strictly larger than the zero value obtained when
$\bx= (0,0,\ldots,0)$. Thus, the optimal social welfare is
positive.}

Now consider the social welfare achieved at a Cournot {candidate}
$\bx=(x_1,\ldots,x_N)$.  Because of Assumption \ref{A:p0},
Proposition \ref{Prop:trivial} applies, and we have $X>0$. For any
supplier $n$ such that $x_n>0$, {the necessary conditions}
{(\ref{equa:nece1})}  {and the fact that $p(\cdot)$ is nonincreasing
imply that}  ${C'_n}({x_n}) \le p\left( X \right)$. {Suppose that
${C'_n}({x_n}) = p\left( X \right)$ for every supplier $n$  with
$x_n>0$.}  Then,  {the necessary conditions
(\ref{equa:nece1})-\eqref{eq:necsec} imply that $\bx$ satisfies} the
{sufficient} optimality conditions in (\ref{equa:optimality}).
{Thus,} $\textbf{x}$ is socially optimal {and} the desired result
follows.

 {Suppose now that} there exists some supplier $n$  {with} $x_n>0$ and
${C'_n}({x_n}) < p\left( X \right)$.  {Then,}
\begin{equation}
\sum_{n = 1}^N {C'_n({x_n})x_n}  < Xp\left( X \right)  \le \int_0^{X
} {p(q)\, dq},
 \end{equation}
where the last inequality holds because the function $p(\cdot)$ is
nonincreasing. Since for each $n$, $C_n(\cdot)$ is convex and
nondecreasing, {with $C_n(0)=0$,} we have
\begin{equation}\label{equa:positive}
\sum_{n = 1}^N {C_n({x_n})}  \le \sum_{n = 1}^N {C'_n({x_n})x_n}  <
Xp\left( X \right)  \le \int_0^{X } {p(q)\,dq}.
 \end{equation}
Hence, the social welfare at the Cournot {candidate}, \(\int_0^{X }
{p(q)\, dq}  - \sum_{n = 1}^N {C_n({x_n})} \), is positive.

\pdfbookmark[1]{Proof of Proposition \ref{Prop:linear}}{A4}

\subsection*{A.4. Proof of Proposition
\ref{Prop:linear}}\label{sec:A4}

{We first observe that the vector $\mathbf{x}$ satisfies the
necessary conditions \eqref{equa:nece1}-\eqref{eq:necsec} {for} the
modified {model}. Hence,  the vector $\mathbf{x}$ is a Cournot
candidate for the modified {model.} It is also not hard to see that
Assumptions \ref{A:cost} and \ref{A:demand} are satisfied {by} the
modified {model}. Since $\alpha_n \ge C'_n(0)$ for every $n$,
Assumption \ref{A:optimal} {also} holds in the modified {model}.}

{We now show that Assumption \ref{A:p0} holds in the modified
{model}, i.e., {that} $p(0)>\min_n\{\alpha_n\} $.  Since the vector
$\mathbf{x}$ is a Cournot candidate {in} the original {model,}
Proposition \ref{Prop:trivial} implies that $X>0$, {so that} there
exists some $n$ {for which} $x_n>0$. From the necessary condition
\eqref{equa:nece1} we have that $\alpha_n \le p(X)$. Furthermore,
{if} $\alpha_n = p(X)$, then \(\partial_{-}p(X)=0 \), {and} the
convexity of $p(\cdot)$ implies that $\partial_{+}p(X)=0 $. Hence,
the vector $\mathbf{x}$ satisfies the optimality condition
\eqref{equa:optimality}, and thus is socially optimal in the
original {model.} Under {our} assumption that $\mathbf{x}$ is not
socially optimal in the original {model}, we conclude that $\alpha_n
< p(X)$, which implies that Assumption \ref{A:p0} holds in the
modified {model}. }

Let $\bx^S$ be an optimal solution to (\ref{equa:optimal}) in the
original model. Since $\bx^S$ satisfies the optimality conditions in
\eqref{equa:optimality} for the modified model, it remains a social
optimum in the modified model. {In the modified model, since
Assumptions \ref{A:cost}-\ref{A:p0} hold, the efficiency of the
vector $\mathbf{x}$ is well defined and given by}
\begin{equation}\label{eq:lbl}
\overline \gamma(\mathbf{x})=\dfrac{{\int_0^{{X} } {p(q)\,dq} -
\sum\nolimits_{n = 1}^N {{\alpha _n}x_n} }}{
 { \int_0^{{X^S} }
{p(q)\,dq}  - \sum\nolimits_{n = 1}^N {{\alpha _n}x^S_n} }}.
\end{equation}
{Note that the denominator on the right-hand side of \eqref{eq:lbl}
is the optimal social welfare and the numerator  is the social
welfare achieved at the Cournot candidate $\mathbf{x}$,  in the
modified model. Proposition \ref{Prop:positive} implies that both
the denominator and the numerator  on the right-hand side of
\eqref{eq:lbl} are positive. {In particular, $\overline
\gamma(\mathbf{x})>0$.}}

 Since $C_n(\cdot)$ is
convex, we have
$$
C_n(x^S_n)  - C_n(x_n)- \alpha_n (x^S_n -x_n) \geq 0,\qquad  n
=1,\ldots,N.
$$
Adding a nonnegative quantity to the denominator cannot increase the
ratio and, therefore,
\begin{equation}\label{equa:linear1}
{1 \geq\ } \gamma(\textbf{x}) =\dfrac{{ \int_0^{{X} } {p(q)\,dq} -
\sum\nolimits_{n = 1}^N {C_n({x_n})} }}{{\int_0^{{X^S} } {p(q)\,dq}
- \sum\nolimits_{n = 1}^N {C_n(x_n^S)} }} \ge \dfrac{{\int_0^{{X} }
{p(q)\,dq}  - \sum\nolimits_{n = 1}^N {C_n({x_n})} }}{ \int_0^{{X^S}
} {p(q)\,dq}-\sum\nolimits_{n = 1}^N {\left( {{\alpha _n}(x_n^S -
{x_n}) + {C_n}({x_n})} \right)} } {>0}.
\end{equation}

 Since $C_n(\cdot)$ is convex and nondecreasing, with
$C_n(0)=0$, we also have
\begin{equation}\label{equa:linear3}
\sum_{n = 1}^N {{C_n}({x_n})}  - \sum_{n = 1}^N {{\alpha _n}{x_n}}
\le 0.
\end{equation}
{Since the right-hand side of (\ref{equa:linear1}) {is in the
interval $(0,1]$,} adding the left-hand side of  Eq.\
\eqref{equa:linear3} (a nonpositive quantity) to both the numerator
and the denominator {cannot increase} the ratio, as long as the
numerator remains nonnegative. The numerator remains indeed
nonnegative because it becomes the same as the numerator in the
expression \eqref{eq:lbl} for $\overline{\gamma}(\bx)$. We obtain
\begin{eqnarray}
{\gamma(\bx)} &{\geq}&\dfrac{{\int_0^{{X} } {p(q)\,dq}  -
\sum\nolimits_{n = 1}^N {C_n({x_n})} }}{\int_0^{{X^S} }
{p(q)\,dq}-\sum\nolimits_{n = 1}^N {\left( {{\alpha
_n}(x_n^S - {x_n}) + {C_n}({x_n})} \right)}} \nonumber \\
&  \ge& \dfrac{{\int_0^{{X} } {p(q)\,dq}  - \sum\nolimits_{n = 1}^N
{C_n({x_n})} + \left( {\sum\nolimits_{n = 1}^N {{C_n}({x_n})}  -
\sum\nolimits_{n = 1}^N {{\alpha _n}{x_n}} } \right)
}}{\int_0^{{X^S}} {p(q)dq}-\sum\nolimits_{n = 1}^N {\left( {{\alpha
_n}(x_n^S - {x_n}) + {C_n}({x_n})} \right)}+ \left( {\sum\nolimits_{n = 1}^N {{C_n}({x_n})}  - \sum\nolimits_{n = 1}^N {{\alpha _n}{x_n}} } \right)} \nonumber \\
&=&\dfrac{{\int_0^{{X} } {p(q)\,dq}  - \sum\nolimits_{n = 1}^N
{{\alpha _n}x_n} }} {{\int_0^{{X^S} } {p(q)\,dq}  - \sum\nolimits_{n
= 1}^N {{\alpha _n}x^S_n} }}
\nonumber \\
\old{&\ge& \dfrac{{\int_0^{{X} } {p(q)\,dq}  - \sum\nolimits_{n =
1}^N {{\alpha _n}x_n} }}{{{{{\sup} }_{Y \ge 0}}\left\{ {\int_0^Y
{p(q\,)dq - \left( {{{\min }_n}{\alpha _n}} \right)Y} } \right\}}}
\label{eq:rat2}\\}
 &{=}& {\overline{\gamma}(\bx)}. \nonumber
\label{equa:linear4}
 \end{eqnarray}
The desired result follows. \old{from {(\ref{eq:lbl})},
(\ref{equa:linear1}) and (\ref{equa:linear4}).}}

\pdfbookmark[1]{Proof of Proposition \ref{Prop:less}}{A5}

\subsection*{A.5. Proof of Proposition
\ref{Prop:less}}\label{sec:A5}

By Assumption \ref{A:p0},  $p(0) > \min_n\{C'_n(0)\}$. According to
Proposition \ref{Prop:trivial}, we have $X>0$. Since $p(\cdot)$ is
nonincreasing, {the} conditions in  (\ref{equa:nece1}) imply that
$$
C'_n(x_n^{}) \le p\left( X \right),\qquad {\rm{if}}\; x_n>0.
$$
Suppose that $X \geq  X^S$. Since the inverse demand function is
nonincreasing and $p(X)\ne p(X^S)$, we have $p (X) < p(X^S)$ {and
$X>X^S$.}
 For
every supplier $n$ {with} $x_n>0$, we have
$$
C'_n(x_n^{}) \le p\left( X \right)  < p(X^S) \le C'_n(x_n^{S}),
$$
where the last inequality follows from (\ref{equa:optimality}).
Since, $C_n(\cdot)$ is  convex, the above inequality implies that
$$
x_n < x_n^S,\qquad {\rm{if}}\; x_n>0,
$$
{from which we obtain $X<X^S$. Since we had assumed that $X\geq
X^S$, we have  a contradiction.}

{The preceding argument establishes that $X<X^S$. Since $p(\cdot)$
is noincreasing and $p(X)\neq p(X^S)$, we must have $p(X)>p(X^S)$.}

\pdfbookmark[1]{Proof of Proposition \ref{Prop:equal}}{A6}

\subsection*{A.6. Proof of Proposition \ref{Prop:equal}}\label{sec:A6}

Since Assumption \ref{A:p0} holds, Proposition \ref{Prop:trivial}
{implies} that $X>0$. Since $p(\cdot)$ is convex, {Proposition
\ref{Prop:derivative} shows that $p(\cdot)$ is differentiable at $X$
and the necessary conditions in (\ref{equa:nece}) are satisfied.}

{We will now} prove that $p'(X)=0$. Suppose not, {in which case} we
have $p'(X)<0$.
 For every $n$ such that $x_n>0$, from the convexity of $C_n(\cdot)$
 and the conditions in
(\ref{equa:nece}), we have
$$
C'_n(0) \le C'_n(x_n) < p(X) =p(X^S).
$$
{Then, the social optimality conditions \eqref{equa:optimality}
imply} that $x_n^S>0$. {It follows that}
$$
C'_n(x_n) <p(X^S)=C'_n(x^S_n),
$$
where the last equality follows from the optimality conditions in
(\ref{equa:optimality}). Since $C_n(\cdot)$ is convex,
 {we conclude} that $ x_n < x^S_n. $ Since
{this is true} for every $n$ such that $x_n>0$, we {obtain} $ X <
X^S. $ Since the function $p(\cdot)$ is nonincreasing, {and we have}
$p'(X)<0$ and $X<X^S$, {we obtain $p(X)>p(X^S)$, which contradicts
the assumption that} $p(X)=p(X^S)$. {The contradiction shows} that
$p'(X)=0$.

Since $p'(X)=0$ and the Cournot {candidate} $\bx$ satisfies the
necessary conditions in (\ref{equa:nece}), it {also satisfies} the
optimality conditions in (\ref{equa:optimality}). Hence, $\bx$ is
socially optimal and the desired result follows.

\pdfbookmark[0]{Proofs of the results in Sections
\ref{sec:affine}-\ref{sec:app}}{B}

\section*{Appendix B: Proofs of the results in Sections \ref{sec:affine}-\ref{sec:app}}

\pdfbookmark[1]{Proof of Theorem \ref{thm:linear}}{}

\hypertarget{B1}{}

\subsection*{B.1. Proof of Theorem \ref{thm:linear}}\label{sec:B1}

{We note that part (d) is an immediate consequence of the expression
for $g(\beta)$, and we concentrate on the remaining parts.} {Since
the inverse demand function is convex, Proposition
\ref{Prop:derivative} shows that any Cournot equilibrium satisfies
the necessary conditions (\ref{equa:nece}). If $X>b/a$, then \old{we
have that} $p(X)=p'(X)=0$. {In that case,} the necessary conditions
(\ref{equa:nece}) imply the optimality conditions
(\ref{equa:optimality}). We conclude that $\bx$ is socially optimal.
}

We now assume that $X\le b/a$.  Proposition \ref{Prop:derivative}
shows that $p'(X)$ exists, and thus $X< b/a$. Since $p'(X) =-a <0$,
Proposition \ref{Prop:equal} {implies} that $p(X) \ne p(X^S)$, for
any social optimum $\bx^S$. Hence, $\bx$ is not socially optimal.

{As discussed in Section \ref{sec:linear-worst}, to derive a lower
bound, it suffices to consider the case of linear cost functions,
and obtain a lower bound on the worst case efficiency of Cournot
candidates, that is, vectors that satisfy
(\ref{equa:nece1})-(\ref{eq:necsec}).} We {will} therefore assume
that $C_n(x_n)=\alpha_n x_n$ for {every} $n$. Without loss of
generality, we {also} assume that $\alpha_1 = \min_n \{\alpha_n\}$.
{We consider separately the two cases where
$\alpha_1=0$ or $\alpha_1>0$, respectively.\\

\noindent  \textbf{The case where $\alpha_1=0$}

{In this case, the socially optimal supply is $X^S=b/a$ and the
optimal social welfare is}
\[
\int_0^{{b}/{a}} {p(q)\,dq}  - 0 = \int_0^{{b}/{a}} {( - ax + b)\,
dx = \frac{{{b^2}}}{{2a}}}.
 \]
{Note also that $\beta=aX/b$.}

{Let $\bx$ be a Cournot candidate. Suppose first that $x_1=0$. In
that case, the necessary conditions $0=\alpha_1\geq p(X)$ imply that
$p(X)=0$. For $n\neq 1$, if $x_n>0$, the necessary conditions yield
$0\leq \alpha_n=p(X)-x_n a=-x_n a$, which implies that $x_n=0$ {for}
all $n$. But then, $X=0$, which contradicts the fact $p(X)=0$. We
conclude that $x_1>0$.}

{Since {$x_1>0$,} the necessary conditions {\eqref{equa:nece}}}
yield ${0}=\alpha_1=b-aX- ax_1$, so that
\begin{equation}\label{equa:thm_linear2}
 x_1 = -X+\dfrac{b}{a}.
 \end{equation}
In particular, $X< b/a =X^S$, and $\beta < 1$. Furthermore,
$$0\leq \sum_{n=2}^N x_n=X-x_1 =2X -\dfrac{b}{a},$$
from which we conclude that $\beta=aX/b \geq 1/2$. }

{Note that for $n=1$ we have $\alpha_n x_n=0$. For $n\neq 1$,
whenever $x_n>0$, we have $\alpha_n = p(X)-ax_n$, {so that}
$\alpha_n x_n = (p(X)-ax_n)x_n$. {The} social welfare associated
with $\bx$ is
\begin{eqnarray}
\int_0^X {p(q)\,dq - \sum_{n = 1}^N {{\alpha _n}{x_n}} } &=&
bX-\frac{1}{2}aX^2- \sum_{n=2}^N (p(X)-ax_n) x_n\nonumber\\
&\geq& bX-\frac{1}{2}aX^2- p(X)\sum_{n=2}^N  x_n \label{eq:loose}\\
&=&bX-\frac{1}{2}aX^2-  (b-aX)(X-x_1)\nonumber\\
&=&bX-\frac{1}{2}aX^2-  (b-aX)\Big(2X-\frac{b}{a}\Big)\nonumber\\
&=& \dfrac{3}{2}a{X^2} + \dfrac{{{b^2}}}{a} - 2bX.\nonumber
\end{eqnarray}
We divide by $b^2/2a$ (the {optimal} social welfare) and obtain
$$\gamma(X) \geq
\dfrac{{2a}}{{{b^2}}}\left( {\dfrac{3}{2}a{X^2} + \dfrac{{{b^2}}}{a}
- 2bX} \right) =3 \beta^2 - 4 \beta +2.$$ This proves the claim in
part (b) of the theorem.}\\

\noindent  \textbf{Tightness}

{We observe that the lower bound on the social welfare associated
with $\bx$ made use, {in Eq.\ \eqref{eq:loose},} of the inequality
$\sum_{n=2}^Nx_n^2\geq 0$. This inequality becomes an equality,
asymptotically, if we let $N\to\infty$ and $x_n{=O(1/N)}$ for $n\neq
1$. This motivates the proof of tightness (part (c) of the theorem)
given below.}

{We are given some $\beta\in [1/2,1)$ and construct an $N$-supplier
model ($N \ge 2$) with $a=b=1$, and the following linear cost
functions:}
$$
C^N_1(x_1)=0, \qquad C^N_n(x_n)= \left( p(X)- \dfrac{2X-{1}}{N-1}
\right) x_n, \qquad n=2,\ldots,N.
$$
It can be verified that {the variables}
$$
x_1= -X +b/a, \qquad x_n= \dfrac{2X-b/a}{N-1}, \quad n=2,\ldots,N,
$$
form a Cournot equilibrium. A simple calculation
(consistent with the intuition given earlier) shows that as $N$
increases to infinity, the sum $\sum_{n=2}^Nx_n^2$ goes to zero and
the associated efficiency converges to $g(\beta)$.\\

\noindent  \textbf{The case where  $\alpha_1>0$}

We now consider the case where $\alpha_n>0$ for every $n$. {By
rescaling the cost coefficients and permuting the supplier indices,
we can} assume that $\min_n \{\alpha_n\}=\alpha_1=1$.  By Assumption
\ref{A:p0}, we have $b>1$.

At the social optimum, we must have $p(X^S)=\alpha_1=1$ and thus
$X^S=(b-1)/a$. The optimal social welfare is
\[
\dfrac{(b-1)(b+1)}{2a}-\dfrac{b-1}{a}=\dfrac{(b-1)^2}{2a}.
\]
{Note also that $\beta=aX/(b-1)$.}

{Similar to the proof for the case where $\alpha_1=0$, we can show
that $x_1>0$ and therefore $1 = p (X) -
 a{x_1}=b-aX-ax_1$, so that}
\[
 x_1 = -X+\dfrac{b-1}{a} >0,
 \]
which implies that $\beta<1$.
In particular,
\[
X < \dfrac{b-1}{a}{=X^S.}
 \]
{Furthermore,
$$0\leq \sum_{n=2}^N x_n =X-x_1= 2X -\dfrac{b-1}{a},$$
from which we conclude that $\beta=aX/(b-1)\geq 1/2$.}

{A calculation similar to the one for the case where $\alpha_1=0$
yields
\begin{eqnarray*}
\int_0^X {p(q)\,dq - \sum_{n = 1}^N {{\alpha _n}{x_n}} } &=&
bX-\frac{1}{2}aX^2{- x_1} -\sum_{n=2}^N (p(X)-ax_n) x_n\\
&\geq& bX-\frac{1}{2}aX^2{+X-\frac{b-1}{a}} - p(X)\sum_{n=2}^N  x_n\\
&=&bX-\frac{1}{2}aX^2{+X-\frac{b-1}{a}}  -(b-aX)(X-x_1)\\
&=&bX-\frac{1}{2}aX^2 {+X-\frac{b-1}{a}}  -(b-aX)\Big(2X-\frac{b-1}{a}\Big)\\
 &=& \dfrac{3}{2}a{X^2} + \dfrac{{{(b-1)^2}}}{a} - 2(b-1)X.
\end{eqnarray*}
 After dividing with the value of
the social welfare, we obtain $g(\beta)$, as desired.}

\pdfbookmark[1]{Proof of Theorem \ref{thm:convex}}{B2}

\hypertarget{B2}{}

\subsection*{B.2. Proof of Theorem
\ref{thm:convex}}\label{sec:B2}

 {Let $\bx$ be a Cournot candidate.} According to Proposition
\ref{Prop:equal},  if $p(X)=p(X^S)$, then the efficiency of the
Cournot {candidate} must equal one, {which proves part (a). To prove
part (b), we assume that} $p(X) \ne
 p(X^S)$. {By} Proposition \ref{Prop:sameprice}, \old{we conclude that} the Cournot candidate $\mathbf{x}$ cannot be socially optimal, {and, therefore, $\gamma(\bx)<1$.}

{We have shown in Proposition \ref{Prop:linear} that if all cost
functions are replaced by linear ones, the vector $\mathbf{x}$
remains a Cournot candidate, and Assumptions \ref{A:cost}-\ref{A:p0}
still hold.  Further, the efficiency of $\bx$ {cannot increase}
after all cost functions are replaced by linear ones. Thus,} {to
lower bound} the worst case efficiency loss, {it suffices to derive
a lower bound for the efficiency of Cournot candidates for the case
of} linear cost functions. {We therefore assume that}
$C_n(x_n)=\alpha_n x_n$ for each $n$. Without loss of generality, we
{further} assume that $\alpha_1 = \min_n \{\alpha_n\}$. {Note that,
by Assumption \ref{A:p0}, we have $p(0)>\alpha_1$.} We will prove
the theorem by considering {separately} the cases where $\alpha_1=0$
and $\alpha_1>0$.

{We will rely on} Proposition \ref{Prop:piecewise_linear},
{according to which} the efficiency of a Cournot candidate $\bx$ is
lower bounded by the efficiency $\gamma^0(\bx)$ {of $\bx$ in a model
involving the piecewise linear and convex inverse demand function
function of the form in the definition of $p^0(\cdot)$. Note that}
since $p(X) \ne
 p(X^S)$, we have that $d>0$. For conciseness, we let $y=p(X)$ throughout the
 proof.\\

\old{The main idea of the proof, as in \cite{J04}, is to first use
the results of Section \ref{sec:linear-worst} to restrict to linear
cost functions; then use the result of Proposition
\ref{Prop:piecewise_linear} to restrict to a finitely parametrized
family of inverse demand functions; and finally search (and
optimize) for a worst-case model, over a finite-dimensional family
of cost and demand functions, which can be done in closed form.}

\noindent \textbf{The case $\alpha_1=0$}

Let $\mathbf{x}$ be a Cournot candidate {in} the original {model}
with linear cost functions  and the inverse demand function
$p(\cdot)$. {By} Proposition \ref{Prop:derivative}, \old{we conclude
that} $\mathbf{x}$ satisfies the necessary conditions
(\ref{equa:nece}), with respect to the original inverse demand
function $p(\cdot)$.  Suppose first that $x_1 = 0$. The second
inequality in (\ref{equa:nece}) implies that $p(X)=0$. {On the other
hand, Assumption \ref{A:p0} and} Proposition \ref{Prop:trivial}
{imply} that $X>0$. {Thus,} there exists some $n$ such that $x_n>0$.
The first equality in (\ref{equa:nece}) yields,
$$
0\le \alpha_n=p(X) + x_n p'(X)  {=x_n p'(X)} \le 0,
$$
which implies {that} $ p'(X)=0$. Then, the vector $\mathbf{x}$
satisfies the optimality conditions in (\ref{equa:optimality}), and
{is thus} socially optimal in the original {model}. This contradicts
\old{with} the fact that $p(X)\ne p(X^S)$ {and shows that we must
have} $x_1 > 0$.

If $p'(X)$ {were equal to zero,} then the necessary conditions
(\ref{equa:nece}) {would} imply the optimality conditions
(\ref{equa:optimality}), and $\mathbf{x}$ {would be} socially
optimal in the original {model}. Hence, we {must} have $p'(X)<0$
{and} $c>0$. The first equality in (\ref{equa:nece}) yields $y >0$,
\(
 x_1 = y/c \), {and $X\geq y/c$. We also}  have
\begin{equation}\label{equa:thm51}
0 \le \sum_{n=2}^N x_n =X-\frac{y}{c}.
\end{equation}

{From Proposition \ref{Prop:piecewise_linear}, the efficiency
{$\gamma^0(\bx)$ of $\bx$ in the modified model}
 cannot be more than {its efficiency $\gamma(\bx)$} in
the original {model}. Hence, to prove the second part of the
theorem, it suffices to show that $\gamma^0(\mathbf{x}) \ge
f(\overline c)$, for any Cournot candidate with $c/d=\overline c$.}

{The optimal social welfare in the modified {model} is
\begin{equation}\label{equa:optimal_welfare0}
\int_0^{\infty} {p^0(q)dq}  - 0 = \int_0^{X+y/d} {p^0(q)dq}  - 0 =
\dfrac{{{y^2}}}{{2d}}+ \dfrac{(2y+cX)X}{2}.
 \end{equation}}

{Note that for $n = 1$ we have $\alpha_n x_n = 0$. For $n \ge 2$,
whenever $x_n > 0$, from the first equality in (\ref{equa:nece}) we
have $\alpha_n=y-x_n c$ and $\alpha_n x_n=(y-x_n c)x_n$. Hence, in
the modified {model}, the social welfare associated with
$\mathbf{x}$ is
\[
\begin{array}{l}
\displaystyle
\int_0^X {{p^0}(q)dq - \sum\nolimits_{n = 1}^N {{\alpha _n}{x_n}} }  = \dfrac{{(2y + cX)X}}{2} - \sum\nolimits_{n = 2}^N {(y - {x_n}c){x_n}} \\
\displaystyle\;\;\;\;\;\;\;\;\;\;\;\;\;\;\;\;\;\;\;\;\;\;\;\;\;\;\;\;\;\;\;\;\;\;\;\;\;\;\;\;\;\;\;\;\;\;\;\;\;\;\;\;\;\;\;\;\;\;\;
\ge \dfrac{{(2y + cX)X}}{2} - y \sum\nolimits_{n = 2}^N {x_n} \\
\displaystyle\;\;\;\;\;\;\;\;\;\;\;\;\;\;\;\;\;\;\;\;\;\;\;\;\;\;\;\;\;\;\;\;\;\;\;\;\;\;\;\;\;\;\;\;\;\;\;\;\;\;\;\;\;\;\;\;\;\;\;
= \dfrac{{(2y + cX)X}}{2} - y (X-y/c) \\
\displaystyle\;\;\;\;\;\;\;\;\;\;\;\;\;\;\;\;\;\;\;\;\;\;\;\;\;\;\;\;\;\;\;\;\;\;\;\;\;\;\;\;\;\;\;\;\;\;\;\;\;\;\;\;\;\;\;\;\;\;\;
= cX^2/2 +y^2/c .\\
\end{array}
\]
Therefore, \old{we have}
\begin{equation}\label{equa:thm52}
\gamma^0(\mathbf{x}) \ge \dfrac{cX^2/2 +y^2/c}{y^2/(2d)+
(2y+cX)X/2}.
\end{equation}}
{Note that $c$, $d$, $X$, and $y$ are {all} positive. Substituting
$\overline c=c/d$ and $\overline y=cX/y$ {in} (\ref{equa:thm52}), we
{obtain}
\begin{equation}\label{equa:thm53}
{\gamma^0(\mathbf{x}) \geq} \dfrac{cX^2/2 +y^2/c}{y^2/(2d)+
(2y+cX)X/2}= \dfrac{c^2X^2/2 +y^2}{y^2c/(2d)+cXy+c^2X^2/2 } =
{\dfrac{\overline y^2 +2}{\overline c+2\overline y+\overline y^2 }.}
\end{equation}
\old{From (\ref{equa:thm51}) we have that $\overline y \ge 1$, and
\begin{equation}\label{equa:thm533}
 \gamma^0(\mathbf{x}) \ge
\dfrac{\overline y^2 +2}{\overline c+2\overline y+\overline y^2 }.
\end{equation}}}

{We have shown earlier that $X\geq y/c$, so that $\overline y\geq
1$. } {On the interval $\overline y \in [1,\infty)$, the minimum
value of the right hand side of (\ref{equa:thm53}) is attained at
\[
\overline y = \max \left\{ {\frac{{2-\overline  c + \sqrt
{{{\overline c }^2} - 4\overline c  + 12} }}{2},1} \right\}
\buildrel \Delta \over = \phi,
\]
and thus,
\[
\gamma^0(\mathbf{x}) \ge  \dfrac{\phi^2 +2}{\phi^2+ 2 \phi+
\overline c} =f(\overline c).
\]}

\noindent \textbf{The case  $\alpha_1>0$}

{We now consider the case where $\alpha_n > 0$ for every $n$. By
rescaling the cost coefficients and permuting the supplier indices,
we can assume that $\min_n\{ \alpha_n\} = \alpha_1 = 1$.  Suppose
first that $x_1 = 0$. The second inequality in (\ref{equa:nece})
implies that $p(X)\le 1$. Proposition \ref{Prop:trivial} {also}
implies that $X>0$ {so that} there exists some $n$ {for which}
$x_n>0$. The first equality in (\ref{equa:nece}) yields,
$$
\alpha_n=p(X) + x_n p'(X) {\leq p(X)} \le 1.
$$
Since $\alpha_{{n}} \ge 1$, we {obtain} $p(X)=1$  and $ p'(X)=0$.
Then, the vector $\mathbf{x}$ satisfies the optimality conditions in
(\ref{equa:optimality}), and thus is socially optimal in the
original {model}. {But this would contradict}  the fact that
$p(X)\ne p(X^S)$. We conclude that $x_1 > 0$.}

{If $p'(X)$ {were equal to zero,} then the necessary conditions
(\ref{equa:nece}) {would} imply the optimality conditions
(\ref{equa:optimality}), and $\mathbf{x}$ {would be} socially
optimal in the {modified} game. Therefore, we {must} have $p'(X)<0$
{and} $c>0$. The first equality in (\ref{equa:nece}) yields $y >1$,
\(
 x_1 = (y-1)/c \), {and $X\geq (y-1)/c$.  We also} have
\begin{equation}\label{equa:thm54}
0 \le \sum_{n=2}^N x_n =X- \frac{y-1}{c},
\end{equation}
 from which we conclude that $X \ge
(y-1)/c$.}

From Proposition \ref{Prop:piecewise_linear}, the efficiency
$\gamma^0(\mathbf{x})$ {of} the vector $\mathbf{x}$ in the modified
{model} cannot be more than {its efficiency $\gamma(\bx)$} in the
original {model}. {So,}  it suffices to consider the efficiency of
$\mathbf{x}$ in the modified {model}. From the optimality conditions
(\ref{equa:optimality}), we have that $p^0(X^S)=1$, and thus, {using
the definition of $d$,}
$$
X^S=X+ \frac{y-1}{d}.
$$
The optimal social welfare in the modified {model} is
\[
\int_0^{X^S} {p^0(q)\, dq}  - X^S = \dfrac{{{y^2-1}}}{{2d}}+
\dfrac{(2y+cX)X}{2}-X-\dfrac{y-1}{d}=\dfrac{{{{(y - 1)}^2}}}{{2d}} +
X(y - 1) + \dfrac{{c{X^2}}}{2}.
 \]
Note that for $n = 1$ we have $\alpha_n x_n = x_1$. For $n \ge 2$
{and} whenever $x_n > 0$, from the first equality in
(\ref{equa:nece}) we have $\alpha_n=y-x_n c$ and $\alpha_n
x_n=(y-x_n c)x_n$. Hence, in the modified model, the social welfare
associated with $\mathbf{x}$ is
\[
\begin{array}{l}
\displaystyle
\int_0^X {{p^0}(q)\,dq - \sum\nolimits_{n = 1}^N {{\alpha _n}{x_n}} }  = Xy+ c X^2/2 - x_1-\sum\nolimits_{n = 2}^N {(y - {x_n}c){x_n}} \\
\displaystyle\;\;\;\;\;\;\;\;\;\;\;\;\;\;\;\;\;\;\;\;\;\;\;\;\;\;\;\;\;\;\;\;\;\;\;\;\;\;\;\;\;\;\;\;\;\;\;\;\;\;\;\;\;\;\;\;\;\;\;\;
\ge  Xy+ c X^2/2 - x_1- y \sum\nolimits_{n = 2}^N {x_n} \\
\displaystyle\;\;\;\;\;\;\;\;\;\;\;\;\;\;\;\;\;\;\;\;\;\;\;\;\;\;\;\;\;\;\;\;\;\;\;\;\;\;\;\;\;\;\;\;\;\;\;\;\;\;\;\;\;\;\;\;\;\;\;\;
=  X(y-1)+ c X^2/2 - (y-1) \sum\nolimits_{n = 2}^N {x_n} \\
\displaystyle\;\;\;\;\;\;\;\;\;\;\;\;\;\;\;\;\;\;\;\;\;\;\;\;\;\;\;\;\;\;\;\;\;\;\;\;\;\;\;\;\;\;\;\;\;\;\;\;\;\;\;\;\;\;\;\;\;\;\;\;
= X(y-1)+ c X^2/2 - (y-1) (X-(y-1)/c) \\
\displaystyle\;\;\;\;\;\;\;\;\;\;\;\;\;\;\;\;\;\;\;\;\;\;\;\;\;\;\;\;\;\;\;\;\;\;\;\;\;\;\;\;\;\;\;\;\;\;\;\;\;\;\;\;\;\;\;\;\;\;\;\;
= cX^2/2 +(y-1)^2/c .\\
\end{array}
\]
Therefore, \old{we have}
\begin{equation}\label{equa:thm55}
\gamma^0(\mathbf{x}) \ge \dfrac{cX^2/2 +(y-1)^2/c}{(y-1)^2/(2d)+
X(y-1)+cX^2/2}.
\end{equation}

Note that $c$, $d$, $X$, and $y-1$ are {all} positive. Substituting
$\overline c=c/d$ and $\overline y=(cX)/(y-1)$ {in}
(\ref{equa:thm55}), we {obtain}
\begin{equation}\label{equa:thm56}
 \gamma^0(\mathbf{x}) \ge
\dfrac{2\overline y^2 +1}{\overline c\hspace{1pt} \overline
y^2+2\overline y+1}.
\end{equation}
From (\ref{equa:thm54}) we have that $\overline y \ge 1$. On the
interval $\overline y \in [1,\infty)$, the minimum value of the
right hand side of (\ref{equa:thm56}) is attained at
\[
\overline y= \min \left\{ {\frac{{2-\overline c   + \sqrt
{{{\overline c }^2} - 4\overline c  + 12} }}{2},1} \right\}
\buildrel \Delta \over = \phi,
\]
and thus,
\[
\gamma^0(\mathbf{x}) \ge \dfrac{\phi^2 +2}{\phi^2+ 2 \phi+ \overline
c}=f(\overline c).
\]

\pdfbookmark[1]{Proof of Corollary \ref{Coro:mu}}{B3}

\hypertarget{B3}{}

\subsection*{B.3. Proof of Corollary \ref{Coro:mu}}\label{sec:B3}

Let $\bx$ be a Cournot candidate. Since the inverse demand function
is convex, we have that $\mu \ge 1$. If $X>Q$, then \old{we have}
$p(X)=p'(X)=0$. The necessary conditions
\eqref{equa:nece1}-\eqref{eq:necsec} imply the optimality condition
in (\ref{equa:optimality}), and thus $\gamma(\bx)=1>f(\mu)$.

Now consider the case $X \le Q$. If $p(X)=p(X^S)$ for some social
optimum $\bx^S$, then Proposition \ref{Prop:equal} implies that
$\gamma(\bx)=1>f(\mu)$. Otherwise, for any social optimum $\bx^S$,
we have that $\overline c=c/d \le \mu$. Theorem \ref{thm:convex}
shows that the efficiency of every Cournot candidate cannot be less
than $f(\overline c)$. The desired result then follows from the fact
that $f(\overline c)$ is decreasing in $\overline c$.

\hypertarget{B4}{}

\pdfbookmark[1]{Proof of Corollary \ref{Coro:st_lower_bound}}{B4}

\subsection*{B.4. Proof of Corollary
\ref{Coro:st_lower_bound}}\label{sec:B4}

Let $\bx$ and ${\bx}^S$ be a Cournot candidate and  a social
optimum, respectively. If $p(X)=p(X^S)$, for some social optimum
$\bx^S$, then $\gamma(\bx)=1$ and the desired result {holds
trivially.} Now {suppose that} $p(X)\ne p(X^S)$. We first derive an
upper
 bound on the aggregate supply  at a social optimum, and then establish
 a lower bound on the aggregate supply at a Cournot
candidate. The desired results will follow from the fact that the
function $f(\cdot)$ is strictly decreasing.

\emph{Step 1: There exists a social optimum with an aggregate supply
no more than $s$.}

 According to Proposition \ref{Prop:positive}
we have $X^S>0$ {and} there exists a supplier $n$ such that
$x^S_n>0$. From the optimality conditions (\ref{equa:optimality}) we
have $ p(X^S)=C'_n(x^S_n)$, which implies that $p(X^S) \ge C'_n(0)$,
due to the convexity of {the} cost functions. We conclude that
\begin{equation}\label{equa:s_lower_bound0}
p(X^S) =C'_n(x^S_n) \ge C'_n(0) \ge \min_n C'_n(0).
\end{equation}

If $p(X^S) > \min_n C'_n(0)$, then from the definition of $s$ in
(\ref{equa:t}),  and the assumption that $p(\cdot)$ is
nonincreasing, we have that $ X^S < s$.

If $p(X^S) = \min_n C'_n(0)$, by \eqref{equa:s_lower_bound0} we know
that for any $n$ such that $x^S_n>0$, {we must have}
$C'_n(x^S_n)=C'_n(0)=p(X^S)$. Since $C_n(\cdot)$ is convex, we
conclude that $C_n(\cdot)$ is actually linear on the interval
$[0,x^S_n]$. We now argue that there exists a social optimum $
\bx^S$ such that $ X^S \le s$. If $X^S \le s$, then we are done.
Otherwise,  we have $X^S
>s$. Let $\mathcal {N}$ be the set of the ind{ic}es of suppliers who
produce a positive quantity at $\bx^S$. Since $p(\cdot)$ is
nonincreasing, and $p(s)=\min_n C'_n(0)=p(X^S)$, we know that for
any $q \in [s,X^S]$, $p(q)=C'_n(0)$ for every $ n \in \mathcal {N}$.
Combing with the fact that for each supplier $n$ in the set
$\mathcal {N}$, $C_n(\cdot)$ is
 linear on the interval $[0,x^S_n]$, we have
\[
\int_s^{{X^S}} {p(q)\,dq = ({X^S} - s)C'_{n}(x),\qquad \forall n \in
\mathcal {N},\;\;\;\forall x \in [0,x_n^S)},
\]
from which we conclude that the vector, $(s/X^S) \cdot \bx^S$,
yields the same social welfare as $\bx^S$, and thus is socially
optimal. Note that the aggregate supply at $(s/X^S) \cdot \bx^S$ is
$s$.

 If $p(X)=p( X^S)$, then $\gamma(\bx)=1$
and the desired result {holds} trivially. Otherwise, since
$p(\cdot)$ is nonincreasing and convex, we have
\begin{equation}\label{equa:s_lower_bound1}
\left| \partial_- p(s) \right| \le \left| (p(X^S)-p(X))/(X^S-X)
\right| {=d}.
\end{equation}

\emph{Step 2: The aggregate supply at a Cournot candidate $\bx$ is
at least $t$.}

Since $p(\cdot)$ is convex, $\bx$ satisfies the necessary conditions
in (\ref{equa:nece}). {Therefore,}
\begin{equation}\label{equa:t_lower_bound1}
C'_n(x_n) \ge p(X)+ x_n p'(X),\qquad \forall n.
\end{equation}
Since $X \ge x_n$, we have
\begin{equation}\label{equa:t_lower_bound2}
C'_n(x_n) \le C'_n(X), \qquad Xp'(X) \le x_n p'(X),
\end{equation}
where the first inequality follows from the convexity of {the} cost
functions, and the second one is true because $p'(X)\le 0$. Combing
(\ref{equa:t_lower_bound1}) and (\ref{equa:t_lower_bound2}), we have
$$
C'_n(X) \ge p(X)+ X p'(X),\qquad \forall n,
$$
which implies that $X \ge t$. Since $p(\cdot)$ is nonincreasing and
convex, we have
\begin{equation}\label{equa:st_lower_bound1}
{c=\left| p'(X) \right| \leq \left| \partial_+ p(t) \right| .}
\end{equation}
Since $\partial_-p(s)<0$, {Eqs.} (\ref{equa:s_lower_bound1}) and
(\ref{equa:st_lower_bound1}) {yield}
$$
\overline c =c/d \le \partial_+p(t)/
\partial_-p(s).
$$
The desired result \old{then} follows from Theorem \ref{thm:convex},
and the fact that $f(\cdot)$ is strictly decreasing.

\hypertarget{C}{}

\pdfbookmark[0]{Proofs of the results in Section \ref{sec:mono}}{C}

\section*{Appendix C: Proofs of the results in Section \ref{sec:mono}}

\pdfbookmark[1]{Proof of Theorem \ref{thm:mono}}{C1}

\subsection*{C.1. Proof of Theorem
\ref{thm:mono}}

According to the {discussion} in Section \ref{sec:mono}, we only
need to lower bound the efficiency of a Cournot equilibrium $\bx$
{in} a model with $N=1$. Since $N=1$, we can identify the vectors
$\bx$ and $\bx^S$ with the scalars $X$ and $X^S$.
 If $p(X)=p(X^S)$, then according to Proposition \ref{Prop:equal},
the efficiency of the Cournot equilibrium, ${X}$, must equal one,
which establishes part (a).

 We now turn to the proof of
part (b), and we assume that $p(X) \ne
 p(X^S)$. According to Proposition \ref{Prop:sameprice}, we know that $\bx$ cannot be socially optimal.
We will consider separately the cases where $\alpha_1=0$ and
$\alpha_1>0$.

We will again rely on Proposition \ref{Prop:piecewise_linear},
according to which the efficiency of a Cournot candidate $\bx$ is
lower bounded by the efficiency $\gamma^0(\bx)$ of $\bx$ in a model
involving the piecewise linear and convex inverse demand function
$p^0(\cdot)$. Note that since $p(X) \ne
 p(X^S)$, we have that $d>0$.  {As shown in} the proof of Theorem
\ref{thm:convex}, we {have}  $p'(X)<0$, i.e., $c>0$. For
conciseness, we let $y=p(X)$ throughout the proof.

\noindent \textbf{The case $\alpha_1=0$}

Applying conditions \eqref{equa:nece} to the supplier we have
 \( X = y/c. \)
From Proposition \ref{Prop:piecewise_linear}, it suffices to show
that $\gamma^0(\mathbf{x}) \ge 3/(3+\overline c)$. The optimal
social welfare in the modified model is
\begin{equation}\label{equa:optimal_welfare0_mono}
\int_0^{\infty} {p^0(q)\,dq}  - 0 = \int_0^{X+y/d} {p^0(q)\,dq}  - 0
= \dfrac{{{y^2}}}{{2d}}+ \dfrac{(2y+cX)X}{2}.
 \end{equation}
In the modified model, the social welfare associated with
$\mathbf{x}$ is
$$
\int_0^{X^P} {p^0(q)\,dq}  - 0 =
 \dfrac{(2y+cX)X}{2}.
$$
Therefore,
$$
\gamma^0(\mathbf{x}) = \dfrac{(2y+cX)X/2}{y^2/(2d)+ (2y+cX)X/2} =
\dfrac{3}{3+\overline c},
$$
where the last equality is true because  $xc=y$.\\

\noindent \textbf{Tightness}

Consider the model introduced in the proof {of} part (c) of Theorem
\ref{thm:linear}. The inverse demand function is
$p(q)=\max\{1-q,0\}$. The supplier's cost function is identically
zero, i.e., $ C_1(x_1)=0$. The profit maximizing
 output is $ x_1= 1/2 $.
We observe that $\gamma(\bx) = 3/4$. \\

\noindent \textbf{The case  $\alpha_1>0$}

We now consider the case where $\alpha_1 > 0$. By rescaling the cost
coefficients and permuting the supplier indices, we can assume that
$\alpha_1 = 1$. Applying conditions \eqref{equa:nece} to the
supplier,  we obtain
 \( X = (y-1)/c. \)

According to Proposition \ref{Prop:piecewise_linear},  it suffices
to show that the efficiency of $\bx$ in the modified model,
$\gamma^0(\mathbf{x})$, is at least  $3/(3+\overline c)$.
 From the optimality conditions
(\ref{equa:optimality}) we have that $p^0(X^S)=1$, and therefore,
$$
X^S=X+(y-1)/d.
$$
The optimal social welfare achieved in the modified model is
\[
\int_0^{X^S} {p^0(q)\,dq}  - X^S = \dfrac{{{y^2-1}}}{{2d}}+
\dfrac{(2y+cX)X}{2}-X-\dfrac{y-1}{d}=\dfrac{{{{(y - 1)}^2}}}{{2d}} +
X^P(y - 1) + \dfrac{{c{(X)^2}}}{2}.
 \]
In the modified model, the social welfare associated with
$\mathbf{x}$ is
$$
\int_0^{X} {p^0(q)\,dq}  - X =
 X(y - 1) + \dfrac{{c{ {X}^2}}}{2}.
$$

Since \( c X = y-1 \), we have
$$
\gamma^0(\mathbf{x}) = \dfrac{3}{3+\overline c}.
$$

\hypertarget{C2}{}

\pdfbookmark[1]{Proof of Corollary
\ref{Coro:st_lower_bound_mono}}{C2}

\subsection*{C.2. Proof of Corollary
\ref{Coro:st_lower_bound_mono}}\label{sec:C2}

Note that the efficiency of a monopoly output equals the efficiency
of a Cournot equilibrium in a modified model with $N=1$. Therefore,
the desired result follows from the proof of Corollary
\ref{Coro:st_lower_bound}, except that the general lower bound
$f(\cdot)$ is replaced by the tighter one, $3/(3+\overline c)$,
provided by
 Theorem \ref{thm:mono}.

\end{APPENDICES}

\end{document}